\documentclass[10pt]{article}
\usepackage[a4paper, total={6in, 8in}]{geometry}     

\usepackage{graphicx}
\usepackage{multirow}
\usepackage{booktabs}
\usepackage{csvsimple}
\usepackage{rotating}
\usepackage{url}
\usepackage{enumerate}

\usepackage{amsmath}
\usepackage{amssymb}
\usepackage{amsthm}

\usepackage{authblk}
\usepackage{setspace}
\usepackage[title]{appendix}

\usepackage{tikz}
\usepackage{pgfplots}
\pgfplotsset{set layers}
\pgfplotsset{compat=1.14}
\usetikzlibrary{arrows}

\usepackage{natbib}

\newtheorem{definition}{Definition}
\newtheorem{lemma}{Lemma}
\newtheorem{theorem}{Theorem}
\newtheorem{corollary}{Corollary}

\newtheorem{example}{Example}

\title{On the stochastic inventory problem under order capacity constraints}
\author[1]{Roberto Rossi\thanks{Corresponding author; 29 Buccleuch place, EH8 9JS, Edinburgh, UK; email: roberto.rossi@ed.ac.uk}}
\author[2]{Zhen Chen}
\author[3]{S. Armagan Tarim}
\affil[1]{Business School, University of Edinburgh, UK}
\affil[2]{College of Economics and Management, Southwest University, China}
\affil[3]{Business School, University College Cork, Ireland}
\date{}                                           

\begin{document}
\maketitle

\onehalfspacing

\begin{abstract}
We consider the single-item single-stocking location stochastic inventory system under a fixed ordering cost component. A long-standing problem is that of determining the structure of the optimal control policy when this system is subject to order quantity capacity constraints; to date, only partial characterisations of the optimal policy have been discussed. An open question is whether a policy with a single continuous interval over which ordering is prescribed is optimal for this problem. Under the so-called ``continuous order property'' conjecture, we show that the optimal policy takes the modified multi-$(s,S)$ form. Moreover, we provide a numerical counterexample in which the continuous order property is violated, and hence  show that a modified multi-$(s,S)$ policy is not optimal in general. However, in an extensive computational study, we show that instances violating the continuous order property are extremely rare in practice, and that the plans generated by a modified multi-$(s,S)$ policy can therefore be considered, for all practical purposes, optimal. Finally, we show that a modified $(s,S)$ policy also performs well in practice.

{\bf Keywords: } inventory; stochastic lot sizing; order capacity; modified multi-$(s,S)$ policy.
\end{abstract}

\section{Introduction}

This study focuses on one of the fundamental models in inventory management \citep{Arrow1951,porteus2002foundations}: the periodic review single-item single-stocking location stochastic inventory system under nonstationary demand, complete backorders, and a fixed ordering cost component.
By introducing the concept of $K$-convexity, \cite{Scarf1960} proved, under mild assumptions, that the optimal control policy takes the well-known ($s,S$) form: if the inventory level falls below the reorder point $s$, one should place an order and raise inventory up to level $S$; otherwise, one should not order. 
Compared to the case investigated by \citeauthor{Scarf1960}, in which the order quantity is unconstrained, the capacitated version of the stochastic inventory problem is inherently harder, both structurally and computationally. This work is concerned with this variant of the problem.

If the fixed ordering cost is absent, but ordering capacity constraints are enforced, a so-called {\em modified base stock policy} is optimal for both the finite and infinite horizon cases \citep{Federgruen1986a,Federgruen1986b}. 
While in a classical base stock policy one simply orders up to $S$, in a modified base stock policy, when the inventory level falls below $S$, one should order up to $S$, or as close to $S$ as possible, given the ordering capacity. The classical base stock policy is thus ``modified'' to embed order saturation.

In the presence of a positive fixed ordering cost, \cite{Wijngaard1972} was the first to investigate the influence of capacity constraints on the structure of the optimal control policy. In analogy to the aforementioned modified base stock policy, \citeauthor{Wijngaard1972} conjectured that an optimal strategy may feature a so-called {\em modified} $(s,S)$ {\em structure}: if the inventory level is greater or equal to $s$, do not order; otherwise, order up to $S$, or as close to $S$ as possible, given the ordering capacity. 
Unfortunately, both \cite{Wijngaard1972} and \cite{Chen1996X} provided counterexamples that ruled out the optimality of a modified $(s,S)$ policy. However, \cite{Chen1996X} proved that, under stationary demand and a finite horizon, the optimal policy features a so-called $X-Y$ band structure: when initial inventory level is below $X$, it is optimal to order at full capacity; when initial inventory level is above $Y$, it is optimal to not order. 
\cite{Gallego2000Capacitated} introduced $CK$-convexity, a generalisation of Scarf's $K$-convexity; by leveraging this property, they extended the analysis in \citep{Chen1996X} and further characterized the optimal policy by identifying four regions: in two of these regions the optimal policy is completely specified, while it is only partially specified in other two regions. 
\cite{Chan2003A} discussed further properties of the optimal order policy when the inventory level falls within \citeauthor{Chen1996X}'s $X-Y$ band, and devised an efficient algorithm to compute optimal policy parameters. 
\cite{Chen2004The} extended the analysis in \citep{Chen1996X} and proved that the optimal policy continues to exhibit the $X-Y$ band structure under infinite horizon; moreover, \citeauthor{Chen2004The} proved that the $X-Y$ band width is no more than the capacity. 
\cite{Gallego2004} investigated the case in which the fixed ordering cost is large relative to the variable cost of a full order; this assumption allowed them to restrict their analysis to full-capacity orders; under this setting they showed that the optimal policy is a threshold policy: if the inventory level falls below the threshold $s$, issue a full-capacity order; otherwise, do not order. 
Finally, \cite{Shi2014Approximation} developed an approximation algorithm with worst-case performance guarantee. 

As mentioned in \citep{Shi2014Approximation}, when order quantity capacity constraints are enforced, {\em only some partial characterization of the structure of the optimal control policy is available} in the literature. To the best of our knowledge, the problem of determining the structure of the optimal policy of the capacitated stochastic inventory problem remains open. A long-standing open question in the literature, originally posed by \cite{Gallego2000Capacitated}, is whether a policy with a single continuous interval over which ordering is prescribed is optimal for this problem. This is the so-called ``continuous order property'' conjecture, which was later also investigated by \cite{Chan2003A}. To the best of our knowledge, to date this conjecture has never been confirmed or disproved. This gap motivates the present study. 

We make the following contributions to the literature on stochastic inventory control.
\begin{itemize}
\item In light of the results presented in \citep{Chen2004The}, we show how to simplify the optimal policy structure presented by \cite{Gallego2000Capacitated}. Moreover, we extend the discussion in \citep{Gallego2000Capacitated} and provide a full characterisation of the optimal policy for instances for which the continuous order property holds. In particular, we show that the optimal policy takes the {\em modified multi-}$(s,S)$ {\em form}. 
\item We provide a numerical counterexample in which the continuous order property is violated. This closes a fundamental and long standing question in the literature: a policy with a single continuous interval over which ordering is prescribed is not optimal in general. Since generating similar counterexamples is far from trivial, in our Appendix we illustrate the analytical insights we relied upon to generate such rare instances.
\item In an extensive computational study, we show that instances violating the continuous order property are extremely rare in practice, and that a modified multi-$(s,S)$ ordering policy can therefore be considered, for all practical purposes, optimal. Moreover, the number of reorder-point/order-up-to-level pairs that this policy features in each period is generally very low --- i.e. less or equal to 6 in our study --- this means that operating the policy in practice will not result too cumbersome for a manager. Finally, we show that a well-known heuristic policy which has been known for decades, the modified ($s,S$) policy \citep{Wijngaard1972}, also performs well in practice.
\end{itemize}

The rest of this paper is organised as follows. 
In Section \ref{sec:stochastic_lot_sizing}, we introduce the well-known stochastic inventory problem as originally discussed in \cite{Scarf1960}. 
In Section \ref{sec:capacitated_stochastic_lot_sizing}, we extend the problem description to accommodate order quantity capacity constraints. 
In Section \ref{sec:known_properties} we summarise known properties of the optimal policy from the literature. 
In Section \ref{sec:modified_multi_ss} we introduce the so-called ``continuous order property,'' which has been previously conjectured in the literature, and illustrate the structure that the optimal policy would take if this property were to hold.
In Section \ref{sec:counterexample} we present a numerical counterexample in which the continuous order property is violated.
In Section \ref{sec:computational_study} we illustrate results of our extensive computational study aimed at showing that instances violating the continuous order property are extremely rare in practice, that a modified multi-$(s,S)$ ordering policy is optimal from a practical standpoint, and that a modified $(s,S)$ ordering policy is near-optimal.
In Section \ref{sec:conclusions} we draw conclusions. 

\section{Preliminaries on the ($s,S$) policy}\label{sec:stochastic_lot_sizing}

The rest of this work is concerned with a single-item single-stocking point inventory control problem. A finite planning horizon of $n$ discrete time periods, which are labelled in reverse order for convenience, is assumed. Period demands are stochastic, $d_t$ in period $t$, with known probability density and cumulative distribution functions $f_t$ and $F_t$, respectively. The cost components that are taken into account include: the ordering cost $c(x)$ for placing an order for $x$ units; the inventory holding cost $h$ for any excess unit of stock carried over to next period; and the shortage cost $p$ that is incurred for each unit of unmet demand in any given period. Unmet demand is backordered. Without loss of generality, it is assumed that there is no lead-time and deliveries are instantaneous.

Let $x$ represent the pre-order inventory level, and $\widehat{C}_n(x)$ denote the minimum expected total cost achieved by employing an optimal replenishment policy over the planning horizon $n,\ldots,1$; then one can write 
\[
\widehat{C}_n(x)\triangleq\min_{x\leq y}\left\{c(y-x)+L_n(y)+\int_0^\infty \widehat{C}_{n-1}(y-\xi)f_n(\xi) \mathrm{d}\xi \right\},
\]
where $\widehat{C}_0\triangleq0$ and
$L_n(y)\triangleq\int_0^y h(y-\xi)f_n(\xi) \mathrm{d}\xi + \int_y^\infty p(\xi-y)f_n(\xi) \mathrm{d}\xi$.

Following \cite{Scarf1960}, it is assumed that the ordering cost takes the form
\[
c(x)\triangleq\left\{
\begin{array}{lr}
0			&\quad x=0,\\
K+vx			& \quad x>0.
\end{array}
\right.
\]

For convex $L_n(y)$, \cite{Scarf1960} proved that the optimal policy takes the $(s,S)$ form, and thus features two policy control parameters: $s$ and $S$. In the $(s,S)$ policy, an order of size $S-x$ is placed if and only if the pre-order inventory level is $x<s$. 

More specifically, \cite{Scarf1960} introduced the concept of $K$-convexity (Definition \ref{def:k_convex}).
\begin{definition}[$K$-convexity]\label{def:k_convex}
Let $K\geq 0$, $g(x)$ is $K$-convex if for all $x$, $a>0$, and $b>0$,
\[(K+g(x+a)-g(x))/a\geq (g(x)-g(x-b))/b;\]
\end{definition}
and proved that $\widehat{G}_n(y)$ is $K$-convex, where
\[
\widehat{G}_n(y)\triangleq vy+L_n(y)+\int_0^\infty \widehat{C}_{n-1}(y-\xi)f_n(\xi) \mathrm{d}\xi.
\]
This observation implies that the $(s,S)$ policy is optimal, and the policy parameters $s$ and $S$ satisfy
$\widehat{G}_n(s)=\widehat{G}_n(S)+K$.
Note that when the order quantity is not subject to capacity constraints, $S$ incidentally coincides with the global minimizer of $\widehat{G}_n(y)$. In what follows, we will see that this may not be the case when a capacity constraint is enforced on the order quantity.

\section{Capacitated ordering}\label{sec:capacitated_stochastic_lot_sizing}

The stochastic inventory problem investigated in \citep{Scarf1960} assumes that order quantity $Q$ in each period can be as large as needed. In practice, one may want to impose the restriction that $0\leq Q \leq B$, where $B$ is a positive value denoting the maximum order quantity in each period. 

We generalise $\widehat{C}_n(x)$ and $\widehat{G}_n(x)$  to reflect capacity restrictions
\begin{equation}\label{eq:Cn}
C_n(x)\triangleq \min_{x\leq y \leq x+B}\left\{c(y-x)+L_n(y)+\int_0^\infty C_{n-1}(y-\xi)f_n(\xi) \mathrm{d}\xi \right\};
\end{equation}
\begin{equation}\label{eq:Gn}
G_n(y)\triangleq vy+L_n(y)+\int_0^\infty C_{n-1}(y-\xi)f_n(\xi) \mathrm{d}\xi.
\end{equation}
Finally, we present a useful result that will be used in the coming sections.
\begin{lemma}\label{lemma:limiting_behaviour}
$G_n(x)$ is coercive.
\end{lemma}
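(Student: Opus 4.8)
The plan is to establish coercivity, i.e.\ $G_n(x)\to+\infty$ as $|x|\to\infty$, by bounding $G_n$ from below by an explicit convex function whose tail behaviour is transparent. First I would note that every cost term appearing in the recursion is nonnegative: $c(\cdot)\ge 0$ by definition, and $L_m\ge 0$ because $h,p\ge 0$. Hence a one-line induction on $m$, started from $C_0\equiv 0$ in \eqref{eq:Cn}, shows $C_m(x)\ge 0$ for all $m\ge 0$ and all $x$. Feeding this into \eqref{eq:Gn} yields the pointwise bound $G_n(y)\ge vy+L_n(y)$, so it suffices to prove that the right-hand side, call it $\phi(y)$, is coercive.

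Next I would analyse $\phi$, which is convex since it is the sum of a linear term and the convex loss function $L_n$ (the expected one-period holding and shortage cost); for a convex function, coercivity is dictated entirely by its asymptotic slopes. Differentiating $L_n$ gives $\phi'(y)=v+hF_n(y)-p\bigl(1-F_n(y)\bigr)$, so $\phi'(y)\to v+h$ as $y\to+\infty$ and $\phi'(y)\to v-p$ as $y\to-\infty$, where I use $F_n(y)\to 1$ and $F_n(y)\to 0$ respectively, together with finiteness of $\mathbb{E}[d_n]$ to guarantee that $L_n$ is finite and these limits exist. Since $v+h>0$, we get $\phi(y)\to+\infty$ as $y\to+\infty$; and provided $v-p<0$, i.e.\ $p>v$, the slope is eventually negative in the left tail, so $\phi(y)\to+\infty$ as $y\to-\infty$ as well. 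Because $G_n\ge\phi$ everywhere, $G_n$ inherits coercivity.

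The right tail ($y\to+\infty$) is essentially free, as there both $vy$ and the holding part of $L_n$ drive the cost up; the delicate regime is the left tail, where the purchasing term $vy\to-\infty$ competes against the backorder penalty. The crux is therefore the standing assumption $p>v$ inherited from Scarf's mild assumptions, which guarantees that penalties outweigh the avoided purchasing cost in the limit; without it, even the one-period function $G_1(y)=vy+L_1(y)$ fails to be coercive, so the hypothesis cannot be dropped. If one prefers not to lean on the crude bound $C_{n-1}\ge 0$, an alternative is an inductive argument: once $G_{n-1}$ is known to be coercive one can show $C_{n-1}(x)\to+\infty$ as $x\to-\infty$, so the convolution term in \eqref{eq:Gn} itself reinforces the left tail; but the base case $n=1$ still requires $p>v$, so this refinement buys nothing essential. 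The only routine care needed is the degenerate case $v=0$, where $\phi=L_n$ is coercive outright.
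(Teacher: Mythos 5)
Your proof is correct (granting $p>v$) but follows a genuinely different route from the paper's. The paper works with $G_n$ itself: it asserts the asymptotic slopes $\lim_{x\to\infty}G'_n(x)=nh$ and $\lim_{x\to-\infty}G'_n(x)=-np$ --- i.e.\ the holding and penalty rates accumulated over all $n$ remaining periods, obtained by unwinding the recursion for $C_{n-1}$ in the two tails --- and then invokes the fundamental theorem of calculus. You instead discard the recursive term entirely via the crude bound $C_{n-1}\geq 0$, reducing the claim to coercivity of the single-period convex function $\phi(y)=vy+L_n(y)$ with tail slopes $v+h$ and $v-p$. Your argument is more elementary and more readily made rigorous, since it never has to characterise the derivative of the recursively defined $C_{n-1}$; the price is that the left tail then needs $p>v$, whereas the paper's multi-period slope would only need the weaker $np>v$ (note that the paper's stated limit $-np$ silently drops the $+v$ contributed by the explicit $vy$ term in \eqref{eq:Gn}, so strictly it is $v-np$ and some such condition is unavoidable there too --- the lemma is simply false if $v$ is large relative to $np$). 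You are right to flag this hypothesis explicitly; the paper leaves it implicit, so your treatment is, if anything, more honest about what the lemma requires, at the cost of a marginally stronger assumption when $n\geq 2$.
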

\begin{proof}
The limiting behaviour of $G_n(x)$ can be characterized as $\lim_{x\rightarrow \infty}  G'_n(x) = nh$ and 
$\lim_{x\rightarrow -\infty} G'_n(x) = -np$, 
and from the fundamental theorem of calculus it follows that $G_n(\infty)=G_n(-\infty)=\infty$.
\end{proof}

\section{Review of known properties of the optimal policy}\label{sec:known_properties}

We next introduce\footnote{This was originally called strong $CK$-convexity in \citep{Gallego2000Capacitated}; however, in line with \cite{Scarf1960}, in the present work we used letter $C$ to denote the cost function and we adopted letter $B$ for the ordering capacity, hence the concept has been renamed $(K,B)$-convexity.} ``$(K,B)$-convexity (i)'' for a function $g$ \citep{Gallego2000Capacitated}.
\begin{definition}\label{def:KB_convex_i}
Let $K\geq 0$, $B\geq 0$, $g$ is $(K,B)$-convex (i) if it satisfies
\[(K+g(x+a)-g(x))/a\geq (g(y)-g(y-b))/b\]
for $0<a\leq B$, $0<b\leq B$, and $y\leq x$.
\end{definition}
\begin{example}\label{sec:numerical_example}
Consider a planning horizon of $n=4$ periods, and a demand $d_t$ distributed in each period $t=1,\ldots,n$ according to a Poisson law with rate $\lambda_t\in\{20, 40, 60, 40\}$. Other problem parameters are $K=100$, $h=1$ and $p=10$; to better conceptualise the example we let $v=0$. 
In Fig. \ref{fig:KB_convexity_i} we plot $G_n(y)$ and illustrate the concept of $(K,B)$-convexity (i) for the case in which $B=65$. 
\end{example}
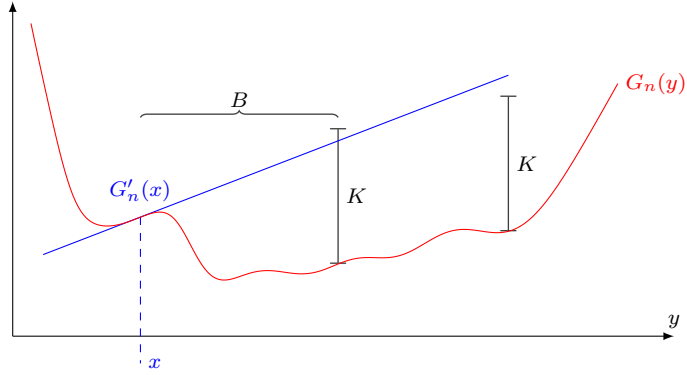
\begin{figure}
\centering
\begin{tikzpicture}[x=0.04cm, y=0.018cm]
\footnotesize
\draw [-latex] ([xshift=-0mm] 0.0,0) -- ([xshift=3mm] 210.0,0) node[above] {$y$};
\draw [-latex] ([yshift=-0mm] 0,0.0) -- ([yshift=3mm] 0, 230.0);


\draw[|-|,color=darkgray,line width=0.5pt]
  (107,153.2702526094082) -- (107,53.2702526094082) node[midway,black,right=0pt] {$K$};

\draw[|-|,color=darkgray,line width=0.5pt]
  (163,177.21922694362269) -- (163,77.21922694362269) node[midway,black,right=0pt] {$K$};

\draw[color=darkgray,decoration={brace,raise=4pt},decorate,line width=0.5pt]
  (42,153.2702526094082) -- (107,153.2702526094082) node[midway,black,above=5pt] {$B$};
  
\draw [color=blue, mark= , style=dashed] (42,87.72015201566262) node[above = 3pt] {$G'_n(x)$} -- (42,-20) node[right] {$x$};
\draw [color=blue, mark= , style=solid] (10.00, 60.08752987723638) -- (163,192.20600447658684); 

\draw [smooth, color=red, mark= , style=solid] plot coordinates {%
(6, 480.3525444206225-250)
(7, 469.75598324241037-250)
(8, 459.3240096288101-250)
(9, 449.01568691325707-250)
(10, 438.8041602405519-250)
(11, 428.68152457586-250)
(12, 418.6644076922001-250)
(13, 408.7987852450556-250)
(14, 399.1619544271999-250)
(15, 389.8597124714496-250)
(16, 381.017775549053-250)
(17, 372.768088220551-250)
(18, 365.23233430376825-250)
(19, 358.50601169858106-250)
(20, 352.64645366403465-250)
(21, 347.6671653736948-250)
(22, 343.53919316784004-250)
(23, 340.19854663951196-250)
(24, 337.557478789343-250)
(25, 335.5169733477567-250)
(26, 333.97807393615176-250)
(27, 332.8504254251565-250)
(28, 332.057388695932-250)
(29, 331.53786472419876-250)
(30, 331.2455201090355-250)
(31, 331.14634909007-250)
(32, 331.2154930425052-250)
(33, 331.43406181531134-250)
(34, 331.7864509284346-250)
(35, 332.2584026013554-250)
(36, 332.83580633961583-250)
(37, 333.5040540206645-250)
(38, 334.2478503260832-250)
(39, 335.0511195815081-250)
(40, 335.89625427275814-250)
(41, 336.76535427753254-250)
(42, 337.6386781274589-250)
(43, 338.4939783671208-250)
(44, 339.3054286535175-250)
(45, 340.0420005041289-250)
(46, 340.66526860009515-250)
(47, 341.1269818232568-250)
(48, 341.36725851261565-250)
(49, 341.3147003855667-250)
(50, 340.88974599908585-250)
(51, 340.0119900488247-250)
(52, 338.611047532022-250)
(53, 336.6392173353053-250)
(54, 334.08323044401476-250)
(55, 330.9721898541786-250)
(56, 327.37954856214026-250)
(57, 323.41838803034415-250)
(58, 319.23085653618375-250)
(59, 314.97388134391207-250)
(60, 310.803886821463-250)
(61, 306.8630656940294-250)
(62, 303.26904170171633-250)
(63, 300.10878638117407-250)
(64, 297.436726859719-250)
(65, 295.27631278596124-250)
(66, 293.623975616665-250)
(67, 292.45437920941106-250)
(68, 291.7260653724512-250)
(69, 291.3866958259477-250)
(70, 291.3776992183416-250)
(71, 291.63804946489637-250)
(72, 292.10718201486975-250)
(73, 292.7271506781469-250)
(74, 293.4441560207945-250)
(75, 294.2098042186789-250)
(76, 294.9815059513597-250)
(77, 295.72246720943934-250)
(78, 296.4024410599451-250)
(79, 296.997698595793-250)
(80, 297.49090691925034-250)
(81, 297.870920645062-250)
(82, 298.13249013986683-250)
(83, 298.275888081496-250)
(84, 298.30645590298934-250)
(85, 298.23407304183456-250)
(86, 298.07255426504594-250)
(87, 297.83898323707405-250)
(88, 297.552993487089-250)
(89, 297.2360106109957-250)
(90, 296.91047160070946-250)
(91, 296.59903844165456-250)
(92, 296.32382351166495-250)
(93, 296.1056439421127-250)
(94, 295.963321178641-250)
(95, 295.91304080555005-250)
(96, 295.96778661218883-250)
(97, 296.1368621881501-250)
(98, 296.4255132336748-250)
(99, 296.8346642718011-250)
(100, 297.360784310719-250)
(101, 297.9958967209416-250)
(102, 298.72774842432995-250)
(103, 299.5401515802864-250)
(104, 300.4135064864436-250)
(105, 301.32550683100277-250)
(106, 302.25201767030165-250)
(107, 303.1681028869593-250)
(108, 304.04916409145505-250)
(109, 304.87213785992265-250)
(110, 305.61668567321834-250)
(111, 306.26630266031367-250)
(112, 306.8092690981413-250)
(113, 307.23937366949434-250)
(114, 307.55634989526493-250)
(115, 307.76598609671214-250)
(116, 307.8798929242094-250)
(117, 307.91493843368414-250)
(118, 307.8923860860348-250)
(119, 307.83679314665795-250)
(120, 307.77474349309165-250)
(121, 307.73349828382834-250)
(122, 307.73964971626515-250)
(123, 307.8178575668899-250)
(124, 307.98973653918966-250)
(125, 308.2729464054667-250)
(126, 308.6805185752897-250)
(127, 309.2204340672018-250)
(128, 309.89545073987375-250)
(129, 310.7031633348688-250)
(130, 311.6362692215891-250)
(131, 312.6830059660491-250)
(132, 313.8277237527019-250)
(133, 315.0515557046083-250)
(134, 316.3331514968708-250)
(135, 317.6494435022018-250)
(136, 318.97641926079274-250)
(137, 320.2898786801422-250)
(138, 321.5661585732077-250)
(139, 322.78281065157273-250)
(140, 323.9192217948307-250)
(141, 324.9571673563599-250)
(142, 325.8812895909029-250)
(143, 326.67949422682335-250)
(144, 327.34325901255886-250)
(145, 327.86784898765313-250)
(146, 328.2524344591506-250)
(147, 328.5001093233423-250)
(148, 328.61780949209543-250)
(149, 328.6161337087141-250)
(150, 328.5090718464008-250)
(151, 328.3136487034839-250)
(152, 328.04949414286364-250)
(153, 327.7383529766957-250)
(154, 327.4035500907345-250)
(155, 327.0694277895308-250)
(156, 326.7607731223718-250)
(157, 326.50225296797277-250)
(158, 326.31787391250947-250)
(159, 326.2304824988906-250)
(160, 326.26131934710395-250)
(161, 326.4296380699043-250)
(162, 326.75239699355745-250)
(163, 327.24402863171076-250)
(164, 327.91628865169105-250)
(165, 328.77818319723093-250)
(166, 329.83597068748384-250)
(167, 331.093231890589-250)
(168, 332.5510002055803-250)
(169, 334.207942724825-250)
(170, 336.06058180447627-250)
(171, 338.10354652765193-250)
(172, 340.3298435645582-250)
(173, 342.73113745661396-250)
(174, 345.29803120534774-250)
(175, 348.0203391514997-250)
(176, 350.8873454034014-250)
(177, 353.88804243727304-250)
(178, 357.01134587334553-250)
(179, 360.24628276824916-250)
(180, 363.582152005108-250)
(181, 367.00865647001586-250)
(182, 370.5160076512769-250)
(183, 374.095004072049-250)
(184, 377.73708556432763-250)
(185, 381.4343658178782-250)
(186, 385.1796459040432-250)
(187, 388.9664115986113-250)
(188, 392.78881733069113-250)
(189, 396.6416594878788-250)
(190, 400.52034163429386-250)
(191, 404.4208339686314-250)
(192, 408.33962908388514-250)
(193, 412.2736958060818-250)
(194, 416.2204326008385-250)
(195, 420.17762175561023-250)
(196, 424.143385281167-250)
(197, 428.11614323458696-250)
(198, 432.0945749520266-250)
(199, 436.07758349482-250)} 
 node[right] {$G_n(y)$};
  
\end{tikzpicture}
\caption{$(K,B)$-convexity (i) in the context of Example \ref{sec:numerical_example}, when $B=65$. For the sake of illustration, we assume $G_n$ differentiable, set $x=y$, and let $b\rightarrow 0$, obtaining $K+G_n(x+a)-G_n(x)-aG_n'(x)\geq 0$ for $0<a\leq B$.}
\label{fig:KB_convexity_i}
\end{figure} 

\begin{lemma}\label{lemma:decreasing}
If $G_n$ (resp. $C_n$) is $(K,B)$-convex (i) and it is optimal to place an order at $x_0$, then $G_n(y)$ (resp. $C_n(y)$) is nonincreasing for $y\leq x_0$.
\end{lemma}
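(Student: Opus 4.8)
The plan is to treat the claim as a purely analytic property of any $(K,B)$-convex (i) function $g$, and then to apply it with $g=G_n$ (resp. $g=C_n$). The first step is to make precise what ``it is optimal to place an order at $x_0$'' buys us. Recalling that, modulo the linear term $vx$, the decision at $x_0$ compares the no-order value $G_n(x_0)$ against $K+\min_{x_0<y\le x_0+B}G_n(y)$, optimality of ordering means there is an order-up-to level $y_0\in(x_0,x_0+B]$ with $K+G_n(y_0)\le G_n(x_0)$. Writing $a\triangleq y_0-x_0\in(0,B]$, this is exactly the statement that $K+G_n(x_0+a)-G_n(x_0)\le 0$, and hence $(K+G_n(x_0+a)-G_n(x_0))/a\le 0$. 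For the $C_n$ variant the same reading applies to whichever ordering decision makes $C_n$ the relevant value function.

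The second step uses this to switch off the left-hand side of the $(K,B)$-convexity (i) inequality. Fixing $x=x_0$ and the particular $a$ just identified, Definition \ref{def:KB_convex_i} gives, for every $y\le x_0$ and every $b\in(0,B]$,
\[
0\ \ge\ \frac{K+G_n(x_0+a)-G_n(x_0)}{a}\ \ge\ \frac{G_n(y)-G_n(y-b)}{b},
\]
so that $G_n(y-b)\ge G_n(y)$. In words, moving left by any amount up to $B$ from any point $y\le x_0$ cannot decrease the function: $G_n$ is nonincreasing on every window of length at most $B$ lying below $x_0$.

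The third step promotes this local monotonicity to monotonicity on the whole ray $(-\infty,x_0]$ by a telescoping argument. Given $y_1<y_2\le x_0$, I would insert intermediate points $y_1=z_0<z_1<\cdots<z_m=y_2$ with each gap $z_i-z_{i-1}\le B$; since every $z_i\le y_2\le x_0$, the window estimate of the previous step applies at each $z_i$, yielding $G_n(z_{i-1})\ge G_n(z_i)$, and chaining these inequalities gives $G_n(y_1)\ge G_n(y_2)$. The identical argument with $C_n$ in place of $G_n$ settles the parenthetical case.

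I expect the only genuinely delicate points to be the bookkeeping in the translation of optimality --- ensuring the minimizing $y_0$ exists and lies in $(x_0,x_0+B]$, for which coercivity from Lemma \ref{lemma:limiting_behaviour} together with continuity suffices --- and the observation that a \emph{single} fixed choice of $a$ on the left-hand side is admissible simultaneously for all $y\le x_0$. This last point is what makes the window estimate, and therefore the telescoping, go through without any circularity.
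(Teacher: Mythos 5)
Your proposal is correct and follows essentially the same route as the paper's proof: optimality of ordering at $x_0$ gives $(K+G_n(x_0+a)-G_n(x_0))/a\le 0$ for the optimal order quantity $a$, and the $(K,B)$-convexity (i) inequality then forces $(G_n(y)-G_n(y-b))/b\le 0$ for all $y\le x_0$ and $0<b\le B$. The only difference is that you spell out the telescoping step that upgrades the length-$B$ window estimate to monotonicity on the whole ray, which the paper leaves implicit.
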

\begin{proof}
Since $G_n$ is $(K,B)$-convex (i), if it is optimal to place an order at $x_0$, say an order of $a$ units, then $0\geq(K+G_n(x_0+a)-G_n(x_0))/a$, and $G_n$ is nonincreasing for $y\leq x_0$, since 
$0\geq(K+G_n(x_0+a)-G_n(x_0))/a\geq (G_n(y)-G_n(y-b))/b$, 
for $y\leq x_0$ and $0<b\leq B$. The proof for $C_n$ is identical.
\end{proof}

\begin{lemma}\label{lemma:s_m}
If $G_n$ is $(K,B)$-convex (i), there exists a pair of values $S_m$ and $s_m$ such that
$s_m\triangleq\sup\{x|C_n(x)=G_n(x)-vx\}$ 
is the maximum inventory level at which it is optimal to place an order, 
and $S_m\triangleq s_m+a$, where $0<a\leq B$ is the order quantity at $s_m$.
\end{lemma}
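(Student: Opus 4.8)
The plan is to recast the capacitated recursion \eqref{eq:Cn} entirely in terms of $G_n$, and then to combine the coercivity of Lemma~\ref{lemma:limiting_behaviour} with the monotonicity furnished by Lemma~\ref{lemma:decreasing} to locate the supremum of the ordering region and to show that it is attained. \emph{Step 1 (reformulation).} Substituting $L_n(y)+\int_0^\infty C_{n-1}(y-\xi)f_n(\xi)\,\mathrm{d}\xi=G_n(y)-vy$ into \eqref{eq:Cn} and splitting the minimisation according to whether $y=x$ (no order) or $y>x$ (order), I would write $C_n(x)=-vx+\min\{G_n(x),\,K+\min_{x<y\le x+B}G_n(y)\}$. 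Hence $C_n(x)=G_n(x)-vx$ precisely when not ordering is optimal, so it is optimal to place a positive order at $x$ exactly when some $y\in(x,x+B]$ satisfies $G_n(y)\le G_n(x)-K$; let $\mathcal{O}$ denote this set of inventory levels. The lemma then amounts to showing that $\mathcal{O}$ is nonempty, bounded above, and contains its supremum $s_m$, with the order prescribed there being strictly positive.

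\emph{Step 2 (boundedness above).} I would argue by contradiction. If $\mathcal{O}$ were unbounded above, there would be arbitrarily large $x_0\in\mathcal{O}$; by Lemma~\ref{lemma:decreasing} each such $x_0$ forces $G_n$ to be nonincreasing on $(-\infty,x_0]$, and letting $x_0\to\infty$ makes $G_n$ nonincreasing on all of $\mathbb{R}$, contradicting $G_n(\infty)=\infty$ from Lemma~\ref{lemma:limiting_behaviour}. Thus $s_m\triangleq\sup\mathcal{O}<\infty$. Nonemptiness I would take as the standing nondegeneracy assumption that ordering is optimal for at least one inventory level; this holds, for instance, whenever $np B>K$, since coercivity then makes $G_n$ drop by more than $K$ over a window of width $B$ far to the left (where $G_n'\to -np$).

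\emph{Step 3 (supremum attained, order strictly positive).} I would pick a sequence $x_k\uparrow s_m$ with $x_k\in\mathcal{O}$ and, for each $k$, choose $y_k\in(x_k,x_k+B]$ attaining $\min_{x_k<y\le x_k+B}G_n(y)\le G_n(x_k)-K$; the minimiser cannot be the left endpoint, since that would force $K\le 0$. The $y_k$ all lie in the bounded set $(x_1,s_m+B]$, so a subsequence converges to some $y^\ast\in[s_m,s_m+B]$. Passing to the limit using continuity of $G_n$ yields $G_n(y^\ast)\le G_n(s_m)-K$, and because $K>0$ this forces $y^\ast>s_m$. Hence ordering up to $y^\ast$ is optimal at $s_m$, so $s_m\in\mathcal{O}$ and is in fact the \emph{maximum} of $\mathcal{O}$; setting $a\triangleq y^\ast-s_m\in(0,B]$ and $S_m\triangleq s_m+a$ finishes the argument.

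The main obstacle is Step~3: boundedness and the positivity of $a$ are short, but showing that the supremum is genuinely achieved requires the compactness-plus-continuity passage to the limit together with careful handling of the half-open ordering window $(x,x+B]$, so as to guarantee that the limiting order quantity stays strictly positive rather than collapsing to zero. The supporting fact that makes this limiting argument legitimate is the continuity of $G_n$ (equivalently of $C_{n-1}$), which I would establish by induction on the horizon from the continuity of $L_n$ and of the convolution term.
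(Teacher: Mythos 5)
Your proposal is correct and rests on exactly the same two pillars as the paper's own (very terse) proof: Lemma~\ref{lemma:decreasing} forces $G_n$ to be nonincreasing to the left of any ordering point, so an unbounded ordering region would contradict the coercivity of Lemma~\ref{lemma:limiting_behaviour}. Your Steps~1 and~3 (the reformulation via $C_n(x)=-vx+\min\{G_n(x),K+\min_{x<y\le x+B}G_n(y)\}$, the nonemptiness caveat, and the compactness-plus-continuity argument that the supremum is attained with a strictly positive order quantity) supply details the paper leaves implicit rather than departing from its approach.
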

\begin{proof}
Let $x_0$ be any point at which it is optimal to order, say, $a$ units, $0< a \leq B$. 
$G_n(y)$ is a nonincreasing function for $y\leq x_0$ (Lemma \ref{lemma:decreasing}).
This result implies that there must exist an upper bound on inventory level beyond which no ordering is optimal. 
Otherwise $G_n(y)$ would be a nonincreasing function for all $y$, which contradicts Lemma \ref{lemma:limiting_behaviour}. 
\end{proof}

\begin{definition}\label{def:KB_convex_ii}
Let $K\geq 0$, $B\geq 0$, $g$ is $(K,B)$-convex (ii) if it satisfies
\[(K+g(x+a)-g(x))/a\geq (K+g(y)-g(y-B))/B\]
for $0<a\leq B$ and $y\leq x$.
\end{definition}

\begin{example}
In Fig. \ref{fig:KB_convexity_ii} we plot $C_n(y)$ and illustrate the concept of $(K,B)$-convexity (ii) for our numerical example. 
\end{example}
\begin{figure}
\centering
\begin{tikzpicture}[x=0.0380952380952381cm, y=0.018cm]
\footnotesize
\draw [-latex] ([xshift=-0mm] 0.0,0) -- ([xshift=3mm] 210.0,0) node[above] {$y$};
\draw [-latex] ([yshift=-0mm] 0,0.0) -- ([yshift=3mm] 0, 270.0);

\draw [color=blue, mark= , style=solid] (60,61.363152308511985+100) -- (-5,410.8030113900325-250) node[midway, above] {$Z$};
\draw [color=blue, mark= , style=dashed] (-5,410.8030113900325-250) -- (-5,-20)  node[below] {$y-B$};
\draw [color=blue, mark= , style=dashed] (60,61.363152308511985+100) -- (60,-20) node[below] {$x$};
\draw[color=darkgray,decoration={brace,raise=4pt},decorate,line width=0.5pt]
  (60,61.363152308511985+100) -- (60,61.363152308511985) node[midway,black,right=5pt] {$K$};

\draw [color=blue, mark= , style=solid] (60,61.363152308511985) -- (110,55.72807330424445+100) node[midway, above] {$X$};
\draw [color=blue, mark= , style=dashed] (125,0) -- (125,-20) node[below, right] {$x+B$};
\draw [color=blue, mark= , style=dashed] (110,55.72807330424445+100) -- (110,-20) node[below] {$x+a$};
\draw[color=darkgray,decoration={brace,raise=4pt},decorate,line width=0.5pt]
(60,0) -- (125,0) node[midway,black,above=5pt] {$B$};
\draw[color=darkgray,decoration={brace,raise=4pt},decorate,line width=0.5pt]
  (110,55.72807330424445+100) -- (110,55.72807330424445) node[midway,black,right=5pt] {$K$};

\draw [smooth, color=red, mark= , style=solid] plot coordinates {%
(-50, 489.85883704001924-250)
(-49, 481.0169001176226-250)
(-48, 472.76721278912066-250)
(-47, 465.231458872338-250)
(-46, 458.50513626715053-250)
(-45, 452.6455782326042-250)
(-44, 447.6662899422644-250)
(-43, 443.5383177364097-250)
(-42, 440.19767120808143-250)
(-41, 437.55660335791254-250)
(-40, 435.5160979163262-250)
(-39, 433.9771985047214-250)
(-38, 432.8495499937262-250)
(-37, 432.05651326450163-250)
(-36, 431.5369892927682-250)
(-35, 431.24464467760527-250)
(-34, 431.1454736586394-250)
(-33, 431.1454736586394-250)
(-32, 431.1454736586394-250)
(-31, 431.1454736586394-250)
(-30, 431.1454736586394-250)
(-29, 431.1454736586394-250)
(-28, 431.1454736586394-250)
(-27, 431.1454736586394-250)
(-26, 431.1454736586394-250)
(-25, 431.1454736586394-250)
(-24, 431.1454736586394-250)
(-23, 431.1454736586394-250)
(-22, 431.1454736586394-250)
(-21, 431.1454736586394-250)
(-20, 431.1454736586394-250)
(-19, 431.1454736586394-250)
(-18, 431.1454736586394-250)
(-17, 431.1454736586394-250)
(-16, 431.1454736586394-250)
(-15, 431.1454736586394-250)
(-14, 431.1454736586394-250)
(-13, 431.1454736586394-250)
(-12, 431.1454736586394-250)
(-11, 431.1454736586394-250)
(-10, 430.97131442274815-250)
(-9, 427.37867313070984-250)
(-8, 423.41751259891373-250)
(-7, 419.22998110475334-250)
(-6, 414.9730059124816-250)
(-5, 410.8030113900325-250)
(-4, 406.8621902625991-250)
(-3, 403.2681662702858-250)
(-2, 400.1079109497436-250)
(-1, 397.4358514282887-250)
(0, 395.2754373545309-250)
(1, 393.62310018523453-250)
(2, 392.4535037779807-250)
(3, 391.72518994102103-250)
(4, 391.38582039451734-250)
(5, 391.37682378691125-250)
(6, 391.37682378691125-250)
(7, 391.37682378691125-250)
(8, 391.37682378691125-250)
(9, 391.37682378691125-250)
(10, 391.37682378691125-250)
(11, 391.37682378691125-250)
(12, 391.37682378691125-250)
(13, 391.37682378691125-250)
(14, 391.37682378691125-250)
(15.0,140.43658281254795)
(16.0,131.59182294448976)
(17.0,123.34485708702891)
(18.0,115.82195421134861)
(19.0,109.11577505138189)
(20.0,103.28221166344383)
(21.0,98.3341748876141)
(22.0,94.15193821388169)
(23.0,90.76077359729624)
(24.0,88.07211646600575)
(25.0,85.98623551985622)
(26.0,84.40368033057541)
(27.0,83.23397537669098)
(28.0,82.4005562763046)
(29.0,81.84236546244676)
(30.0,81.51330975550985)
(31.0,81.37974902174454)
(32.0,81.41722635447968)
(33.0,81.60720390245467)
(34.0,81.9343222323439)
(35.0,82.38444836651428)
(36.0,82.9435612469785)
(37.0,83.5973674787146)
(38.0,84.33147269399126)
(39.0,85.12887827468268)
(40.0,85.97221873910905)
(41.0,86.84266233513102)
(42.0,87.72015201566262)
(43.0,88.58367145748844)
(44.0,89.41142017055427)
(45.0,90.18176935331286)
(46.0,90.85546268250533)
(47.0,91.36366561082264)
(48.0,91.6471355994118)
(49.0,91.63505345992775)
(50.0,91.25278676921153)
(51.0,90.4175526098773)
(52.0,89.05575783551762)
(53.0,87.11702510257237)
(54.0,84.58443474929794)
(55.0,81.48431027241156)
(56.0,77.88734042371385)
(57.0,73.9315141269447)
(58.0,69.78240163090493)
(59.0,65.53986509669261)
(60.0,61.363152308511985)
(61.0,57.39841366230439)
(62.0,53.76791976775513)
(63.0,50.56438679623767)
(64.0,47.84599380210955)
(65.0,45.64027648349963)
(66.0,43.946981191564305)
(67.0,42.74324554564083)
(68.0,41.98917430789004)
(69.0,41.63318364229559)
(70.0,41.61676878905865)
(71.0,41.87843667884266)
(72.0,42.35672517600551)
(73.0,42.992479645067135)
(74.0,43.73051950726665)
(75.0,44.52077186964033)
(76.0,45.319014416884045)
(77.0,46.08731553998405)
(78.0,46.79428377067808)
(79.0,47.415106665322185)
(80.0,47.93140616093086)
(81.0,48.331134638544654)
(82.0,48.60821538933442)
(83.0,48.76233398803828)
(84.0,48.79842773603093)
(85.0,48.7261037868214)
(86.0,48.559073467751944)
(87.0,48.31448416759156)
(88.0,48.012202179171936)
(89.0,47.674033362889304)
(90.0,47.32296258945496)
(91.0,46.98237336923819)
(92.0,46.675289413074495)
(93.0,46.423509910330154)
(94.0,46.24693467064117)
(95.0,46.1628760609745)
(96.0,46.18543528321214)
(97.0,46.324950254167504)
(98.0,46.58753512228412)
(99.0,46.97475003524954)
(100.0,47.48341947086499)
(101.0,48.10562113216395)
(102.0,48.82886173683005)
(103.0,49.63647243433411)
(104.0,50.508368371058)
(105.0,51.42144836798229)
(106.0,52.35069031877691)
(107.0,53.2702526094082)
(108.0,54.15496425652225)
(109.0,54.981157069755284)
(110.0,55.72807330424445)
(111.0,56.37900006817517)
(112.0,56.92221197951869)
(113.0,57.351657457584224)
(114.0,57.66732206324622)
(115.0,57.87526548027364)
(116.0,57.98732422043821)
(117.0,58.02050937337924)
(118.0,57.99609246740164)
(119.0,57.93821860856275)
(120.0,57.87400131346857)
(121.0,57.83078211092487)
(122.0,57.83534920184974)
(123.0,57.91270039370164)
(124.0,58.084922415226686)
(125.0,58.37027068032131)
(126.0,58.78242179309018)
(127.0,59.32999554088576)
(128.0,60.01631494718964)
(129.0,60.839406543987195)
(130.0,61.7922194220626)
(131.0,62.86303282949325)
(132.0,64.03601655616711)
(133.0,65.29190626860958)
(134.0,66.60875693718174)
(135.0,67.96273739619681)
(136.0,69.32894805218638)
(137.0,70.68222018558032)
(138.0,71.99788336159708)
(139.0,73.25249193107231)
(140.0,74.4244824934903)
(141.0,75.49476097599921)
(142.0,76.4472010227932)
(143.0,77.2690623188609)
(144.0,77.95130467561052)
(145.0,78.48880364044362)
(146.0,78.88046288077305)
(147.0,79.12922263293416)
(148.0,79.24196589238221)
(149.0,79.22932599165478)
(150.0,79.10540198819592)
(151.0,78.88739086081716)
(152.0,78.5951478285973)
(153.0,78.25069421309212)
(154.0,77.87765851266397)
(155.0,77.50072805787573)
(156.0,77.14506732501223)
(157.0,76.83575099793103)
(158.0,76.59722200681205)
(159.0,76.45278954940835)
(160.0,76.42418012159698)
(161.0,76.53115214809009)
(162.0,76.79118208303214)
(163.0,77.21922694362269)
(164.0,77.82756529902366)
(165.0,78.62571596556865)
(166.0,79.62043062396896)
(167.0,80.81575532537232)
(168.0,82.21315258252622)
(169.0,83.81167536679283)
(170.0,85.60818298599202)
(171.0,87.59758844443564)
(172.0,89.77312690021091)
(173.0,92.12663525537243)
(174.0,94.64883368174992)
(175.0,97.32960088788855)
(176.0,100.15823620896003)
(177.0,103.12370285826063)
(178.0,106.21484808237648)
(179.0,109.42059728703805)
(180.0,112.73012045521705)
(181.0,116.13297031156031)
(182.0,119.61919266879426)
(183.0,123.17941020504657)
(184.0,126.80488155955408)
(185.0,130.48753810055928)
(186.0,134.2200010234559)
(187.0,137.99558159463612)
(188.0,141.80826738782497)
(189.0,145.65269728406724)
(190.0,149.52412784894767)
(191.0,153.41839348096863)
(192.0,157.33186246424168)
(193.0,161.2613907747371)
(194.0,165.20427519780236)
(195.0,169.15820702822384)
(196.0,173.1212273526869)
(197.0,177.09168466526467)
(198.0,181.06819534434084)
(199.0,185.04960732672913)
(200.0,189.03496715249912)} 
 node[right] {$C_n(y)$};
  
\end{tikzpicture}
\caption{$(K,B)$-convexity (ii) in the context of Example \ref{sec:numerical_example}, when $B=65$. For the sake of illustration, we set $x=y$. Intuitively, for all $a\leq B$, the slope of segment $X$ is greater or equal to the slope of segment $Z$.}
\label{fig:KB_convexity_ii}
\end{figure}
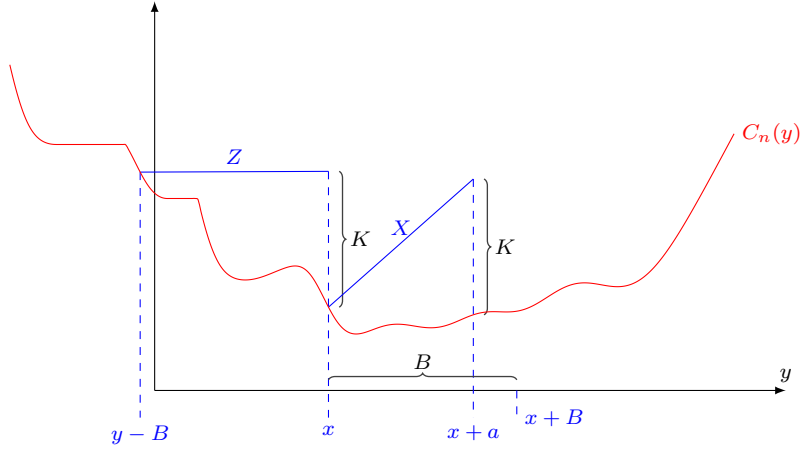 

\begin{definition}\label{def:KB_convex}
$g$ is $(K,B)$-convex if it satisfies $(K,B)$-convex (i) and $(K,B)$-convex (ii).
\end{definition}

\begin{theorem}\label{thm:kb_convexity_Gn_Cn}
$G_n(x)$ and $C_n(x)$ are $(K,B)$-convex.
\end{theorem}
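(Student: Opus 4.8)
The plan is to prove the statement by induction on the horizon length $n$, establishing in a single inductive step that $G_n$ inherits $(K,B)$-convexity from $C_{n-1}$ and that $C_n$ in turn inherits it from $G_n$. The base case is $C_0\equiv 0$, which is trivially $(K,B)$-convex. Before starting I would record one elementary fact: every convex function is $(K,B)$-convex. Both defining inequalities (Definitions \ref{def:KB_convex_i} and \ref{def:KB_convex_ii}) compare a difference quotient of $g$ over the right-hand interval $[x,x+a]$ with one over a left-hand interval ($[y-b,y]$ or $[y-B,y]$) with $y\le x$; for convex $g$ the monotonicity of difference quotients already gives the inequality without the $K$ term, while the nonnegative term $K/a$ on the left (and, for (ii), the bound $a\le B\Rightarrow K/a\ge K/B$) only helps.

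First, the easy direction ($C_{n-1}$ $(K,B)$-convex $\Rightarrow$ $G_n$ $(K,B)$-convex). I would prove two closure properties of the class of $(K,B)$-convex functions. (a) It is closed under adding a convex function: if $g$ is $(K,B)$-convex and $c$ is convex, then in each of Definitions \ref{def:KB_convex_i}--\ref{def:KB_convex_ii} the inequality for $g+c$ is the sum of the $(K,B)$-convexity inequality for $g$ and the corresponding difference-quotient inequality for $c$, which holds by convexity since $[x,x+a]$ lies to the right of the comparison interval. (b) It is closed under taking expectations over translates: since the defining inequalities involve only differences of arguments they are translation-invariant, and since they are linear in $g$ with the constant $K=\int K f_n(\xi)\,\mathrm d\xi$ surviving because $f_n$ integrates to one, integrating $g(\cdot-\xi)$ against $f_n$ preserves them. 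Writing $G_n(y)=vy+L_n(y)+\int_0^\infty C_{n-1}(y-\xi)f_n(\xi)\,\mathrm d\xi$, property (b) makes the convolution term $(K,B)$-convex and property (a) absorbs the convex summand $vy+L_n(y)$, so $G_n$ is $(K,B)$-convex.

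Second, the hard direction ($G_n$ $(K,B)$-convex $\Rightarrow$ $C_n$ $(K,B)$-convex). Since subtracting the linear function $vx$ preserves $(K,B)$-convexity by (a), it suffices to show that $J(x):=C_n(x)+vx=\min\{G_n(x),\,K+\min_{0<a\le B}G_n(x+a)\}$ is $(K,B)$-convex, i.e. that the capacitated ordering operator preserves the property. This is precisely the step for which plain $K$-convexity fails and for which $(K,B)$-convexity was designed, so I expect it to be the crux. I would exploit the structural results already available: $J\le G_n$ with equality off the order region, the existence of a largest order level $s_m$ (Lemma \ref{lemma:s_m}), and the fact that $G_n$ is nonincreasing to the left of any order point (Lemma \ref{lemma:decreasing}), itself underpinned by coercivity (Lemma \ref{lemma:limiting_behaviour}). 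Crucially the argument must not assume the order region is an interval, since that is the open question of the paper; instead I would verify Definitions \ref{def:KB_convex_i} and \ref{def:KB_convex_ii} for $J$ directly, by case analysis on whether each of the points $x,x+a,y,y-b$ (for (i)) or $x,x+a,y,y-B$ (for (ii)) lies in the order or the no-order region. In every case the required inequality should reduce to an instance of the matching $(K,B)$-convexity property of $G_n$, after replacing an ordering argument by its order-up-to level (with order quantity in $(0,B]$, so the capacity constraint is respected) and using the minimisation to pick the favourable comparison argument.

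The main obstacle will be the mixed cases of the second step, and especially verifying $(K,B)$-convexity (ii), whose right-hand side hard-codes a full-capacity step from $y-B$ to $y$. The whole point of building the extra $B$-step inequality into Definition \ref{def:KB_convex_ii} is to close exactly the configuration that breaks ordinary $K$-convexity: when $y$ (or $x$) orders but the comparison point $y-b$ or $y-B$ straddles the order/no-order boundary, so that $J$ switches branches between the two arguments. I anticipate the delicate bookkeeping to be the choice, for an ordering argument, of the right order-up-to level together with the capacity-$B$ comparison that lets the $(K,B)$-convexity (ii) inequality for $G_n$ dominate the branch-switching difference quotient of $J$; handling the boundary level $s_m$ and the strict/weak optimality ties there will require care but should follow from Lemmas \ref{lemma:decreasing}--\ref{lemma:s_m}.
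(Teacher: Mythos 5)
The paper does not actually prove this theorem: its ``proof'' is a pointer to \citep{Gallego2000Capacitated} and \citep{Chen2004The}, where part (i) (there called strong $CK$-convexity) and part (ii) are established, respectively. Your outline correctly reproduces the inductive architecture of those proofs, and your first step is sound: convex functions are $(K,B)$-convex, the class is closed under addition of convex functions and under convolution with a probability density, so $G_n$ inherits $(K,B)$-convexity from $C_{n-1}$. The problem is that your second step --- showing that the capacitated ordering operator $J(x)=\min\{G_n(x),\,K+\min_{x\le y\le x+B}G_n(y)\}$ preserves both halves of $(K,B)$-convexity --- is precisely the entire content of the theorem, and you do not carry it out. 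You correctly identify it as the crux and correctly flag the dangerous configurations, but ``in every case the required inequality should reduce to an instance of the matching $(K,B)$-convexity property of $G_n$'' is the claim to be proved, not an argument for it.

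Concretely, the mixed cases are not routine. For Definition \ref{def:KB_convex_ii}, when $x$ is in the no-order region (so $J(x)=G_n(x)$), $x+a$ orders up to an unsaturated level, $y$ orders at full capacity, and $y-B$ orders up to yet another level, the left side involves $G_n$ evaluated at an order-up-to point that depends on $x+a$ while the right side involves $G_n(y+B)-G_n(y-B+\cdot)$ over a span of up to $2B$; a single application of $(K,B)$-convexity of $G_n$ does not cover a $2B$-step, and one must chain (i) and (ii) together with the monotonicity facts (Lemmas \ref{lemma:decreasing}--\ref{lemma:s_m-B}) in a specific order. This is exactly why \citeauthor{Gallego2000Capacitated} were only able to establish half of the property and the other half had to wait for \citeauthor{Chen2004The}. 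As written, your proposal is a correct plan with the decisive step missing; to count as a proof it would need the full case analysis for (i) and (ii), including the boundary behaviour at $s_m$ and at $s_m-B$.
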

\begin{proof}
Proofs are available in \citep{Gallego2000Capacitated,Chen2004The}.  
\end{proof}


Originally in \citep{Chen1996X}, and then by introducing the concept of $(K,B)$-convexity (ii) in \citep{Chen2004The}, \citeauthor{Chen1996X} established existence of a level $Y\triangleq s_m$ beyond which it is not optimal order, and of another level $X\triangleq Y-B$ below which it is optimal to order up to capacity. The optimal policy therefore features a so-called ``$X-Y$ band'' structure.

\begin{lemma}\label{lemma:s_m-B}
If $G_n$ is $(K,B)$-convex (ii), it is optimal to order up to capacity at any $y\leq s_m-B$.
\end{lemma}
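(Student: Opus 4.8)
The plan is to reduce the claim to two inequalities about $G_n$ and then dispatch each with tools already in place. First I would unfold what ``ordering up to capacity is optimal'' means at a state $y$. Using the decomposition of the minimand in (\ref{eq:Cn}) through $G_n$ --- the post-order level $z=y$ (no order) costs $G_n(y)-vy$, whereas any positive order to $z\in(y,y+B]$ costs $K-vy+G_n(z)$ --- raising stock to $y+B$ is optimal precisely when (A) $G_n(y+B)\le G_n(z)$ for every $z\in(y,y+B]$, so the full order dominates every smaller positive order, and (B) $K+G_n(y+B)\le G_n(y)$, so the full order also dominates not ordering.

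For (A) I would invoke monotonicity below $s_m$. Since it is optimal to order at $s_m$ and $G_n$ is $(K,B)$-convex (Theorem \ref{thm:kb_convexity_Gn_Cn}), in particular $(K,B)$-convex (i), Lemma \ref{lemma:decreasing} yields that $G_n$ is nonincreasing on $(-\infty,s_m]$. Because $y\le s_m-B$ forces $(y,y+B]\subseteq(-\infty,s_m]$, the function $G_n$ is nonincreasing throughout this window and hence attains its minimum over it at the right endpoint $y+B$, which is exactly (A).

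The heart of the argument is (B), and this is where $(K,B)$-convexity (ii) enters. At $s_m$ it is optimal to place the order of size $a\in(0,B]$ that defines $S_m=s_m+a$; optimality of ordering over not ordering there means $K+G_n(s_m+a)\le G_n(s_m)$, i.e. $(K+G_n(s_m+a)-G_n(s_m))/a\le 0$. I would then apply Definition \ref{def:KB_convex_ii} to $g=G_n$ with $x=s_m$, order increment $a$, and the free variable of the definition set to $y+B$; the hypothesis $y+B\le s_m$ (again just $y\le s_m-B$) makes this instantiation legal, giving
\[
0\ \ge\ \frac{K+G_n(s_m+a)-G_n(s_m)}{a}\ \ge\ \frac{K+G_n(y+B)-G_n(y)}{B},
\]
whence $K+G_n(y+B)\le G_n(y)$, which is (B). Combining (A) and (B) shows that ordering up to capacity is optimal at every $y\le s_m-B$.

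The main obstacle --- really the only non-mechanical step --- is guessing the right instantiation of the $(K,B)$-convexity (ii) inequality: one must place the ``upper'' point at the known critical state $s_m$, so that optimality of ordering there forces the left-hand ratio to be nonpositive, while evaluating the right-hand difference quotient exactly across the capacity window $[y,y+B]$. Once that pairing is spotted, the rest is bookkeeping, and the monotonicity fact needed for (A) is already supplied by Lemma \ref{lemma:decreasing}.
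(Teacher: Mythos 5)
Your proof is correct and follows essentially the same route as the paper's: anchor the $(K,B)$-convexity (ii) inequality at an inventory level where ordering is optimal (you use $s_m$; the paper uses a generic such point $x_0$) so that the left-hand ratio is nonpositive, deduce $K+G_n(y+B)\le G_n(y)$ for $y\le s_m-B$, and use the monotonicity of $G_n$ below $s_m$ from Lemma \ref{lemma:decreasing} to rule out smaller positive orders. Your version merely makes explicit the two dominance conditions (full order versus partial order, and full order versus no order) that the paper's terser argument treats implicitly.
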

\begin{proof}
Let $x_0$ be any point at which it is optimal to order something. By $(K,B)$-convexity (ii), 
\[0>(K+G_n(x_0+a)-G_n(x_0))/a\geq (K+G_n(y)-G_n(y-B))/B,\]
for all $y\leq x_0$. Thus, $0>K+G_n(y)-G_n(y-B)$, because $G_n$ is nonincreasing for $y\leq x_0$. Hence, it is optimal to order up to capacity at any $y\leq  x_0 - B$.
\end{proof}


\cite{Gallego2000Capacitated} further characterised the structure of the optimal policy within \citeauthor{Chen1996X}'s $X-Y$ band. In particular, they showed that 
\begin{equation}\label{eq:Cn_policy}
C_n(x)=\left\{
\begin{array}{lrlr}
G^B_n(x)										&\quad x< \min\{s'-B,s\}					\\[-4pt]
\alpha\min\{-vx+G_n(x),G^B_n(x)\}+(1-\alpha)G^S_n(x)	&\quad \min\{s'-B,s\} \leq x < \max\{s'-B,s\}   	\\[-4pt]
\min\{-vx+G_n(x),G^S_n(x)\}						&\quad \max\{s'-B,s\} \leq x \leq s'			\\[-4pt]
-vx+G_n(x)									&\quad x> s'							
\end{array}
\right.
\end{equation}
where
\[
\begin{array}{l}
G^B_n(x)\triangleq K-vx+G_n(x+B)							\\
G^S_n(x)\triangleq K-vx+\min_{x\leq y \leq x+B} G_n(y) 			\\
s\triangleq \inf\{x|K+\min_{x\leq y \leq x+B} G_n(y) - G_n(x)\geq 0\}	\\
s'\triangleq \max\{x\leq S_m|K+\min_{x\leq y \leq x+B} G_n(y) - G_n(x)\leq 0\}
\end{array}
\]
and $\alpha$ is an indicator variable that takes value $1$ if $s'-s>B$, and $0$ otherwise.

\begin{lemma}\label{lemma:s}
$s'-B < s \leq s'$
\end{lemma}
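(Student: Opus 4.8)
The plan is to reduce everything to the sign behaviour of the single function
\[
\phi(x)\triangleq K+\min_{x\le y\le x+B}G_n(y)-G_n(x),
\]
which is continuous (the minimum of the continuous $G_n$ over a compact window depends continuously on $x$) and satisfies $\phi(x)\le K$ for all $x$ (take $y=x$). In these terms $s=\inf\{x:\phi(x)\ge0\}$ and $s'=\max\{x\le S_m:\phi(x)\le0\}$, and placing an order is (weakly) optimal at $x$ precisely when $\phi(x)\le0$. Coercivity (Lemma \ref{lemma:limiting_behaviour}) makes both extrema finite in the nondegenerate regime in which ordering is optimal somewhere: $\phi(x)\to K>0$ as $x\to+\infty$ (there $G_n$ is increasing, so the window minimum is attained at $y=x$), while $\phi$ is negative for $x$ sufficiently small. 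By construction $\phi(x)<0$ for every $x<s$ and $\phi(x)>0$ for every $x\in(s',S_m]$.

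For the inequality $s\le s'$ I would argue by contradiction. Suppose $s>s'$ and pick any $x$ in the nonempty interval $(s',s)$. Since $x<s$ we have $\phi(x)<0$, so ordering is strictly optimal at $x$; by the definition of $s_m$ (Lemma \ref{lemma:s_m}) this forces $x\le s_m\le S_m$. Thus $x\le S_m$ and $\phi(x)\le0$ while $x>s'$, contradicting the maximality in the definition of $s'$. Hence $s\le s'$.

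For the inequality $s'-B<s$ I would exploit $(K,B)$-convexity (ii) exactly as in the proof of Lemma \ref{lemma:s_m-B}, but anchored at the reference point $s'$. Since $\phi(s')\le0$, ordering is optimal at $s'$; let $a^*\in(0,B]$ be the associated order quantity, so that $K+G_n(s'+a^*)-G_n(s')=\phi(s')\le0$. Applying Definition \ref{def:KB_convex_ii} with $x=s'$, $a=a^*$ and an arbitrary $y\le s'$ gives
\[
\frac{K+G_n(s'+a^*)-G_n(s')}{a^*}\ge\frac{K+G_n(y)-G_n(y-B)}{B},
\]
so $K+G_n(y)-G_n(y-B)\le0$ for all $y\le s'$; equivalently, writing $z=y-B$, we get $K+G_n(z+B)-G_n(z)\le0$ for every $z\le s'-B$. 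Because ordering is optimal at $s'$, $G_n$ is nonincreasing on $(-\infty,s']$ (Lemma \ref{lemma:decreasing}), so for such $z$ the window minimum is attained at $z+B$ and therefore $\phi(z)=K+G_n(z+B)-G_n(z)\le0$. This shows $\{x:\phi(x)>0\}$ is disjoint from $(-\infty,s'-B]$, which already yields $s\ge s'-B$.

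The one delicate point --- and the step I expect to be the main obstacle --- is upgrading this to the \emph{strict} inequality. When ordering is strictly optimal at $s'$ (i.e.\ $\phi(s')<0$) the displayed chain is strict, so $\phi(z)<0$ for all $z\le s'-B$, and continuity of $\phi$ then pushes $\{\phi\ge0\}$ off an entire right-neighbourhood of $s'-B$, giving $s>s'-B$. The degenerate case $\phi(s')=0$ (ordering only weakly optimal at the upper threshold) must be treated separately; here I would fall back on the strict conclusion of Lemma \ref{lemma:s_m-B} itself --- whose proof already delivers the strict bound $0>K+G_n(y)-G_n(y-B)$ from a reference point at which ordering is strictly optimal --- together with continuity of $\phi$ and coercivity (Lemma \ref{lemma:limiting_behaviour}), which locate such a strict reference point just below the upper threshold. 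I expect isolating and dispatching this boundary case to absorb most of the work, the two principal inequalities being otherwise direct consequences of the sign analysis of $\phi$ combined with Lemmas \ref{lemma:decreasing}, \ref{lemma:s_m} and \ref{lemma:s_m-B}.
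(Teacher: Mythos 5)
Your proof is correct and follows essentially the same route as the paper's: the bound $s\le s'$ comes from the fact that ordering is suboptimal above $s'=s_m$, and the bound $s>s'-B$ comes from propagating $(K,B)$-convexity (ii) from the topmost ordering point down to $(-\infty,s'-B]$, which is exactly the content of Lemma \ref{lemma:s_m-B} that the paper invokes at this step. The degenerate case $\phi(s')=0$ that you isolate as the ``main obstacle'' is not treated in the paper either --- its proofs of Lemmas \ref{lemma:decreasing} and \ref{lemma:s_m-B} simply take the improvement from ordering at an ordering point to be strict --- so your argument already matches the published one.
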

\begin{proof}
Observe that $s_m=s'$, thus $s \leq s'$; by Lemma \ref{lemma:s_m-B}, it is optimal to order up to capacity at any $x\leq s_m-B$; 
hence $C_n(x)=G^B_n(x)$ for $x< s'-B$, and $C_n(x)=G^S_n(x)$ at $x=s'-B$; therefore $s> s'-B$.
\end{proof}

By leveraging Lemma \ref{lemma:s}, it is possible to further simplify \citeauthor{Gallego2000Capacitated}'s structure of the optimal policy as follows.

\begin{lemma}
\begin{equation}\label{eq:Cn_policy_simplified}
C_n(x)=\left\{
\begin{array}{lrlr}
G^B_n(x)											&\quad x< s_m-B			\\[-4pt]
G^S_n(x)											&\quad s_m-B \leq x < s     	\\[-4pt]
\min\{-vx+G_n(x),G^S_n(x)\}							&\quad s \leq x \leq s_m		\\[-4pt]
-vx+G_n(x)										&\quad x> s_m							
\end{array}
\right.
\end{equation}
\end{lemma}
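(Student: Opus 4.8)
The plan is to show that the four-region description in \eqref{eq:Cn_policy} collapses term-by-term to \eqref{eq:Cn_policy_simplified} once we exploit the ordering of thresholds already established. First I would collect the two facts at our disposal: the identity $s_m = s'$ (recorded in the proof of Lemma \ref{lemma:s}) and the chain $s' - B < s \leq s'$ (the statement of Lemma \ref{lemma:s} itself). Rearranging the latter gives $0 \leq s' - s < B$, so the gap $s' - s$ can never reach, let alone exceed, $B$.

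The key step is to pin down the indicator $\alpha$, which by definition equals $1$ precisely when $s' - s > B$ and $0$ otherwise. Since Lemma \ref{lemma:s} forces $s' - s < B$ \emph{strictly}, the defining inequality $s' - s > B$ is never satisfied, and hence $\alpha = 0$ identically. Substituting $\alpha = 0$ into the middle branch of \eqref{eq:Cn_policy} immediately reduces the convex combination $\alpha\min\{-vx+G_n(x),G^B_n(x)\}+(1-\alpha)G^S_n(x)$ to the single term $G^S_n(x)$, which is exactly the value prescribed on the second region of \eqref{eq:Cn_policy_simplified}.

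Next I would resolve the nested $\min$ and $\max$ that delimit the regions. Because $s' - B < s$, we have $\min\{s'-B, s\} = s'-B$ and $\max\{s'-B, s\} = s$. Replacing $s'$ by $s_m$ throughout (legitimate by $s_m = s'$) then rewrites the four regions of \eqref{eq:Cn_policy} as $x < s_m - B$, $\,s_m - B \leq x < s$, $\,s \leq x \leq s_m$, and $x > s_m$, carrying the values $G^B_n(x)$, $G^S_n(x)$, $\min\{-vx+G_n(x),G^S_n(x)\}$, and $-vx+G_n(x)$ respectively. This is precisely \eqref{eq:Cn_policy_simplified}.

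The argument is essentially a bookkeeping exercise, so I do not anticipate a deep obstacle; the one point to handle with care is the strictness supplied by Lemma \ref{lemma:s}, namely $s' - B < s$ (not $\leq$), since it is this strict inequality that both excludes the boundary case $s' - s = B$ and guarantees $\min\{s'-B,s\}=s'-B$ cleanly. The only remaining verification is that the region endpoints $s_m - B$, $s$, and $s_m$ match exactly across the two formulations, which follows directly from the substitutions above.
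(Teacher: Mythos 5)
Your proposal is correct and follows the same route as the paper: both invoke $s_m = s'$ together with Lemma \ref{lemma:s} to conclude $s'-s < B$, force $\alpha = 0$, and collapse the $\min$/$\max$ region boundaries of Eq.~\eqref{eq:Cn_policy} accordingly. The paper states this in a single sentence, whereas you spell out the bookkeeping; the substance is identical.
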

\begin{proof}
Observe that $s_m=s'$; because of Lemma \ref{lemma:s}, it is clear that $s_m-s\leq B$ and $\alpha=0$.
\end{proof}

\section{The modified multi-($s,S$) policy}\label{sec:modified_multi_ss}

\begin{definition}[Continuous Order Property]\label{def:always_order}
Let $x_0$ be an inventory level at which it is optimal to place an order, $C_n$ is said to have the continuous order property if it is optimal to place an order at $y$, for all $y<x_0$.
\end{definition}

\begin{lemma}\label{lemma:cop_convex_set}
If $C_n$ has the continuous order property, $\{x|C_n(x)-(G_n(x)-vx)<0\}$ is a convex set.
\end{lemma}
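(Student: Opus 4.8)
The plan is to turn the inequality $C_n(x)-(G_n(x)-vx)<0$ into a statement about the ordering decision, and then to read off from the continuous order property that the set in question is a \emph{lower set} of $\mathbb{R}$ and is therefore an interval. First I would unpack $C_n$ from \eqref{eq:Cn}: isolating the no-order option $y=x$ from the genuine-order options $x<y\le x+B$, and substituting $L_n(y)+\int_0^\infty C_{n-1}(y-\xi)f_n(\xi)\,\mathrm{d}\xi=G_n(y)-vy$ from \eqref{eq:Gn} together with the form of $c(\cdot)$, gives
\[
C_n(x)=\min\Big\{\,G_n(x)-vx,\ \ K-vx+\min_{x<y\le x+B}G_n(y)\,\Big\}.
\]
Since the no-order term is always feasible, $C_n(x)\le G_n(x)-vx$ for every $x$, so
\[
C_n(x)-(G_n(x)-vx)<0\quad\Longleftrightarrow\quad K+\min_{x<y\le x+B}G_n(y)<G_n(x),
\]
i.e. $S\triangleq\{x\mid C_n(x)-(G_n(x)-vx)<0\}$ is exactly the set of inventory levels at which ordering is strictly cheaper than not ordering.

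The core of the argument is then immediate from Definition~\ref{def:always_order}. Fix any $x_0\in S$; ordering is optimal at $x_0$, so the continuous order property guarantees that ordering is optimal at every $y<x_0$, whence $(-\infty,x_0]\subseteq S$. Letting $x_0$ range over $S$ shows that $S$ is a lower set, so $S$ is one of $\varnothing$, $(-\infty,c)$, or $(-\infty,c]$ with $c=\sup S$ (and $c\le s_m<\infty$ by Lemma~\ref{lemma:s_m}); in every case $S$ is an interval and therefore convex.

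The step I expect to require the most care is purely definitional: reconciling ``it is optimal to place an order'' in Definition~\ref{def:always_order} with the \emph{strict} inequality defining $S$. Read so that ordering is optimal at $x$ exactly when $C_n(x)<G_n(x)-vx$, the continuous order property \emph{is} the assertion that the strict-ordering region $S$ is downward closed, and the lower-set argument above applies verbatim; in particular no level at which ordering is merely tied with not ordering can sit strictly below a level of $S$, since the property would itself place it in $S$. If instead one admits ties, the property only shows that the larger weak-ordering set $\{x\mid K+\min_{x<y\le x+B}G_n(y)\le G_n(x)\}\supseteq S$ is a lower set, and excluding an interior tie strictly below a strict-ordering level --- the configuration that would disconnect $S$ --- becomes a genuine extra obstacle, for which $(K,B)$-convexity (Definition~\ref{def:KB_convex_i}) and the strict monotonicity of $G_n$ on $(-\infty,s_m]$ (via Lemma~\ref{lemma:decreasing}) would be the natural tools. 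I would therefore fix the strict reading at the outset, after which the lemma follows directly from the continuous order property.
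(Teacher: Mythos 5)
Your proof is correct and follows essentially the same route as the paper's: both use the continuous order property to conclude that the (strict) ordering region is downward closed and hence an interval, the paper phrasing this as $s=s'$ in \citeauthor{Gallego2000Capacitated}'s policy while you phrase it as $S$ being a lower set of $\mathbb{R}$. Your explicit treatment of the tie case is in fact more careful than the paper's own one-line argument, which passes silently from ``$C_n(x)-(G_n(x)-vx)\leq 0$ for all $x\leq s'$ and $>0$ for $x>s'$'' to convexity of the \emph{strict} sublevel set.
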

\begin{proof}
If $C_n$ has the continuous order property, in \citeauthor{Gallego2000Capacitated}'s policy $s=s'$; hence for all $x\leq s'$ it is optimal to order, that is $C_n(x)-(G_n(x)-vx)\leq 0$, and for all $x>s'$ it is optimal to not order, that is $C_n(x)-(G_n(x)-vx)> 0$; hence $\{x|C_n(x)-(G_n(x)-vx)<0\}$ is a convex set.
\end{proof}
In Fig. \ref{fig:CnMinusGn} we illustrate Lemma \ref{lemma:cop_convex_set} for our numerical example, which incidentally satisfies the continuous order property.
\begin{figure}
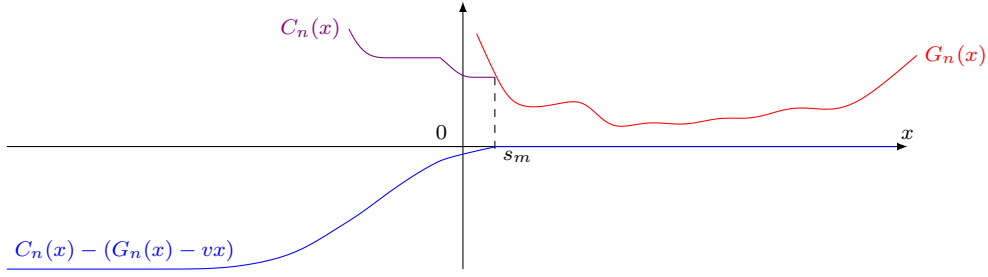

\centering
\include{numerical_example_9a}
\caption{Lemma \ref{lemma:cop_convex_set} in the context of Example \ref{sec:numerical_example}, when $B=65$.}
\label{fig:CnMinusGn}
\end{figure}

Consider $C_n$ as defined in Eq \eqref{eq:Cn}, let this function be $(K,B)$-convex, and assume that the continuous order property holds. When inventory falls below the reorder threshold $s_m$, defined in Lemma \ref{lemma:s_m}, the optimal policy takes the following form: at the beginning of each period, let $x$ be the initial inventory, the order quantity $Q$ is computed as
\begin{equation}\label{eq:optimal_policy_cap}
Q=\left\{
\begin{array}{lr}
\min\{S_k-x,  B\} 	&\quad s_{k-1}< x\leq s_k, \\
0 				&\quad x> s_{m};
\end{array}\right.
\end{equation}
where $k=1,\ldots, m$ and $s_0=-\infty$. In essence, the policy features $m$ reorder thresholds $s_1<s_2<\ldots<s_m$ and associated order-up-to-levels $S_1<S_2<\ldots<S_m$; at the beginning of each period, if inventory drops between reorder threshold $s_k$ and reorder threshold $s_{k-1}$, it is optimal to order a quantity $Q=\min\{S_i-x,  B\}$. For convenience, we denote the case $Q=B$ as {\em saturated ordering}, and the case $0<Q<B$ as {\em unsaturated ordering}. We shall name this control rule {\em modified multi}-$(s, S)$ {\em policy},  or ($s_k$,$S_k$) policy in short. This policy structure was also described in \cite[p. 612]{Gallego2000Capacitated}; however, \citeauthor{Gallego2000Capacitated} did not establish a relation between the continuous order property and the optimality of this policy.

\begin{lemma}\label{lemma:sm_Sm_cost_minimal_maximal}
Consider $S_m$ and $s_m$ as defined in Lemma \ref{lemma:s_m}, and let $S^*\triangleq \arg \min_y G_n(y)$,
\begin{enumerate}[(a)]
\item $S_m\leq S^*$;
\item $G_n(S_m)\leq G_n(x)$ for $x<S_m$;
\item $G_n(s_m)> G_n(x)$ for $s_m<x\leq S_m$.
\end{enumerate}
\end{lemma}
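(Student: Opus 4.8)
The plan is to read off from Lemma~\ref{lemma:s_m} and Lemma~\ref{lemma:decreasing} three elementary facts about $G_n$ near $s_m$, and then to deduce (b), (a) and (c) in that order. First, since it is optimal to order at $s_m$, Lemma~\ref{lemma:decreasing} gives that $G_n$ is nonincreasing on $(-\infty,s_m]$. Second, because the order placed at $s_m$ raises the stock to $S_m=s_m+a$, the optimal order quantity $a$ minimises the ordering cost $K+G_n(s_m+\cdot)-vs_m$ over $(0,B]$; hence $S_m$ minimises $G_n$ over $(s_m,s_m+B]$ and $G_n(S_m)\le G_n(y)$ for all $y\in(s_m,s_m+B]$. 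Third, comparing the cost of ordering up to $S_m$ against not ordering at $s_m$ yields $K+G_n(S_m)\le G_n(s_m)$, so that $G_n(S_m)<G_n(s_m)$ (using $K>0$).

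Claim (b) then follows by splitting the range $x<S_m$ at $s_m$: for $x\le s_m$ the monotonicity on $(-\infty,s_m]$ together with the strict drop gives $G_n(x)\ge G_n(s_m)>G_n(S_m)$, while for $s_m<x<S_m\le s_m+B$ the minimality of the order-up-to level gives $G_n(x)\ge G_n(S_m)$ directly. Claim (a) is then immediate: the minimiser $S^\ast$ exists by coercivity (Lemma~\ref{lemma:limiting_behaviour}), and if $S^\ast<S_m$ then (b) forces $G_n(S_m)\le G_n(S^\ast)=\min_y G_n(y)\le G_n(S_m)$, so $S^\ast$ attains the same value as $S_m$; thus no point strictly below $S_m$ beats $G_n(S_m)$, and the global minimiser satisfies $S_m\le S^\ast$.

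The substantive step is (c), which I would prove by contradiction using $(K,B)$-convexity (i) together with the maximality of $s_m$. Suppose $G_n(x)\ge G_n(s_m)$ for some $x\in(s_m,S_m)$. Applying Definition~\ref{def:KB_convex_i} at the point $x$ with upward increment $S_m-x$ and downward increment $x-s_m$ (both lie in $(0,B]$, being bounded by $a\le B$) gives
\[
\frac{K+G_n(S_m)-G_n(x)}{S_m-x}\;\ge\;\frac{G_n(x)-G_n(s_m)}{x-s_m}\;\ge\;0,
\]
so $G_n(x)\le K+G_n(S_m)$. Combined with the preliminary fact $K+G_n(S_m)\le G_n(s_m)\le G_n(x)$, this collapses to $G_n(x)=K+G_n(S_m)$. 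But then, since $S_m\in(x,x+B]$, we get $K+\min_{x<y\le x+B}G_n(y)\le K+G_n(S_m)=G_n(x)$, so ordering is optimal at $x>s_m$, contradicting the definition of $s_m$ as the largest inventory level at which ordering is optimal. Hence $G_n(x)<G_n(s_m)$ on $(s_m,S_m)$, and the endpoint is covered by $G_n(S_m)\le G_n(s_m)-K<G_n(s_m)$.

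The main obstacle is precisely the strictness in (c): the $(K,B)$-convexity inequality alone yields only $G_n(x)\le G_n(s_m)$, and ruling out equality is exactly where the maximality of $s_m$ must be invoked, through the observation that equality would re-open an ordering region strictly above $s_m$. I would also flag that the borderline case $K+G_n(S_m)=G_n(s_m)$ and the treatment of ties in the definition of $S^\ast$ rely on the standing assumption $K>0$ and a fixed tie-breaking convention for minimisers; under these the three arguments go through unchanged.
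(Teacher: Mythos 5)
Your proof is correct, and for parts (b) and (c) it runs on the same engine as the paper's: both ultimately contradict either the optimality of ordering up to $S_m$ from $s_m$ (for (b)) or the maximality of $s_m$ as an ordering level (for (c)). The differences are worth noting. For (b) you are actually more complete than the paper, which only treats the range $s_m<x<S_m$ and leaves the case $x\le s_m$ (handled by monotonicity via Lemma \ref{lemma:decreasing} plus $K+G_n(S_m)\le G_n(s_m)$) implicit. For (c) your detour through $(K,B)$-convexity (i) is sound but redundant: once you have $K+G_n(S_m)\le G_n(s_m)\le G_n(x)$ you can already conclude that ordering up to $S_m$ is weakly optimal at $x$ (feasible since $S_m-x<S_m-s_m=a\le B$), which is precisely the paper's one-line contradiction; the convexity inequality only re-derives the reverse bound $G_n(x)\le K+G_n(S_m)$ that you never actually need. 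For (a) your route is genuinely different: the paper argues directly from capacity (if $s_m\ge S^*-B$ one orders up to $S^*$, otherwise $S_m=s_m+a\le s_m+B<S^*$), whereas you deduce (a) from (b) plus a tie-breaking convention on $\arg\min$. Your version buys economy (it reuses (b)) at the cost of leaning on how ties among global minimisers are resolved — a caveat you rightly flag, and one the paper's capacity argument largely sidesteps. Both approaches share the same reliance on $K>0$ for the strictness in (c), and on the convention that a tie between ordering and not ordering counts as ``optimal to order'' when contradicting the maximality of $s_m$.
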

\begin{proof}
(a) If $s_m\geq S^*-B$, then we must necessarily order up to $S^*$ as no point dominates a global minimum.
If $s_m< S^*-B$, then we do not have sufficient capacity to reach $S^*$, hence the optimum order quantity will be a value $a\leq B$; and from $s_m$ we will order up to a point $S_m\triangleq s_m+a\leq S^*$. 
(b) Assume, ex absurdo, $G_n(S_m)> G_n(S)$ for some $S$ such that $s_m<S<S_m$; then from $s_m$ it would not be optimal to order up to $S_m$, which contradicts Lemma \ref{lemma:s_m}. 
(c) Assume, ex absurdo, $G_n(s_m)\leq G_n(s)$ for some $s$ such that $s_m<s\leq S_m$; then from $s$ it would be optimal to order up to $S_m$, this contradicts the fact that $s_m$ is the maximum inventory level at which it is optimal to place an order (Lemma \ref{lemma:s_m}). 
\end{proof}
Observe that $S_m$ is not necessarily a minimizer of $G_n$; this is further illustrated in Appendix \ref{sec:appendix_1}.

By building upon $(K,B)$-convexity of $G_n(x)$ and $C_n(x)$, and upon the assumption that the continuous order property in Definition \ref{def:always_order} holds, we next  establish existence of reorder thresholds $s_1<s_2<\ldots<s_m$ and associated order-up-to-levels $S_1<S_2<\ldots<S_m$ that can be used to control the system according to the optimal ordering policy in Eq. \eqref{eq:optimal_policy_cap}. 

\begin{definition}
A function $g:\mathcal{D}\rightarrow\mathbb{R}$ defined on a convex subset $\mathcal{D}\in\mathbb{R}$ is {\em quasiconvex} if, for all $x,y\in \mathcal{D}$ and $\lambda\in[0,1]$, \[g(\lambda x +(1-\lambda)y)\leq \max\{g(x),g(y)\}.\]
\end{definition}

\begin{definition}
The {\em quasiconvex envelope} (QCE) $\tilde{g}$ of a function $g$ on a convex subset $\mathcal{D}\in\mathbb{R}$ is defined as
\[\sup\{\tilde{g}(x)|\tilde{g}:\mathbb{R}\rightarrow\mathbb{R}~\mbox{quasiconvex}, \tilde{g}(x)\leq g(x)~\forall x\in\mathcal{D}\}.\]
\end{definition}

\begin{lemma}\label{lemma:nonincreasing_envelope}
The QCE of $G_n$ on interval $(s_m,S_m)$ is nonincreasing.
\end{lemma}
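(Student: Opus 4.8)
The plan is to identify the quasiconvex envelope on $(s_m,S_m)$ explicitly with the ``running minimum from the left,''
\[
h(x)\triangleq\inf_{s_m<u\leq x}G_n(u),
\]
and then observe that $h$ is nonincreasing by construction. The single structural fact that makes this work is Lemma \ref{lemma:sm_Sm_cost_minimal_maximal}(b): since $G_n(S_m)\leq G_n(x)$ for all $x<S_m$, the function $G_n$ attains its infimum over $(s_m,S_m)$ at the right endpoint (in the limit $x\to S_m^-$). I will use continuity of $G_n$, which is implicit throughout (e.g.\ in Lemma \ref{lemma:limiting_behaviour}).

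First I would check the easy inequality $h\leq \tilde{G}_n$. By its definition $h$ is nonincreasing, and any monotone function on an interval has interval (hence convex) sublevel sets, so $h$ is quasiconvex; moreover $h(x)=\inf_{u\leq x}G_n(u)\leq G_n(x)$, so $h$ is a quasiconvex minorant of $G_n$ on $(s_m,S_m)$, which extends to a real-valued nonincreasing (hence quasiconvex) function on all of $\mathbb{R}$ by setting it constant outside the interval. Since the QCE is the supremum of all such minorants, $h\leq \tilde{G}_n$ on $(s_m,S_m)$.

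The main step is the reverse inequality $\tilde{G}_n\leq h$, which I would obtain by showing that \emph{every} quasiconvex minorant $\tilde{g}$ of $G_n$ is dominated by $h$. Fix $x\in(s_m,S_m)$ and any $u$ with $s_m<u\leq x$. For $v\in(x,S_m)$, quasiconvexity of $\tilde{g}$ together with the minorant property at $u$ and $v$ gives $\tilde{g}(x)\leq\max\{\tilde{g}(u),\tilde{g}(v)\}\leq\max\{G_n(u),G_n(v)\}$. Letting $v\to S_m^-$ and invoking continuity of $G_n$ together with $G_n(S_m)\leq G_n(u)$ from Lemma \ref{lemma:sm_Sm_cost_minimal_maximal}(b), the right-hand side tends to $\max\{G_n(u),G_n(S_m)\}=G_n(u)$. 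Hence $\tilde{g}(x)\leq G_n(u)$ for every admissible $u\leq x$, so $\tilde{g}(x)\leq\inf_{u\leq x}G_n(u)=h(x)$; taking the supremum over all quasiconvex minorants yields $\tilde{G}_n\leq h$.

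Combining the two inequalities gives $\tilde{G}_n=h$ on $(s_m,S_m)$, which is nonincreasing, establishing the claim. I expect the main obstacle to be exactly the reverse inequality, and in particular the passage $v\to S_m^-$: it is here that one must combine left-continuity of $G_n$ at $S_m$ with the fact that the interval minimum sits at the right endpoint (Lemma \ref{lemma:sm_Sm_cost_minimal_maximal}(b)). That is precisely what collapses $\max\{G_n(u),G_n(v)\}$ to $G_n(u)$ and thereby forces the monotone structure; without part (b) the envelope could dip and rise, and the conclusion would fail.
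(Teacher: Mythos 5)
Your proof is correct and rests on the same structural fact as the paper's own proof, namely Lemma \ref{lemma:sm_Sm_cost_minimal_maximal}(b) (together with (c)), which places the minimum of $G_n$ over $(s_m,S_m)$ at the right endpoint. The paper simply asserts that this forces the QCE to be nonincreasing, whereas you make that implication rigorous by identifying the QCE with the running minimum $h(x)=\inf_{s_m<u\leq x}G_n(u)$ and verifying both inequalities (using continuity of $G_n$, which the paper assumes implicitly); this is a more detailed rendering of the same argument rather than a different route.
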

\begin{proof}
From Lemma \ref{lemma:sm_Sm_cost_minimal_maximal}b and Lemma \ref{lemma:sm_Sm_cost_minimal_maximal}c, it follows
$G_n(s_m)> G_n(x)\geq G_n(S_m)$ for $s_m< x< S_m$. Hence, the QCE of $G_n$ on interval $(s_m,S_m)$ is a nonincreasing function.
\end{proof}

\begin{definition}
Consider a function $g:\mathbb{R}\rightarrow\mathbb{R}$, a point $x$ in the domain of $g$ is a {\em strict local minimum from the right} if there exists $\delta>0$ such that $g(y)>g(x)$ for all $y\in(x,x+\delta]$.
\end{definition}

\begin{definition}\label{def:qce_belongs}
Let $[a,b]$, $a\leq b$, in the domain of a function $g$ be a compact interval such that $b$ is a strict local minimum from the right, $g(x)=g(b)$ for all $x\in[a,b]$, and $g(a)=\tilde{g}(a)$; $[a,b]$ {\em nontrivially belongs to the QCE} $\tilde{g}$ of $g$, if there exists $\delta>0$ such that $g(y)>g(x)$ and $g(y)=\tilde{g}(y)$ for all $y\in(a-\delta,a]$; $[a,b]$ {\em trivially belongs to the QCE} $\tilde{g}$ of $g$, if there is no $\delta>0$ such that $g(y)=\tilde{g}(y)$ for all $y\in(a-\delta,a)$.
\end{definition}
The concepts introduced in Definition \ref{def:qce_belongs} are illustrated in Fig. \ref{fig:qce_belongs}.
\begin{figure}
\centering
\includegraphics[scale=0.7]{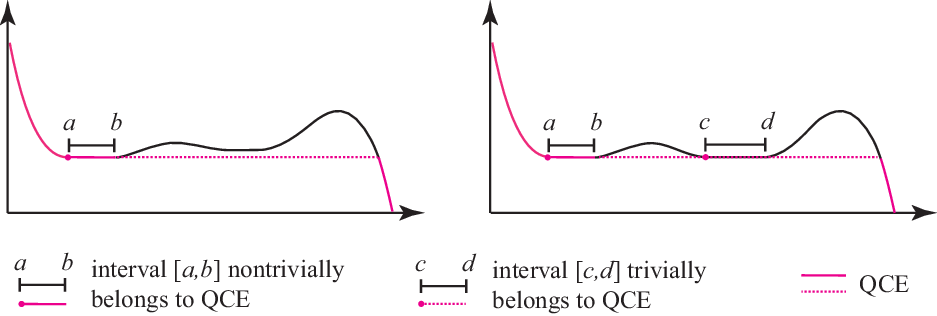}
\caption{Graphical illustration of the concepts introduced in Definition \ref{def:qce_belongs}; note that intervals $[a,b]$ and $[c,d]$ can be degenerate, and reduce to a single point.}\label{fig:qce_belongs}
\end{figure}

Assume $G_n$ is $(K,B)$-convex; this function must be increasing over some intervals in $(s_m,\infty)$, otherwise $G_n(y)$ would be a nonincreasing function for all $y$, which contradicts Lemma \ref{lemma:limiting_behaviour}. 
Let $\widehat{\mathcal{S}}$ be the set of all points $a$ such that interval $[a,b]\in(s_m,S_m)$ {\em nontrivially belongs to the QCE} of $G_n$.

\begin{lemma}\label{lemma:local_minima}
Let $x_0$ be any point at which it is optimal to place an order; then either it is the case that $\arg \min_{y\in(x_0,x_0+B]} G_n(y)=x_0+B$, or that $\arg \min_{y\in(x_0,x_0+B]} G_n(y)=\widehat{S}_k$, for some $\widehat{S}_k\in\widehat{\mathcal{S}}$.
\end{lemma}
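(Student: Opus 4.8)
\emph{Plan.} Since it is optimal to order at $x_0$, Lemma \ref{lemma:s_m} gives $x_0\le s_m$, and Lemma \ref{lemma:decreasing} applied at $s_m$ shows $G_n$ is nonincreasing on $(-\infty,s_m]$. Writing the optimal order-up-to level as the leftmost minimiser $y^*\triangleq\arg\min_{x_0<y\le x_0+B}G_n(y)$, the goal is to show $y^*=x_0+B$ or $y^*\in\widehat{\mathcal S}$. I would proceed in three stages: localise $y^*$ inside $(s_m,S_m]$, show $y^*$ touches the QCE, and show the touching is nontrivial.

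First I would localise $y^*$. Because $G_n$ is nonincreasing on $(-\infty,s_m]$, its minimum over the part of $(x_0,x_0+B]$ lying in $(-\infty,s_m]$ is attained at the right end; hence either $x_0+B\le s_m$, giving $y^*=x_0+B$ (the saturated, first alternative), or $x_0+B>s_m$ and, invoking Lemma \ref{lemma:sm_Sm_cost_minimal_maximal}(c) to see that points just above $s_m$ beat $G_n(s_m)$, we get $y^*\in(s_m,x_0+B]$. For the upper bound I note $x_0+B\le s_m+B$, so every $y\in(S_m,x_0+B]$ lies in $(s_m,s_m+B]$; by the definition of $S_m$ in Lemma \ref{lemma:s_m} as the order-up-to level from $s_m$ we have $G_n(S_m)\le G_n(y)$ on $(s_m,s_m+B]$, while Lemma \ref{lemma:sm_Sm_cost_minimal_maximal}(b) gives $G_n(S_m)\le G_n(y)$ for $y<S_m$. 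Thus $G_n(S_m)\le G_n(y)$ for \emph{all} $y\in(x_0,x_0+B]$, so the leftmost minimiser obeys $y^*\le S_m$. Combining, $y^*=x_0+B$ or $y^*\in(s_m,S_m]$.

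Next I would identify $y^*$ with a touching point of the QCE. By Lemma \ref{lemma:nonincreasing_envelope} the QCE of $G_n$ is nonincreasing on $(s_m,S_m)$, and the largest nonincreasing minorant of $G_n$ there is the left running minimum $\tilde G_n(x)=\inf_{s_m<t\le x}G_n(t)$; hence $G_n(x)=\tilde G_n(x)$ exactly when $x$ is a new left-to-right low. Since $x_0\le s_m$ we have $(s_m,y^*]\subseteq(x_0,x_0+B]$, and as $y^*$ minimises $G_n$ over the latter interval, $G_n(y^*)\le G_n(t)$ for all $t\in(s_m,y^*]$; therefore $G_n(y^*)=\tilde G_n(y^*)$, i.e.\ $y^*$ touches the QCE.

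Finally, nontriviality. Because $y^*$ is the \emph{leftmost} minimiser, $G_n(t)>G_n(y^*)$ for every $t\in(s_m,y^*)$; under continuity of $G_n$ the approach to $y^*$ from the left must eventually drop strictly below every earlier value, so there is $\delta>0$ with $G_n(y)>G_n(y^*)$ and $G_n(y)=\tilde G_n(y)$ for all $y\in(y^*-\delta,y^*)$. Taking $[a,b]$ to be the maximal flat piece of $G_n$ terminating in the strict local minimum from the right at $y^*$ (degenerate to the single point $y^*$ in the generic case), Definition \ref{def:qce_belongs} then certifies that $[a,b]$ nontrivially belongs to the QCE, whence $y^*=a\in\widehat{\mathcal S}$; the boundary value $y^*=S_m$ is the top element $\widehat S_m$ and is dispatched by the same running-minimum argument on $(s_m,S_m]$. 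I expect this last stage to be the crux: converting ``strict leftmost minimiser'' into ``coincides with the QCE on an entire left neighbourhood'' rigorously requires a mild regularity hypothesis on $G_n$ (continuity together with finitely many local extrema) to exclude pathological oscillation near $y^*$, and it also needs the minor bookkeeping that reads the interval in the definition of $\widehat{\mathcal S}$ as $(s_m,S_m]$ so that $S_m$ itself qualifies.
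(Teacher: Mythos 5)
Your proof is correct and follows essentially the same route as the paper's: both reduce the claim to showing that, unless the minimiser is $x_0+B$, it must lie on the nonincreasing QCE of $G_n$ on $(s_m,S_m)$ (Lemma \ref{lemma:nonincreasing_envelope}) and must do so nontrivially, the key point in each case being that every candidate in $(s_m,y^*]$ is reachable from $x_0\le s_m$. The paper establishes the two QCE conditions by contradiction (otherwise a strictly better, resp.\ equally good, local minimum would exist to the left and within reach), whereas you argue directly via the running-minimum characterisation of the envelope and make explicit the localisation $y^*\in(s_m,S_m]$ and the $S_m$-endpoint bookkeeping; the regularity caveat you flag for the nontriviality step is equally implicit in the paper's own argument.
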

\begin{proof}
Assume that at $x_0$ it is optimal to place an order. Then either the lowest cost will be attained by ordering up to $x_0+B$, or by ordering up to some local minimum $S\in (x_0,x_0+B)$. Consider this latter case. We first show that $S$ must belong to the QCE of $G_n$ on $(s_m,S_m)$. Assume, ex absurdo, that $S$ does not belong to the QCE of $G_n$ on $(s_m,S_m)$; since the QCE of $G_n$ is nonincreasing on $(s_m,S_m)$ (Lemma \ref{lemma:nonincreasing_envelope}), there must exist some other local minimum $\widehat{S}$, such that $s_m<\widehat{S}<S$ and $G_n(\widehat{S})<G_n(S)$, which contradicts the fact that at $x_0$ it is optimal to order up to $S$. Finally, assume interval $[S,b]$, for some $b\geq S$, trivially belongs to the QCE of $G_n$ on $(s_m,S_m)$, this means there must exist some other local minimum $\widehat{S}$, such that $s_m<\widehat{S}<S$ and $G_n(\widehat{S})=G_n(S)$; hence ordering up to $S$ is no better than ordering up to $\widehat{S}$. 
\end{proof}
Lemma \ref{lemma:local_minima} is further illustrated in a numerical example presented in Appendix \ref{sec:appendix_5}.

In what follows, we shall assume that $\widehat{\mathcal{S}}\triangleq\{\widehat{S}_1,\widehat{S}_2,\ldots,\widehat{S}_{w-1}\}\subseteq\mathcal{S}$ is an ordered set, so that $s_m<\widehat{S}_1<\widehat{S}_2<\ldots<\widehat{S}_{w-1}<S_m$, and $|\widehat{\mathcal{S}}|\geq 0$.

\begin{lemma}\label{lemma:cost_ordering_of_local_minima}
$G_n(s_m)>G_n(\widehat{S}_1)>G_n(\widehat{S}_2)>\ldots>G_n(\widehat{S}_{w-1})>G_n(S_m)$.
\end{lemma}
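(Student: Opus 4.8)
The plan is to prove the chain one strict inequality at a time, walking left to right along the ordered list $s_m<\widehat{S}_1<\dots<\widehat{S}_{w-1}<S_m$. Throughout I write $\widetilde{G}_n$ for the QCE of $G_n$ on $(s_m,S_m)$, which is nonincreasing by Lemma \ref{lemma:nonincreasing_envelope}, and I use the two defining facts of the set $\widehat{\mathcal{S}}$: every $\widehat{S}_k$ satisfies $G_n(\widehat{S}_k)=\widetilde{G}_n(\widehat{S}_k)$, and the flat interval $[\widehat{S}_k,b]$ attached to it nontrivially belongs to $\widetilde{G}_n$ in the sense of Definition \ref{def:qce_belongs}. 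The leftmost inequality $G_n(s_m)>G_n(\widehat{S}_1)$ is then immediate: since $s_m<\widehat{S}_1<S_m$, Lemma \ref{lemma:sm_Sm_cost_minimal_maximal}(c) gives $G_n(s_m)>G_n(x)$ for every $x\in(s_m,S_m]$, and in particular at $x=\widehat{S}_1$.

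For a generic interior step $G_n(\widehat{S}_j)>G_n(\widehat{S}_{j+1})$ I would use the nontrivial-belonging of $[\widehat{S}_{j+1},b]$: there is $\delta>0$ such that every $y\in(\widehat{S}_{j+1}-\delta,\widehat{S}_{j+1})$ satisfies both $G_n(y)=\widetilde{G}_n(y)$ and $G_n(y)>G_n(\widehat{S}_{j+1})$, and after shrinking $\delta$ we may also assume $y>\widehat{S}_j$. Combining $G_n(\widehat{S}_j)=\widetilde{G}_n(\widehat{S}_j)$, the monotonicity of $\widetilde{G}_n$, and these two facts in turn yields
\[
G_n(\widehat{S}_j)=\widetilde{G}_n(\widehat{S}_j)\ \ge\ \widetilde{G}_n(y)\ =\ G_n(y)\ >\ G_n(\widehat{S}_{j+1}),
\]
which is exactly the claimed strict inequality. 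This disposes of all the inequalities internal to $\widehat{\mathcal{S}}$.

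The terminal inequality $G_n(\widehat{S}_{w-1})>G_n(S_m)$ is the delicate one, since here no nontrivial-belonging hypothesis is available at the right endpoint ($S_m\notin\widehat{\mathcal{S}}$). Lemma \ref{lemma:sm_Sm_cost_minimal_maximal}(b) already gives $G_n(S_m)\le G_n(\widehat{S}_{w-1})$, so only strictness must be argued, which I would do by contradiction. If $G_n(\widehat{S}_{w-1})=G_n(S_m)$, then---this common value being the minimum of $G_n$ on $(s_m,S_m]$ by Lemma \ref{lemma:sm_Sm_cost_minimal_maximal}(b)---the nonincreasing envelope $\widetilde{G}_n$ would be pinned at that minimum throughout $[\widehat{S}_{w-1},S_m)$. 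Because the right end $b<S_m$ of the flat interval attached to $\widehat{S}_{w-1}$ is a strict local minimum from the right, $G_n$ must rise strictly above the minimum just after $b$ and then, by continuity, descend back to it before $S_m$. Tracing that descent produces a point in $(\widehat{S}_{w-1},S_m)$ that is either a further nontrivially-belonging left-endpoint (contradicting the maximality of $\widehat{S}_{w-1}$ in $\widehat{\mathcal{S}}$) or an equally costly order-up-to level strictly smaller than $S_m$, which the tie-breaking underlying Lemma \ref{lemma:local_minima} would have selected in place of $S_m$ (contradicting the definition of $S_m$ in Lemma \ref{lemma:s_m}). Either alternative is impossible, so the inequality is strict.

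I expect this final step to be the main obstacle. The interior inequalities are essentially bookkeeping once the envelope is known to be nonincreasing, but excluding equality at the right endpoint requires cleanly separating the \emph{nontrivial}- from the \emph{trivial}-belonging configurations of Definition \ref{def:qce_belongs} and invoking the precise selection rule defining $S_m$, so that a ``dip back to the minimum'' inside $(\widehat{S}_{w-1},S_m)$ cannot coexist with $\widehat{S}_{w-1}$ being the largest element of $\widehat{\mathcal{S}}$. Making that exclusion fully rigorous---and, relatedly, confirming the continuity of $G_n$ used in the descent argument---is where the real work lies.
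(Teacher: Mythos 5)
Your argument uses exactly the two ingredients the paper's (one\nobreakdash-line) proof invokes --- the definition of $\widehat{\mathcal{S}}$ via nontrivial belonging and the nonincreasing QCE of Lemma \ref{lemma:nonincreasing_envelope} --- so the approach is essentially the paper's, just written out in full; the first inequality via Lemma \ref{lemma:sm_Sm_cost_minimal_maximal}(c) and the interior chain $G_n(\widehat{S}_j)=\tilde{G}_n(\widehat{S}_j)\geq\tilde{G}_n(y)=G_n(y)>G_n(\widehat{S}_{j+1})$ are correct.

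The one place you go beyond the paper is the terminal inequality $G_n(\widehat{S}_{w-1})>G_n(S_m)$, and you are right that this is the delicate step: it does not follow from monotonicity of the envelope alone, and the paper's proof simply glosses over it. Your contradiction argument, however, is not closed as written. It leans on ``the tie-breaking underlying Lemma \ref{lemma:local_minima}'' and on the maximality of $\widehat{S}_{w-1}$ in $\widehat{\mathcal{S}}$, but the paper never specifies a tie-breaking rule in Lemma \ref{lemma:s_m}: $S_m=s_m+a$ where $a$ is \emph{an} optimal order quantity at $s_m$, and if $G_n(\widehat{S}_{w-1})=G_n(S_m)$ both targets are optimal from $s_m$. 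The clean way to finish is to make the convention explicit: take $a$ (hence $S_m$) to be the \emph{smallest} optimal order quantity at $s_m$. Under that convention, if $G_n$ attained the value $G_n(S_m)$ already at $\widehat{S}_{w-1}<S_m$, then $\widehat{S}_{w-1}$ would itself have been chosen as $S_m$, so $\widehat{S}_{w-1}$ could not be a distinct element of $\widehat{\mathcal{S}}$; equivalently, $G_n(x)>G_n(S_m)$ for all $x\in(s_m,S_m)$, which gives strictness directly without tracing descents of $G_n$. Without some such convention the chain can degenerate to an equality at the last link, so the gap you flag is real, but it is a one-sentence fix rather than ``the real work.''
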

\begin{proof}
Immediately follows from the definition of $\widehat{\mathcal{S}}$ and from Lemma \ref{lemma:nonincreasing_envelope}.
\end{proof}

\begin{corollary}
$\widehat{\mathcal{S}}$ is empty if $G_n$ is quasiconvex on $(s_m,S_m)$. 
\end{corollary}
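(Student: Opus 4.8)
The plan is to exploit the fact that quasiconvexity collapses the gap between $G_n$ and its quasiconvex envelope, and then to contradict the defining property of $\widehat{\mathcal{S}}$. First I would observe that if $G_n$ is quasiconvex on $(s_m,S_m)$, then on this interval $G_n$ is itself a quasiconvex minorant of $G_n$; hence, by the variational definition of the QCE as the supremum of all quasiconvex minorants, the QCE $\tilde{G}_n$ of $G_n$ satisfies $\tilde{G}_n(x)=G_n(x)$ for every $x\in(s_m,S_m)$. Combining this identity with Lemma \ref{lemma:nonincreasing_envelope}, which asserts that $\tilde{G}_n$ is nonincreasing on $(s_m,S_m)$, I conclude that $G_n$ itself is nonincreasing on $(s_m,S_m)$.

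The second step is a short contradiction argument. Suppose $\widehat{\mathcal{S}}\neq\emptyset$; then there is a point $a$ whose associated interval $[a,b]\subset(s_m,S_m)$ nontrivially belongs to $\tilde{G}_n$. By the setup of Definition \ref{def:qce_belongs}, the right endpoint $b$ is a strict local minimum from the right, so there exists $\delta>0$ with $G_n(y)>G_n(b)$ for all $y\in(b,b+\delta]$. Since $b<S_m$, I may shrink $\delta$ so that $b+\delta<S_m$, placing the whole interval $(b,b+\delta]$ inside $(s_m,S_m)$; these points witness a strict increase of $G_n$, contradicting the monotonicity established in the first step. Hence $\widehat{\mathcal{S}}$ is empty.

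I expect no serious obstacle here: the only point deserving care is the identity $\tilde{G}_n=G_n$ on the restricted domain, which follows directly from the definition of the QCE rather than from any property specific to $G_n$. The remaining work is purely the translation of ``nontrivially belongs'' into a strict local increase to the right of $b$, which is immediate from the hypothesis that $b$ is a strict local minimum from the right, so the bulk of the content is already supplied by Lemma \ref{lemma:nonincreasing_envelope}.
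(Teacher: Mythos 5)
Your proposal is correct and follows essentially the same route as the paper: both deduce from Lemma \ref{lemma:nonincreasing_envelope} that a quasiconvex $G_n$ must itself be nonincreasing on $(s_m,S_m)$ (you simply make explicit the intermediate identity $\tilde{G}_n=G_n$ that the paper leaves implicit), and both then conclude that no strict local minimum from the right can exist there, so $\widehat{\mathcal{S}}$ is empty.
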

\begin{proof}
If $G_n$ quasiconvex on $(s_m,S_m)$, from Lemma \ref{lemma:nonincreasing_envelope} it follows that $G_n$ is nonincreasing, and hence it does not admit any strict local minima from the right in this interval.
\end{proof}

For the sake of convenience let $\widehat{S}_w\triangleq S_m$.
\begin{lemma}\label{lemma:reorder_points}
For each $\widehat{S}_k\in\widehat{\mathcal{S}}$ there exists a nonempty set $\{b|\widehat{S}_{k}<b<\widehat{S}_{k+1},G_n(b)\geq G_n(\widehat{S}_k)\}$.
\end{lemma}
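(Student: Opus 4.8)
The plan is to read off the required point directly from the geometry encoded in Definition~\ref{def:qce_belongs}, using the strict ordering of the envelope minima supplied by Lemma~\ref{lemma:cost_ordering_of_local_minima}. First I would fix $\widehat{S}_k\in\widehat{\mathcal{S}}$ and unpack the hypothesis that it is the left endpoint $a_k$ of some interval $[a_k,b_k]\subset(s_m,S_m)$ that nontrivially belongs to the QCE of $G_n$. By Definition~\ref{def:qce_belongs} this means $G_n$ is constant on $[a_k,b_k]$ with value $G_n(a_k)=G_n(\widehat{S}_k)$, and the right endpoint $b_k$ is a strict local minimum from the right; that is, there exists $\delta>0$ with $G_n(y)>G_n(b_k)=G_n(\widehat{S}_k)$ for every $y\in(b_k,b_k+\delta]$. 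When the interval is degenerate, $a_k=b_k=\widehat{S}_k$ and the same statement holds.

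The key intermediate step is to show $b_k<\widehat{S}_{k+1}$, so that the points just to the right of $b_k$ still lie below $\widehat{S}_{k+1}$. I would argue by contradiction: if $b_k\geq\widehat{S}_{k+1}$, then since $\widehat{S}_{k+1}>\widehat{S}_k=a_k$ we would have $\widehat{S}_{k+1}\in(a_k,b_k]\subseteq[a_k,b_k]$, forcing $G_n(\widehat{S}_{k+1})=G_n(\widehat{S}_k)$ because $G_n$ is constant on the plateau. This contradicts the strict inequality $G_n(\widehat{S}_{k+1})<G_n(\widehat{S}_k)$ guaranteed by Lemma~\ref{lemma:cost_ordering_of_local_minima}, where for the terminal case $k=w-1$ we invoke the convention $\widehat{S}_w\triangleq S_m$ and the bound $G_n(S_m)<G_n(\widehat{S}_{w-1})$. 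Hence $b_k<\widehat{S}_{k+1}$.

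Finally, combining the two facts, the open interval $\left(b_k,\min\{b_k+\delta,\widehat{S}_{k+1}\}\right)$ is nonempty because $b_k<\widehat{S}_{k+1}$ and $\delta>0$; any $b$ in it satisfies $\widehat{S}_k\leq b_k<b<\widehat{S}_{k+1}$ and $G_n(b)>G_n(\widehat{S}_k)$, hence in particular $G_n(b)\geq G_n(\widehat{S}_k)$, so $b$ belongs to the set in the statement and the set is nonempty. The case $\widehat{\mathcal{S}}=\emptyset$ is vacuous. I do not anticipate a genuine obstacle: the statement is essentially a bookkeeping consequence of the definitions, and the only points demanding care are the correct identification of the strict-local-minimum point as the \emph{right} endpoint $b_k$ of the plateau (rather than $\widehat{S}_k$ itself), and the verification that $b_k<\widehat{S}_{k+1}$, which ensures the extracted point lies strictly between $\widehat{S}_k$ and $\widehat{S}_{k+1}$.
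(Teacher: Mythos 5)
Your proof is correct, and it reaches the conclusion by a slightly different and more elementary route than the paper. The paper's proof starts from the strict inequality $G_n(\widehat{S}_k)>G_n(\widehat{S}_{k+1})$ of Lemma~\ref{lemma:cost_ordering_of_local_minima} and then invokes the extreme value theorem to assert that $G_n$ attains a local maximum $x^*\in(\widehat{S}_k,\widehat{S}_{k+1})$ with $G_n(x^*)>G_n(\widehat{S}_k)$; the reason this maximum must strictly exceed $G_n(\widehat{S}_k)$ and must lie in the \emph{open} interval (rather than at the endpoint $\widehat{S}_k$ itself) is left implicit, and it is precisely the strict-local-minimum-from-the-right property of the plateau's right endpoint. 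You instead make that property the engine of the argument: you extract the witness point directly from the $\delta$-neighbourhood to the right of the plateau endpoint, after first checking (by the plateau-constancy contradiction with Lemma~\ref{lemma:cost_ordering_of_local_minima}) that this endpoint lies strictly below $\widehat{S}_{k+1}$. What your version buys is that it needs no continuity of $G_n$ and no compactness argument, and it fills in the step the paper glosses over; what the paper's version buys is the marginally stronger by-product that a genuine interior local maximum exceeding $G_n(\widehat{S}_k)$ exists, which is closer in spirit to how $b_k$ is subsequently used in Definition~\ref{def:bk_sk}. One small caution: you reuse the symbol $b_k$ for the right endpoint of the plateau, whereas the paper reserves $b_k$ in Definition~\ref{def:bk_sk} for the maximum of the set established by this lemma; the two objects are different, so a different letter would avoid confusion.
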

\begin{proof}
Consider $s_m$ and $S_m$ as defined in Lemma \ref{lemma:s_m}.
From Lemma \ref{lemma:cost_ordering_of_local_minima}, $G_n(\widehat{S}_k)>G_n(\widehat{S}_{k+1})$, for $s_m<\widehat{S}_k<\widehat{S}_{k+1}<S_m$. The result in this lemma follows from the extreme value theorem, since $G_n$ must attain a local maximum at $x^*\in(\widehat{S}_k,\widehat{S}_{k+1})$, such that $G_n(x^*)>G_n(\widehat{S}_k)>G_n(\widehat{S}_{k+1})$.
Note that there cannot be a point $S\in \widehat{\mathcal{S}}$, such that $\widehat{S}_k<S<\widehat{S}_{k+1}$. 
\end{proof}

\begin{definition}\label{def:bk_sk}
For $k=1,\ldots,w-1$, 
$b_k\triangleq \max\{b|\widehat{S}_{k}<b<\widehat{S}_{k+1},G_n(b)\geq G_n(\widehat{S}_k)\}$, and
$s_k\triangleq b_k-B$;
finally, for the sake of convenience, we define $s_0\triangleq -\infty$.
\end{definition}

\begin{lemma}\label{lemma:cool}
$s_{k-1}<\widehat{S}_k-B< s_k$
\end{lemma}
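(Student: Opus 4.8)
The plan is to derive both strict inequalities directly from the defining relation $s_j\triangleq b_j-B$ in Definition \ref{def:bk_sk}, together with the fact that each $b_j$ lies strictly inside the open interval $(\widehat{S}_j,\widehat{S}_{j+1})$. No fresh analytic input about $G_n$ is required: the entire argument is simply the chain $\widehat{S}_j<b_j<\widehat{S}_{j+1}$ shifted by $-B$, so I would present it as two one-line computations plus the handling of the lower endpoint.

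First I would establish the right-hand inequality $\widehat{S}_k-B<s_k$. By Lemma \ref{lemma:reorder_points} the set $\{b\mid \widehat{S}_{k}<b<\widehat{S}_{k+1},\,G_n(b)\geq G_n(\widehat{S}_k)\}$ is nonempty, so its maximum $b_k$ is well defined and, being an element of that set, satisfies $b_k>\widehat{S}_k$. Subtracting $B$ from both sides and using $s_k=b_k-B$ gives $s_k=b_k-B>\widehat{S}_k-B$, as required.

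Next I would treat the left-hand inequality $s_{k-1}<\widehat{S}_k-B$, splitting on the value of $k$. For $k=1$ the claim reads $s_0<\widehat{S}_1-B$, which holds trivially because of the convention $s_0\triangleq-\infty$. For $k\geq 2$, the point $b_{k-1}$ lies in $(\widehat{S}_{k-1},\widehat{S}_{k})$ by Lemma \ref{lemma:reorder_points}, hence $b_{k-1}<\widehat{S}_k$; subtracting $B$ yields $s_{k-1}=b_{k-1}-B<\widehat{S}_k-B$.

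Concatenating the two displays produces the desired chain $s_{k-1}<\widehat{S}_k-B<s_k$. Since the reasoning is essentially bookkeeping, the only point needing care — and the closest thing to an obstacle — is that every $b_j$ invoked above must actually exist and be \emph{strictly} interior to its interval, so that the inequalities are strict rather than weak; this is precisely what Lemma \ref{lemma:reorder_points} guarantees, while the degenerate lower endpoint is absorbed by the convention $s_0=-\infty$.
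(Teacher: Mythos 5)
Your proof is correct and follows the same route as the paper, whose entire proof is the single line ``This follows from Definition \ref{def:bk_sk}''; you have simply filled in the bookkeeping ($\widehat{S}_k<b_k<\widehat{S}_{k+1}$ shifted by $-B$, plus the $s_0=-\infty$ convention) that the authors left implicit.
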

\begin{proof}
This follows from Definition \ref{def:bk_sk}.
\end{proof}

\begin{lemma}\label{lemma:Cn_general_form}
$C_n(x)=-vx+\min\{G_n(x),\min_{x\leq y \leq x+B} G_n(y) + K\}$ takes the general form
\[
C_n(x)=\left\{
\begin{array}{lrlr}
K-vx+G_n(x+B)			&\quad s_{k-1}< x\leq \widehat{S}_{k}-B		&\quad k=1,\ldots,w-1\\[-4pt]
K-vx+G_n(\widehat{S}_k)	&\quad \widehat{S}_{k}-B< x\leq s_{k}     		&\quad k=1,\ldots,w-1\\[-4pt]
K-vx+G_n(x+B)			&\quad s_{w-1}< x\leq S_m-B\\[-4pt]
K-vx+G_n(S_m)		&\quad S_m-B< x\leq s_m	\\[-4pt]
-vx+G_n(x)			&\quad x> s_m.
\end{array}
\right.
\]
\end{lemma}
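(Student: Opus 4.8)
The plan is to start from the representation $C_n(x)=-vx+\min\{G_n(x),\,\min_{x\le y\le x+B}G_n(y)+K\}$ given in the statement and split the analysis at the reorder boundary $s_m$. For $x>s_m$, Lemma \ref{lemma:s_m} says ordering is not optimal, so the inner minimum is attained by the ``no order'' term and $C_n(x)=-vx+G_n(x)$, which is the last case. For $x\le s_m$ the continuous order property (Definition \ref{def:always_order}) guarantees ordering is optimal, so $C_n(x)=-vx+K+M(x)$ with $M(x)\triangleq\min_{x\le y\le x+B}G_n(y)$. The entire task then reduces to identifying, on each claimed sub-interval, the minimiser $y^{*}(x)$ of $G_n$ over the capacity window $[x,x+B]$.

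The key structural input is Lemma \ref{lemma:local_minima}, which restricts $y^{*}(x)$ to be either the right endpoint $x+B$ (saturated ordering) or one of the QCE local minima $\widehat S_1,\dots,\widehat S_{w-1}$ (with $\widehat S_w\triangleq S_m$). I would combine this with the strict value ordering $G_n(\widehat S_1)>\dots>G_n(\widehat S_w)$ of Lemma \ref{lemma:cost_ordering_of_local_minima}: among the QCE minima contained in a given window, the one of largest index is cheapest, so at most one target is ever relevant. It then remains to track, as $x$ increases through $(s_{k-1},s_k]$, when that target $\widehat S_k$ is reachable and when saturating becomes cheaper. Reachability requires $x+B\ge\widehat S_k$, i.e. $x\ge\widehat S_k-B$, which by Lemma \ref{lemma:cool} is precisely the breakpoint separating the two claimed cases for index $k$. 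For $x\le\widehat S_k-B$ the target cannot be reached, and since $x+B>s_{k-1}+B=b_{k-1}$ forces $G_n(x+B)<G_n(\widehat S_{k-1})$, no QCE minimum in the window undercuts $x+B$; Lemma \ref{lemma:local_minima} then gives $y^{*}(x)=x+B$ and $C_n(x)=K-vx+G_n(x+B)$. For $\widehat S_k-B<x\le s_k$ the candidate $x+B$ lies in $(\widehat S_k,b_k]$, where $G_n(x+B)\ge G_n(\widehat S_k)$ by the very definition of $b_k$ as the last point at which $G_n$ returns to the level $G_n(\widehat S_k)$, so ordering up to $\widehat S_k$ is no worse and $C_n(x)=K-vx+G_n(\widehat S_k)$. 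The final two cases are just the $k=w$ instance of this dichotomy with $\widehat S_w=S_m$ and $s_w=s_m$.

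The main obstacle is making the ``order up to $\widehat S_k$'' case airtight, and it has two delicate components. First, one must confirm that $\widehat S_k$ actually lies inside the window throughout $(\widehat S_k-B,s_k]$, i.e. that $s_k\le\widehat S_k$, equivalently $b_k-\widehat S_k\le B$; otherwise the prescribed order quantity $\widehat S_k-x$ would turn negative near $s_k$ and the stated form would be ill-posed. This is a genuine boundedness-by-capacity statement, and it is here --- rather than in the mere crossing-point definition of $b_k$ --- that $(K,B)$-convexity (Theorem \ref{thm:kb_convexity_Gn_Cn}) must be invoked. Second, I must exclude the possibility that $G_n$ dips strictly below $G_n(\widehat S_k)$ somewhere inside $(\widehat S_k,b_k)$, which would make an interior window point cheaper than $\widehat S_k$; for this I would lean on the quasiconvex-envelope apparatus --- the envelope is nonincreasing on $(s_m,S_m)$ (Lemma \ref{lemma:nonincreasing_envelope}) and admits no QCE minimum strictly between $\widehat S_k$ and $\widehat S_{k+1}$ (Lemma \ref{lemma:reorder_points}) --- together with Lemma \ref{lemma:local_minima} to rule out any such interior point as the window minimiser. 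Once these two points are secured, the interval-by-interval matching is routine, and assembling the five cases completes the proof.
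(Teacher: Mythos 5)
Your proposal is correct and follows essentially the same route as the paper: a case-by-case identification of $\arg\min_{y\in(x,x+B]}G_n(y)$ on each interval, using Lemma \ref{lemma:local_minima} to restrict the candidates to $x+B$ or the QCE minima, Lemma \ref{lemma:cost_ordering_of_local_minima} to rank them, and Definition \ref{def:bk_sk} with Lemma \ref{lemma:cool} to locate the breakpoints. The only remark worth making is that your first ``delicate component'' ($s_k\le\widehat{S}_k$) does not require a fresh $(K,B)$-convexity argument: since $b_k<\widehat{S}_{k+1}\le S_m=s_m+a$ with $a\le B$, one gets $s_k=b_k-B<S_m-B\le s_m<\widehat{S}_k$ directly from the definitions.
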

\begin{proof}
If at $x$ it is optimal to order $a\triangleq S-x$ units, where $a>0$, then $C_n(x)=K-vx+G_n(S)$. We consider each interval for $x$ in order.

$x> s_m$: this case follows from Lemma \ref{lemma:s_m}, since $s_m$ denotes an inventory level beyond which no ordering is optimal. Conversely, because of the continuous order property, for $x\leq s_m$ it is always optimal to order;

$S_m-B< x\leq s_m$: in this interval, $\arg \min_{y\in(x,x+B]} G_n(y)=S_m$, this follows from the definition of $S_m$ (Lemma \ref{lemma:s_m}) and from the fact that $G_n$ is nonincreasing in $(-\infty,s_m]$ (Lemma \ref{lemma:decreasing});

$s_{w-1}< x\leq S_m-B$: in this interval, from Definition \ref{def:bk_sk} it follows that 
$\arg \min_{y\in(x,x+B]} G_n(y)=x+B$, since $G_n(\widehat{S}_k)>G_n(S_m)$, for all $k=1,\ldots,w-1$;

$\widehat{S}_{k}-B< x\leq s_{k}$, for all $k=1,\ldots,w-1$: in this interval, from Definition \ref{def:bk_sk} and from Lemma \ref{lemma:cool}, it follows that $\arg \min_{y\in(x,x+B]} G_n(y)=\widehat{S}_k$;

$s_{k-1}< x\leq \widehat{S}_{k}-B$, for all $k=1,\ldots,w-1$: in this interval, from Definition \ref{def:bk_sk} and from Lemma \ref{lemma:cool}, it follows that $\arg \min_{y\in(x,x+B]} G_n(y)=x+B$, since $G_n(\widehat{S}_k)>G_n(\widehat{S}_{k+1})$;
finally, note that if $s_0<x\leq \widehat{S}_1-B$, then $\arg \min_{y\in(x,x+B]} G_n(y)=x+B$, since $G_n$ is nonincreasing in $(-\infty,\widehat{S}_1]$: in fact, $G_n$ is nonincreasing in $(-\infty,s_m]$ (Lemma \ref{lemma:decreasing}), $\widehat{\mathcal{S}}$ is an ordered set, hence by definition there exists no point $s_m<S<\widehat{S}_1$ that is a strict local minimum from the right, $G_n(s_m)>G_n(\widehat{S}_1)$ (Lemma \ref{lemma:cost_ordering_of_local_minima}), and thus $G_n$ is nonincreasing in $(s_m,\widehat{S}_1]$.
\end{proof}

\begin{definition}\label{def:Sk}
$S_k\triangleq\widehat{S}_k$, for all $k=1,\ldots,w-1$; and, for convenience, let $m\triangleq w$.
\end{definition}

By applying Definition \ref{def:Sk}, we can rewrite, for $k=1,\ldots,m$,
\begin{equation}\label{eq:Cn_policy_skSk}
C_n(x)=\left\{
\begin{array}{lrlr}
K-vx+G_n(x+B)			&\quad s_{k-1}< x\leq S_{k}-B		&\quad \mbox{\em(saturated ordering)}	\\[-4pt]
K-vx+G_n(S_k)			&\quad S_{k}-B< x\leq s_{k}     		&\quad \mbox{\em(unsaturated ordering)}	\\[-4pt]
-vx+G_n(x)			&\quad x> s_m					&\quad \mbox{\em(no order)}
\end{array}
\right.
\end{equation}
where $S_1,\ldots,S_m$ are the order-up-to-levels and $s_1,\ldots,s_m$ the reorder points of the $(s_k, S_k)$ policy.

\begin{corollary}
If the continuous order property holds, the $(s_k, S_k)$ policy generalises the $X-Y$ band discussed in \citep{Chen2004The}.
\end{corollary}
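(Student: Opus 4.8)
The plan is to exhibit a region-by-region correspondence between Chen's $X-Y$ band and the $(s_k,S_k)$ policy in Eq.~\eqref{eq:Cn_policy_skSk}, and then to argue that the latter refines it. Following the notation fixed just before Lemma~\ref{lemma:s_m-B}, set $Y\triangleq s_m$ and $X\triangleq Y-B=s_m-B$. The two extremal regions are recovered at once: by Lemma~\ref{lemma:s_m}, for $x>s_m=Y$ it is not optimal to order, which is exactly the \emph{no order} branch of Eq.~\eqref{eq:Cn_policy_skSk}; and by Lemma~\ref{lemma:s_m-B}, for $x\leq s_m-B=X$ it is optimal to order up to capacity, which I will reconcile with the \emph{saturated ordering} branch.

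The key step is to show that \emph{every} unsaturated interval prescribed by Eq.~\eqref{eq:Cn_policy_skSk} lies inside the band $(X,Y]$. These intervals are $(S_k-B,s_k]$ for $k=1,\ldots,m$. By Definition~\ref{def:Sk}, $S_k=\widehat{S}_k$ for $k<m$, and together with $s_m<\widehat{S}_1<\cdots<\widehat{S}_{w-1}<S_m$ and $S_m>s_m$ (Lemma~\ref{lemma:s_m}) this yields $S_k>s_m$ for every $k$, hence $S_k-B>s_m-B=X$. Since the reorder points satisfy $s_1<\cdots<s_m=Y$, each such interval has right endpoint at most $Y$, so indeed $(S_k-B,s_k]\subseteq(X,Y]$. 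Consequently no unsaturated interval reaches below $X$; because ordering is optimal for all $x\leq s_m$ under the continuous order property, the policy must prescribe saturated ordering throughout $(-\infty,X]$, in agreement with Lemma~\ref{lemma:s_m-B}. This simultaneously shows the band has width $Y-X=B$, consistent with the ``at most one capacity unit'' bound of \citep{Chen2004The}.

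It then only remains to observe that on the band $(X,Y]$ the $(s_k,S_k)$ policy supplies precisely the structure that \citep{Chen2004The} left partially specified: Eq.~\eqref{eq:Cn_policy_skSk} subdivides $(X,Y]$ into the alternating saturated and unsaturated intervals induced by the interior order-up-to-levels $\widehat{S}_1,\ldots,\widehat{S}_{w-1}$ and their associated reorder points, collapsing to a single modified $(s,S)$ segment exactly when $\widehat{\mathcal{S}}=\emptyset$. Hence the policy reproduces the $X-Y$ band on $(-\infty,X]$ and on $(Y,\infty)$ while refining it on $(X,Y]$, which is the asserted generalisation. I expect the one delicate point to be the containment argument of the second paragraph: one must invoke the strict ordering of $\widehat{\mathcal{S}}$ and the convention $\widehat{S}_w\triangleq S_m$ with care, so that no unsaturated interval slips below $X$.
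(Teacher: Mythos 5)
Your argument is sound, and it rests on the same ingredients as the paper's proof --- Lemma~\ref{lemma:s_m}, Lemma~\ref{lemma:s_m-B}, the structure in Eq.~\eqref{eq:Cn_policy_skSk}/Lemma~\ref{lemma:Cn_general_form}, and the inequality $S_k>s_m$ for every $k$ --- but you deploy them toward a different conclusion. You show that every unsaturated interval $(S_k-B,s_k]$ is contained in $(X,Y]$ with $X=s_m-B$ and $Y=s_m$, so the $(s_k,S_k)$ policy agrees with the $X$--$Y$ band outside the band and fully specifies the alternating saturated/unsaturated structure inside it; the ``generalisation'' is read as a refinement of the band's interior. The paper instead uses the very same inequality ($s_m<S_1$, hence $s_m-B<S_1-B$) together with Lemma~\ref{lemma:Cn_general_form} to conclude that saturated ordering is in fact optimal for all $x\leq S_1-B$, and therefore defines $\bar{X}\triangleq S_1-B>X$ to obtain a strictly \emph{tighter} band $\bar{X}$--$Y$ of width $s_m-S_1+B<B$. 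This is the one substantive point your write-up misses: you assert the band has width exactly $Y-X=B$, which is only Chen's original bound, whereas the lemmas you already invoke give the sharper statement that the region of unresolved behaviour shrinks to $(S_1-B,\,s_m]$. Your containment argument is correct and arguably more explicit about the region-by-region correspondence, but you should add the one extra sentence observing that the first saturated branch of Eq.~\eqref{eq:Cn_policy_skSk} extends up to $S_1-B$, not merely up to $X$, since that is the precise sense in which the paper claims the $(s_k,S_k)$ policy improves on the $X$--$Y$ band rather than merely being consistent with it.
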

\begin{proof}
In \citeauthor{Chen2004The}, $Y\triangleq s_m$ and $X\triangleq Y-B$, where $X$ denotes an inventory level below which it is optimal to order up to capacity; hence, their $X-Y$ band has size $B$. According to Lemma \ref{lemma:Cn_general_form}, it is optimal to order up to capacity for all $x\leq S_1-B$. According to Lemma \ref{lemma:sm_Sm_cost_minimal_maximal}c, $s_m<S_1$, and thus $s_m-B<S_1-B$. By letting $\bar{X} \triangleq S_1-B$, we obtain a tighter band $\bar{X}-Y$.
\end{proof}

\begin{corollary}
If the continuous order property holds, the $(s_k, S_k)$ policy generalises the policy discussed in \citep{Gallego2000Capacitated}.
\end{corollary}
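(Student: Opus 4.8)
The plan is to show that the $(s_k,S_k)$ policy of Eq.~\eqref{eq:Cn_policy_skSk} and the simplified form of \citeauthor{Gallego2000Capacitated}'s policy in Eq.~\eqref{eq:Cn_policy_simplified} describe the \emph{same} cost function $C_n$ under the continuous order property, with the former supplying a strictly finer partition of the ordering region that makes every order-up-to level explicit; \citeauthor{Gallego2000Capacitated}'s coarser regions are then recovered by aggregating consecutive intervals of Eq.~\eqref{eq:Cn_policy_skSk}.

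First I would collapse Eq.~\eqref{eq:Cn_policy_simplified}. By Lemma~\ref{lemma:cop_convex_set} the continuous order property forces $s=s'$, and since $s'=s_m$ (proof of Lemma~\ref{lemma:s}) we obtain $s=s'=s_m$. Consequently the third region of Eq.~\eqref{eq:Cn_policy_simplified} degenerates to the single point $x=s_m$ and the policy reduces to three pieces: $C_n(x)=G^B_n(x)$ for $x<s_m-B$, $C_n(x)=G^S_n(x)$ for $s_m-B\leq x\leq s_m$, and $C_n(x)=-vx+G_n(x)$ for $x>s_m$. The no-order region $x>s_m$ and the ``order everywhere below $s_m$'' prescription coincide immediately with the corresponding regions of Eq.~\eqref{eq:Cn_policy_skSk}, again by the continuous order property.

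The crux is to verify that the two expressions agree pointwise on $(-\infty,s_m]$ and that the refined structure of Eq.~\eqref{eq:Cn_policy_skSk} is entirely localised inside \citeauthor{Gallego2000Capacitated}'s two saturation regions. I would first establish this localisation: because every $\widehat{S}_k\in\widehat{\mathcal{S}}$ satisfies $\widehat{S}_k\in(s_m,S_m)$ and each $b_k\in(\widehat{S}_k,\widehat{S}_{k+1})\subseteq(s_m,S_m)$ (Definition~\ref{def:bk_sk}), it follows that $\widehat{S}_k-B>s_m-B$ and $s_k=b_k-B>s_m-B$; together with $S_m-B\leq s_m$ (Lemma~\ref{lemma:s}) and $s_{w-1}<S_m-B$, all thresholds $\widehat{S}_1-B,s_1,\ldots,\widehat{S}_{w-1}-B,s_{w-1},S_m-B$ lie in $(s_m-B,s_m]$. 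Hence for $x<s_m-B$ we are in the first saturated interval $s_0<x\leq\widehat{S}_1-B$ of Eq.~\eqref{eq:Cn_policy_skSk}, where $C_n(x)=K-vx+G_n(x+B)=G^B_n(x)$, matching \citeauthor{Gallego2000Capacitated}'s first region; and the entire alternation of saturated and unsaturated intervals is confined to $(s_m-B,s_m]$, i.e.~to the region where $C_n=G^S_n$.

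It then remains to identify $G^S_n$ with the piecewise value of Eq.~\eqref{eq:Cn_policy_skSk}, which is exactly the content of Lemma~\ref{lemma:Cn_general_form}: on $(s_m-B,s_m]$ the minimiser $\arg\min_{x\leq y\leq x+B}G_n(y)$ equals either $x+B$ (saturated, giving $K-vx+G_n(x+B)$) or some $\widehat{S}_k$ or $S_m$ (unsaturated, giving $K-vx+G_n(\widehat{S}_k)$ or $K-vx+G_n(S_m)$), so that $G^S_n(x)=K-vx+\min_{x\leq y\leq x+B}G_n(y)$ reproduces Eq.~\eqref{eq:Cn_policy_skSk} term by term. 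Aggregating the intervals $(s_{k-1},S_k-B]$ and $(S_k-B,s_k]$ over $k=1,\ldots,m$ then collapses Eq.~\eqref{eq:Cn_policy_skSk} back to the three regions of the simplified \citeauthor{Gallego2000Capacitated} policy, establishing that the $(s_k,S_k)$ policy generalises it. The main obstacle is the localisation step: without the observation that all intermediate reorder points and saturation thresholds sit strictly above $s_m-B$, one could neither cleanly match \citeauthor{Gallego2000Capacitated}'s first region to pure capacity ordering nor confine the multi-level refinement to the $G^S_n$ region; everything else is bookkeeping driven by Lemma~\ref{lemma:Cn_general_form} and the interleaving $s_{k-1}<\widehat{S}_k-B<s_k$ of Lemma~\ref{lemma:cool}.
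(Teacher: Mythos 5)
Your argument is correct and follows essentially the same route as the paper: the continuous order property forces $s=s'=s_m$, collapsing \citeauthor{Gallego2000Capacitated}'s four-region policy to the three-region (saturated / unsaturated-or-saturated / no-order) form, which the $(s_k,S_k)$ policy then refines region by region via Lemma \ref{lemma:Cn_general_form}. The only blemish is a citation slip: $S_m-B\leq s_m$ follows from Lemma \ref{lemma:s_m} (since $S_m=s_m+a$ with $0<a\leq B$), not from Lemma \ref{lemma:s}.
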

\begin{proof}
\citeauthor{Gallego2000Capacitated}'s optimal policy structure features two thresholds: $s$ and $s'$, where $-\infty\leq s\leq s'\leq S^*$, and $S^*= \arg \min_y G_n(y)$. 
Clearly, $s'$ is the same threshold we denoted as $s_m$, and under the assumption that the continuous order property holds, it follows that $s=s'$. \citeauthor{Gallego2000Capacitated}'s optimal policy therefore reduces to
\[
C_n(x)=\left\{
\begin{array}{lrr}
K-vx+G_n(x+B)						&\quad x\leq s_m-B			&\quad \mbox{(saturated)}\\
K+\min_{x\leq y\leq x+B} \{G_n(y) -vx\}	&\quad s_m-B< x\leq s_m		&\quad \mbox{(unsaturated or saturated)}\\
-vx+G_n(x)						&\quad x> s_m				&\quad \mbox{(no order)},
\end{array}
\right.
\]
which is equivalent to \citeauthor{Chen2004The}'s X-Y band. 
\end{proof}

\begin{corollary}
If the continuous order property holds, the $(s_k, S_k)$ policy generalises the $(s,S)$ policy discussed in \citep{Scarf1960}.
\end{corollary}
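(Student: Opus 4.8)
The plan is to recognise that Scarf's setting is precisely the non-binding-capacity limit of the present model, $B=\infty$, and to show that in this regime every order targets the same global minimiser, so that Eq.~\eqref{eq:Cn_policy_skSk} collapses to a single reorder-point/order-up-to-level pair. When $B=\infty$ the inner minimisation in Eq.~\eqref{eq:Cn} ranges over all $y\geq x$, so that $C_n=\widehat{C}_n$ and $G_n=\widehat{G}_n$; hence $G_n$ is $K$-convex in the sense of \cite{Scarf1960}, and the orderable region is the half-line $(-\infty,s_m]$, which trivially satisfies the continuous order property assumed in the hypothesis.

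First I would locate the order-up-to level. By Lemma \ref{lemma:sm_Sm_cost_minimal_maximal}(a) we always have $S_m\leq S^*$, and its proof yields equality whenever $s_m\geq S^*-B$; since $B=\infty$ this is automatic, so $S_m=S^*=\arg\min_y G_n(y)$ and, because $S_m>s_m$ by Lemma \ref{lemma:s_m}, also $s_m<S^*$. Next, for any orderable inventory level $x\leq s_m$ the ordering window is $(x,x+B]=(x,\infty)$, which contains the global minimiser $S^*$; therefore $\arg\min_{y\in(x,x+B]}G_n(y)=S^*=S_m$, and by the expression $C_n(x)=-vx+\min\{G_n(x),K+\min_{x\leq y\leq x+B}G_n(y)\}$ of Lemma \ref{lemma:Cn_general_form} the optimal order-up-to level at \emph{every} such $x$ is $S_m$. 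In other words, only one order-up-to level is ever used.

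Finally I would read off the degeneration of Eq.~\eqref{eq:Cn_policy_skSk}. For each $k<m$ the reorder point $s_k=b_k-B$ of Definition \ref{def:bk_sk} and the breakpoint $S_k-B$ both recede to $-\infty$ as $B\to\infty$, so the saturated interval $(s_{k-1},S_k-B]$ and the unsaturated interval $(S_k-B,s_k]$ are empty; the only surviving ordering branch is the unsaturated interval of the last pair, $(S_m-B,s_m]=(-\infty,s_m]$, on which one orders up to $S_m=S^*$. Setting $s\triangleq s_m$ and $S\triangleq S^*$, the rule becomes: order up to $S$ when $x<s$, and do not order otherwise, which is exactly Scarf's $(s,S)$ policy. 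The main obstacle is the second step — establishing that every order goes to the single global minimiser — which is precisely where non-binding capacity is needed, since it guarantees that $S^*$ is always reachable and hence that the intermediate $(s_k,S_k)$ pairs carry empty ordering intervals.
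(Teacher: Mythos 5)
Your proposal is correct and follows essentially the same route as the paper's own (much terser) proof: the paper simply observes that when $B=\infty$ we have $S_m-B=-\infty$, so Lemma \ref{lemma:Cn_general_form} collapses to a single reorder threshold and order-up-to-level. Your additional steps --- identifying $S_m=S^*$ via Lemma \ref{lemma:sm_Sm_cost_minimal_maximal}(a) and verifying that the intermediate $(s_k,S_k)$ intervals are empty --- are a faithful elaboration of the same argument rather than a different one.
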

\begin{proof}
When $B=\infty$, $S_m-B=-\infty$, and from Lemma \ref{lemma:Cn_general_form} it is clear that the optimal policy must feature a single reorder threshold $s$ and order-up-to-level $S$. 
\end{proof}

In Fig. \ref{fig:k_convexity_capacity} we illustrate $G_n(y)$ for different ordering capacities ($B\in\{35,65,71,\infty\}$) imposed for the problem in Example \ref{sec:numerical_example}.
\begin{figure}
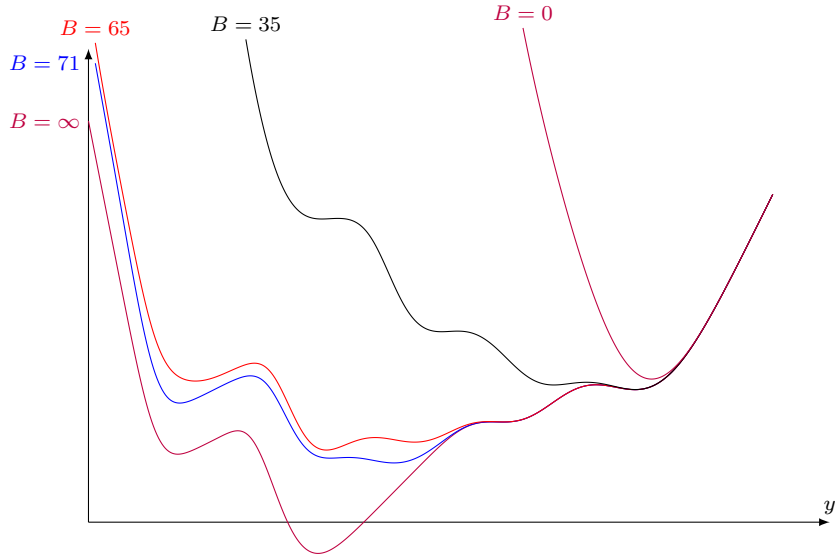

\centering
\include{numerical_example_4}
\caption{Numerical example illustrating $G_n(y)$ for different ordering capacities}
\label{fig:k_convexity_capacity}
\end{figure}
\begin{table}
\centering
\begin{tabular}{rcccccccc}
\toprule
			&\multicolumn{8}{c}{$B$}\\
\cmidrule(l{2pt}r{2pt}){2-9}
			        &\multicolumn{2}{c}{$35$}	&\multicolumn{2}{c}{$65$}	&\multicolumn{2}{c}{$71$}       &\multicolumn{2}{c}{$\infty$}\\
                    \cmidrule(l{2pt}r{2pt}){2-3}\cmidrule(l{2pt}r{2pt}){4-5}\cmidrule(l{2pt}r{2pt}){6-7}\cmidrule(l{2pt}r{2pt}){8-9}
Period			    &$s_k$	&$S_k$			    &$s_k$	&$S_k$			    &$s_k$	&$S_k$                  &$s_k$	&$S_k$\\	
\multirow{3}{*}{1}	&39  		&68				&-11		&31				&-16	&27                     &15	&67\\
				    &46   		&81				&14		    &70				&7		&71                     &	&\\
				    &   		&				&		    &				&13		&84                     &	&\\
\cmidrule(l{2pt}r{2pt}){1-1}	\cmidrule(l{2pt}r{2pt}){2-3}	\cmidrule(l{2pt}r{2pt}){4-5} \cmidrule(l{2pt}r{2pt}){6-7}\cmidrule(l{2pt}r{2pt}){8-9}
\multirow{3}{*}{2}	&64   		&99				&-5		    &51				&27		&76                     &28	&49\\
				    &   		&				&28		    &82				&34		&105                    &	&\\
				    &   		&				&35		    &100			&		&                       &	&\\
\cmidrule(l{2pt}r{2pt}){1-1}	\cmidrule(l{2pt}r{2pt}){2-3}	\cmidrule(l{2pt}r{2pt}){4-5} \cmidrule(l{2pt}r{2pt}){6-7}\cmidrule(l{2pt}r{2pt}){8-9}
\multirow{2}{*}{3}	&61   		&96				&18		    &71				&12		&71                     &55	&109\\
				    &   		&				&55		    &109			&55		&109                    &	&\\
\cmidrule(l{2pt}r{2pt}){1-1}	\cmidrule(l{2pt}r{2pt}){2-3}	\cmidrule(l{2pt}r{2pt}){4-5} \cmidrule(l{2pt}r{2pt}){6-7}\cmidrule(l{2pt}r{2pt}){8-9}
4				    &28   		&49				&28		    &49				&28		&49                     &28	&49\\
\bottomrule
\end{tabular}
\caption{Optimal $(s_k,S_k)$ ordering policy under ordering capacity constraints ($B\in\{35,65,71,\infty\}$) for our numerical example; in all cases the continuous order property holds.}
\label{tab:optimal_sk_Sk_policy}
\end{table}
The optimal $(s_k,S_k)$ ordering policy under ordering capacity constraints for our numerical example is shown in Table \ref{tab:optimal_sk_Sk_policy}, and in Fig. \ref{fig:capacitated_policy_1} for the case in which $B=65$. 

\begin{figure}
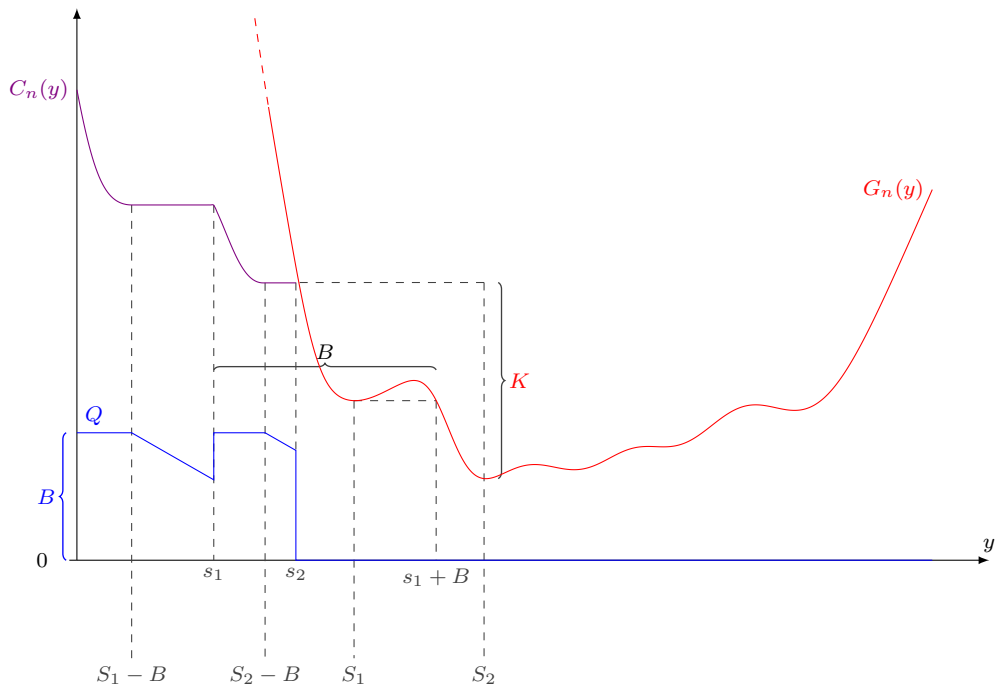

\centering
\include{numerical_example_3}
\caption{Optimal ordering policy in period 1 when $B=65$; note that $G_n(y)$ and $Q$ are not plotted according to the same vertical scale.}
\label{fig:capacitated_policy_1}
\end{figure}

In Appendix \ref{sec:appendix_4} we characterise the structure of the optimal policy for the open numerical example in \citep[][p. 1015]{Chen1996X}, for which the continuous order property holds.

\section{A counterexample}\label{sec:counterexample}

The continuous order property in Definition \ref{def:always_order} has been originally conjectured by \cite{Gallego2000Capacitated}, and it was later further investigated by \cite{Chan2003A}. \cite{Gallego2000Capacitated} wrote:
\begin{quote}
A number of problems still remain. The most vexing is the possibility that under the current structure there could exist a number of intervals [...] where it is optimal to start and stop ordering. An optimal policy with a single continuous interval over which ordering is prescribed, as was found for all of the cases tested [...], is much more analytically appealing. [...]
Unfortunately, the proof of this has thus far eluded us. It should be mentioned that it is likewise possible, although we believe it unlikely, that such a structure simply does not exist. To show this requires a problem instance in which the optimal policy has multiple disjoint intervals in which ordering is optimal. Our computational study suggests that this is not the case.
\end{quote}
\cite{Chan2003A} wrote:
\begin{quote}
If our conjecture [the continuous order property] holds, the computational time for obtaining the optimal ordering policy parameters can be further reduced [...]. We can only show that this conjecture holds for a special case where [the capacity] is large enough [...]. It should be an interesting problem for researchers to prove or disprove the conjecture is true for small [capacity].
\end{quote}
In the rest of this section, we introduce a numerical instance that violates the continuous order property. To the best of our knowledge, no such instance has ever been discussed in the literature.

\begin{example}\label{example:counterexample}
Consider a planning horizon of $n=4$ periods and a nonstationary demand $d_t$ distributed in each period $t$ according to the probability mass function shown in Table \ref{tab:counterexample}. Other problem parameters are $K=250$, $B=41$, $h=1$ and $p=26$ and $v=0$.
\end{example}

\begin{table}
\centering
\begin{tabular}{lrrrrrrrrrrr}
\toprule
$d_1$	&34 (0.018)	&159 (0.888)	&281 (0.046)	&286 (0.048)\\
$d_2$	&14 (0.028)	&223 (0.271)	&225 (0.170)	&232 (0.531)\\
$d_3$	&5 (0.041)		&64 (0.027)	&115 (0.889)	&171 (0.043)	\\
$d_4$	&35 (0.069)	&48 (0.008)	&145 (0.019)	&210 (0.904)	\\
\bottomrule
\end{tabular}
\caption{Probability mass functions of the nonstationary demand $d_t$ considered in Example \ref{example:counterexample}.}
\label{tab:counterexample}
\end{table}

In Table \ref{tab:counterexample_opt} we report an extract of the tabulated optimal policy in which the continuous order property is violated (Fig. \ref{fig:CnMinusGn_violated_9c}).

\begin{table}[ht!]
\centering
\begin{tabular}{lrrrrrrrrrr}
\toprule
Starting inventory level	&593&594&595&596&597&598&599&600&601\\
Optimal order quantity	&41	&40	&39	&38	&37	&36	&35	&34	&33\\
\midrule
Starting inventory level	&602&603&604&605&606&607&608&609&610\\
Optimal order quantity	&0	&0	&0	&0	&0	&0	&0	&0	&0	\\
\midrule
Starting inventory level	&611&612&613&614&615&616&617&618&619\\
Optimal order quantity	&0	&0	&0	&0	&0	&41	&41	&41	&0	\\
\bottomrule
\end{tabular}
\caption{An extract of the optimal policy for period $t=1$ of Example \ref{example:counterexample} in which the continuous order property is violated.}
\label{tab:counterexample_opt}
\end{table}

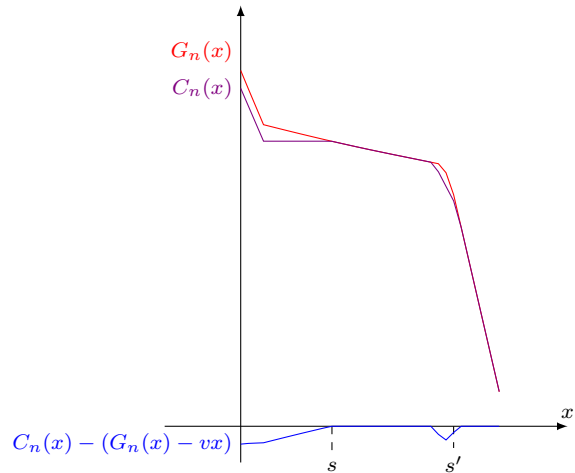
\begin{figure}
\centering
\begin{tikzpicture}[x=0.1cm, y=0.024cm]
\footnotesize
\draw [-latex] ([xshift=-0mm] 20,0) -- ([xshift=3mm] 70.0,0) node[above] {$x$};
\draw [-latex] ([yshift=-0mm] 30,-20.0) -- ([yshift=3mm] 30, 220.0);

\draw [color=red, mark= , style=solid] plot coordinates {%
(590-560,1896.5770144277064-1700)
(591-560,1886.451328561398-1700)
(592-560,1876.533038984386-1700)
(593-560,1866.6158997800662-1700)
(594-560,1865.558889256914-1700)
(595-560,1864.503828851874-1700)
(596-560,1863.458027640642-1700)
(597-560,1862.4261471410919-1700)
(598-560,1861.3992895873341-1700)
(599-560,1860.395039394536-1700)
(600-560,1859.39556034604-1700)
(601-560,1858.405670841752-1700)
(602-560,1857.43030729748-1700)
(603-560,1856.4833749713682-1700)
(604-560,1855.5442740257363-1700)
(605-560,1854.605526114392-1700)
(606-560,1853.6903413374798-1700)
(607-560,1852.7847771081201-1700)
(608-560,1851.90888023684-1700)
(609-560,1851.0329833655603-1700)
(610-560,1850.1573133629681-1700)
(611-560,1849.30610490764-1700)
(612-560,1848.454896452312-1700)
(613-560,1847.6036879969843-1700)
(614-560,1846.7524795416557-1700)
(615-560,1845.901507818872-1700)
(616-560,1845.0505360960879-1700)
(617-560,1840.0721441366481-1700)
(618-560,1827.956954866016-1700)
(619-560,1809.813998519816-1700)
(620-560,1791.6710421736163-1700)
(621-560,1773.5280858274164-1700)
(622-560,1755.3851294812162-1700)
(623-560,1737.242173135016-1700)
(624-560,1719.099216788816-1700)
};

 \draw[color=red] (30, 1896.5770144277064-1700) node[above left] {$G_n(x)$};

  \draw [color=violet, mark= , style=solid] plot coordinates 
 {%
(590-560,1886.7126431630963-1700)
(591-560,1876.977416991096-1700)
(592-560,1867.238197259032-1700)
(593-560,1857.4936513317684-1700)
(594-560,1857.4936513317684-1700)
(595-560,1857.4936513317684-1700)
(596-560,1857.4936513317684-1700)
(597-560,1857.4936513317684-1700)
(598-560,1857.4936513317684-1700)
(599-560,1857.4936513317684-1700)
(600-560,1857.4936513317684-1700)
(601-560,1857.4936513317684-1700)
(602-560,1857.43030729748-1700)
(603-560,1856.4833749713682-1700)
(604-560,1855.5442740257363-1700)
(605-560,1854.605526114392-1700)
(606-560,1853.6903413374798-1700)
(607-560,1852.7847771081201-1700)
(608-560,1851.90888023684-1700)
(609-560,1851.0329833655603-1700)
(610-560,1850.1573133629681-1700)
(611-560,1849.30610490764-1700)
(612-560,1848.454896452312-1700)
(613-560,1847.6036879969843-1700)
(614-560,1846.7524795416557-1700)
(615-560,1845.901507818872-1700)
(616-560,1840.5854334929018-1700)
(617-560,1832.5032933370223-1700)
(618-560,1824.520638236502-1700)
(619-560,1809.813998519816-1700)
(620-560,1791.6710421736163-1700)
(621-560,1773.5280858274164-1700)
(622-560,1755.3851294812162-1700)
(623-560,1737.242173135016-1700)
(624-560,1719.099216788816-1700)
 };
 
 \draw[color=violet] (30, 1886.7126431630963-1700) node[left] {$C_n(x)$};

\draw [color=black, mark= , style=dashed] (618-560,0) -- (618-560,-12) node[below] {$s'$};
\draw [color=black, mark= , style=dashed] (602-560,0) -- (602-560,-15) node[below] {$s$};

\draw [color=blue, mark= , style=solid,] plot coordinates {
(590-560,-9.8643712646101)
(591-560,-9.47391157030188)
(592-560,-9.29484172535399)
(593-560,-9.122248448297796)
(594-560,-8.065237925145539)
(595-560,-7.010177520105572)
(596-560,-5.964376308873625)
(597-560,-4.932495809323427)
(598-560,-3.905638255565691)
(599-560,-2.901388062767637)
(600-560,-1.9019090142714958)
(601-560,-0.9120195099835655)
(602-560,0.0)
(603-560,0.0)
(604-560,0.0)
(605-560,0.0)
(606-560,0.0)
(607-560,0.0)
(608-560,0.0)
(609-560,0.0)
(610-560,0.0)
(611-560,0.0)
(612-560,0.0)
(613-560,0.0)
(614-560,0.0)
(615-560,0.0)
(616-560,-4.4651026031860965)
(617-560,-7.568850799625807)
(618-560,-3.4363166295140672)
(619-560,0.0)
(620-560,0.0)
(621-560,0.0)
(622-560,0.0)
(623-560,0.0)
(624-560,0.0)
};

\draw[color=blue] (30, -9.8643712646101) node[left] {$C_n(x)-(G_n(x)-vx)$};
  
\end{tikzpicture}
\caption{Lemma \ref{lemma:cop_convex_set} does not hold in the context of Example \ref{example:counterexample}: $\{x|C_n(x)-(G_n(x)-vx)<0\}$ is not a convex set; hence the continuous order property is violated and $s<s'$.}
\label{fig:CnMinusGn_violated_9c}
\end{figure}

Our numerical example confirms that it is possible to construct instances for which it is optimal to start and stop ordering, and that the continuous order property conjectured in \citep{Gallego2000Capacitated,Chan2003A} does not hold for the general case of the stochastic inventory problem under order quantity capacity constraints. In Appendix \ref{sec:appendix_6} we discuss the rationale underpinning the generation of our counterexample.

\section{Computational study}\label{sec:computational_study}

Albeit in the previous section we demonstrated that the it is possible to construct instances for which the continuous order property does not hold, we must underscore that these instances are incredibly rare. This is also the reason why the conjecture in \citep{Gallego2000Capacitated,Chan2003A} remained open for over twenty years. In this section, we consider an extensive test bed comprising a broad family of demand distributions and problem parameters; our aim is threefold.

First, we aim to show empirically that, in practice, instances that violate the continuous order property are extremely rare. In turn, this means that the plans generated by the modified multi-$(s,S)$ ordering policy can be considered, for all practical purposes, optimal.

Second, the modified multi-$(s,S)$ ordering policy may feature, in each period, a variable number of thresholds $s_k$ and associated order-up-to-levels $S_k$. In our computational study, we show that the number of thresholds in a modified multi-($s,S$) policy is typically very low in each period. This means that operating this policy is generally not too cumbersome, as the decision maker only needs to track a few (usually less than 5) reorder thresholds and associated order up to levels.

Finally, as shown in Table \ref{tab:optimal_sk_Sk_policy_heuristic}, a modified ($s,S$) policy \citep{Wijngaard1972} with parameters $(s_m,S_m)$ appears to perform well in the context of Example \ref{sec:numerical_example}; in our study we proceed to show that this simple policy, which has been known for decades, also performs well in practice across all instances considered.  

\begin{table}
\centering
\begin{tabular}{rcccccc}
\toprule
            &\multicolumn{6}{c}{$B$}\\
\cmidrule(l{2pt}r{2pt}){2-7}
			&\multicolumn{2}{c}{$35$}	&\multicolumn{2}{c}{$65$}	&\multicolumn{2}{c}{$71$}\\
	        \cmidrule(l{2pt}r{2pt}){2-3}\cmidrule(l{2pt}r{2pt}){4-5}\cmidrule(l{2pt}r{2pt}){6-7}
Period			&$s_m$	&$S_m$			&$s_m$	&$S_m$			&$s_m$	&$S_m$\\
1				&46		&81				&14		&70				&13		&84\\		
2				&64		&99				&35		&100				&34		&105\\			
3				&61		&96				&55		&109				&55		&109\\
4				&28		&49				&28		&49				&28		&49\\		
\cmidrule(l{2pt}r{2pt}){1-1}	\cmidrule(l{2pt}r{2pt}){2-3}	\cmidrule(l{2pt}r{2pt}){4-5}	\cmidrule(l{2pt}r{2pt}){6-7}		
Optimality gap (\%)	&\multicolumn{2}{c}{0.000}	&\multicolumn{2}{c}{0.123}	&\multicolumn{2}{c}{0.192}\\
\bottomrule
\end{tabular}
\caption{Modified $(s,S)$ ordering policies for Example \ref{sec:numerical_example} when $B\in\{35,65,71\}$.}
\label{tab:optimal_sk_Sk_policy_heuristic}
\end{table}

\subsection{Test bed}\label{sec:test_bed}

In our test bed, the planning horizon comprises $n=20$ periods. We consider 10 different patterns for the expected value of the demand in each period, as shown in Fig. \ref{fig:demand}: 2 life cycle patterns (LCY1 and LCY2), 2 sinusoidal patterns (SIN1 and SIN2), 1 stationary pattern (STA), 1 random pattern (RAND), and 4 empirical patterns (EMP1, EMP2, EMP3, EMP4) derived from demand data in \citep{Kurawarwala_1996}. Further details of expected demand rates in each period are given in Table \ref{table:demand_data} in  Appendix \ref{sec:appendix_3}. 
\begin{figure}
\centering
\includegraphics[scale=0.6]{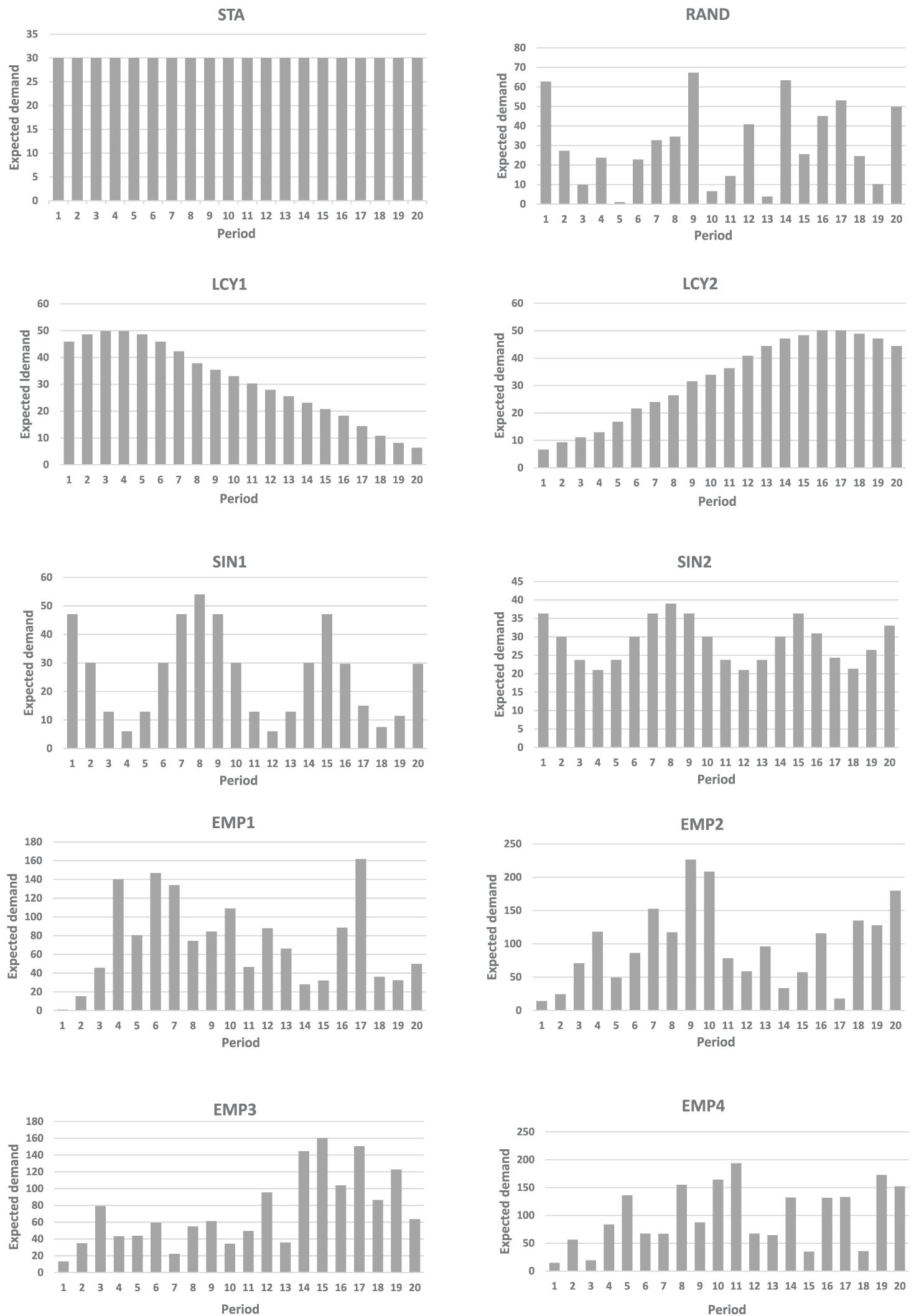}
\caption{Demand patterns in our computational study}\label{fig:demand}
\end{figure}

We consider a broad family of demand distributions commonly used in practice: discrete uniform, geometric, Poisson, normal, lognormal, and gamma. Demands in different periods are assumed to be mutually independent. More specifically, let $\mu_t$ denote the mean demand in period $t$, we investigate a demand that follows a discrete uniform distribution in $[0,2\mu_t)$; a demand that follows a geometric distribution with mean $\mu_t$; and a demand that follows a Poisson distribution with rate $\mu_t$. Finally, given the coefficient of variation of the demand in each period $c_v=\sigma_t/\mu_t$, where $\sigma_t$ is the standard deviation of the demand in period $t$; we consider a demand that follows a normal, a lognormal, and a gamma distribution with mean $\mu_t$ and standard deviation $\sigma_t$.

Fixed ordering cost $K$ takes values in $\{250, 500, 1000\}$; inventory holding cost $h$ is 1; unit variable ordering cost $v$ takes values in $\{2, 5, 10\}$; unit penalty cost $p$ ranges in $\{5, 10, 15\}$. For the case of normal, lognormal, and gamma distributed demand, the coefficient of variation takes values in $\{0.1,0.2,0.3\}$.
Let $D$ denote the average demand rate over the whole $n$ periods horizon for a given demand pattern; the maximum order quantity $B$ takes values in $\{\text{round}(2D)$, $\text{round}(3D)$, $\text{round}(4D)\}$, where the $\text{round}$ operator rounds the value to the nearest integer.  

Since we adopt a full factorial design, we consider 810 instances for discrete uniform, geometric, and Poisson distributed demand, respectively; and 2430 instances for normal, lognormal, and gamma distributed demands, respectively, since in these latter cases we must also consider the three levels of the coefficient of variation. In total, our computational study comprises 9720 instances.

\subsection{Results}\label{sec:results}

We run experiments on an Intel(R) Xeon(R) @ 3.5GHz with 16Gb of RAM.
\footnote{
The library used in our experiments is \texttt{jsdp}.
The Java code is available on \url{http://gwr3n.github.io/jsdp/}.
A self-contained Python code is also available on \url{https://github.com/gwr3n/inventoryanalytics}.}
SDP state space boundaries are fixed --- inventory may range in $(-10000, 10000)$ --- and in all cases we adopt a unit discretization, therefore running time for each instance is constant; a continuity correction is introduced for continuous distributions. Monte Carlo simulation runs are determined by targeting an estimation error of 0.01\% for the mean estimated at 95\% confidence level; we adopt a common random number strategy \citep{Kahn1953} across all instances.

\begin{table}
\scriptsize
\centering
\begin{tabular}{llrrcr}
\toprule
\multicolumn{2}{c}{}&\multicolumn{2}{c}{modified $(s, S)$} & \multicolumn{1}{c}{modified multi-$(s, S)$}\\
\multicolumn{2}{c}{}& \multicolumn{2}{c}{\% optimality gap} & \multicolumn{1}{c}{max thresholds} & \multicolumn{1}{c}{instances}	\\
\cmidrule(l{2pt}r{2pt}){3-4} \cmidrule(l{2pt}r{2pt}){5-5} \cmidrule(l{2pt}r{2pt}){6-6} \multicolumn{2}{c}{}
				&avg			&max		\\
\multirow{3}{*}{$K$}										
&	250	&	0.004	&	0.122	&	3	&	270	\\
&	500	&	0.000	&	0.050	&	3	&	270	\\
&	1000	&	0.000	&	0.007	&	3	&	270	\\
\hline										
\multirow{3}{*}{$v$}										
&	2	&	0.002	&	0.122	&	3	&	270	\\
&	5	&	0.000	&	0.057	&	3	&	270	\\
&	10	&	0.000	&	0.032	&	3	&	270	\\
\hline										
\multirow{3}{*}{$p$}										
&	5	&	0.000	&	0.057	&	3	&	270	\\
&	10	&	0.001	&	0.122	&	3	&	270	\\
&	15	&	0.001	&	0.115	&	3	&	270	\\
\hline										
\multirow{3}{*}{$B$}										
&	2.0D	&	0.000	&	0.047	&	2	&	270	\\
&	3.0D	&	0.001	&	0.122	&	3	&	270	\\
&	4.0D	&	0.000	&	0.062	&	3	&	270	\\
\hline										
\multirow{10}{*}{Demand}										
&	EMP1	&	0.002	&	0.122	&	3	&	81	\\
&	EMP2	&	0.003	&	0.044	&	3	&	81	\\
&	EMP3	&	0.003	&	0.039	&	3	&	81	\\
&	EMP4	&	0.003	&	0.057	&	3	&	81	\\
&	LC1	&	0.000	&	0.000	&	1	&	81	\\
&	LC2	&	0.002	&	0.008	&	1	&	81	\\
&	RAND	&	0.006	&	0.018	&	2	&	81	\\
&	SIN1	&	0.000	&	0.002	&	3	&	81	\\
&	SIN2	&	0.000	&	0.001	&	1	&	81	\\
&	STA	&	0.000	&	0.001	&	1	&	81	\\
\hline										
Overall 	&	&	0.000	&	0.122	&	3	&	810	\\
\bottomrule
\end{tabular}
\caption{Pivot table for our computational study: discrete uniform demand.}
\label{table:results_discrete_uniform}
\end{table}

\begin{table}
\scriptsize
\centering
\begin{tabular}{llrrcr}
\toprule
\multicolumn{2}{c}{}&\multicolumn{2}{c}{modified $(s, S)$} & \multicolumn{1}{c}{modified multi-$(s, S)$}\\
\multicolumn{2}{c}{}& \multicolumn{2}{c}{\% optimality gap} & \multicolumn{1}{c}{max thresholds} & \multicolumn{1}{c}{instances}	\\
\cmidrule(l{2pt}r{2pt}){3-4} \cmidrule(l{2pt}r{2pt}){5-5} \cmidrule(l{2pt}r{2pt}){6-6} \multicolumn{2}{c}{}
				&avg			&max		\\
\multirow{3}{*}{$K$}										
&	250	&	0.274	&	0.610	&	3	&	270	\\
&	500	&	0.238	&	0.530	&	3	&	270	\\
&	1000	&	0.200	&	0.427	&	2	&	270	\\
\hline										
\multirow{3}{*}{$v$}										
&	2	&	0.284	&	0.610	&	3	&	270	\\
&	5	&	0.235	&	0.478	&	3	&	270	\\
&	10	&	0.193	&	0.383	&	3	&	270	\\
\hline										
\multirow{3}{*}{$p$}										
&	5	&	0.174	&	0.368	&	2	&	270	\\
&	10	&	0.242	&	0.510	&	3	&	270	\\
&	15	&	0.296	&	0.610	&	2	&	270	\\
\hline										
\multirow{3}{*}{$B$}										
&	2.0D	&	0.279	&	0.610	&	2	&	270	\\
&	3.0D	&	0.234	&	0.495	&	2	&	270	\\
&	4.0D	&	0.199	&	0.416	&	3	&	270	\\
\hline										
\multirow{10}{*}{Demand}										
&	EMP1	&	0.279	&	0.596	&	2	&	81	\\
&	EMP2	&	0.280	&	0.602	&	3	&	81	\\
&	EMP3	&	0.240	&	0.488	&	2	&	81	\\
&	EMP4	&	0.272	&	0.610	&	2	&	81	\\
&	LC1	&	0.241	&	0.573	&	1	&	81	\\
&	LC2	&	0.205	&	0.435	&	1	&	81	\\
&	RAND	&	0.239	&	0.501	&	2	&	81	\\
&	SIN1	&	0.224	&	0.490	&	2	&	81	\\
&	SIN2	&	0.199	&	0.451	&	1	&	81	\\
&	STA	&	0.196	&	0.452	&	1	&	81	\\
\hline										
Overall 	&	&	0.237	&	0.610	&	3	&	810	\\
\bottomrule
\end{tabular}
\caption{Pivot table for our computational study: geometric demand.}
\label{table:results_geometric}
\end{table}

\begin{table}
\scriptsize
\centering
\begin{tabular}{llrrcr}
\toprule
\multicolumn{2}{c}{}&\multicolumn{2}{c}{modified $(s, S)$} & \multicolumn{1}{c}{modified multi-$(s, S)$}\\
\multicolumn{2}{c}{}& \multicolumn{2}{c}{\% optimality gap} & \multicolumn{1}{c}{max thresholds} & \multicolumn{1}{c}{instances}	\\
\cmidrule(l{2pt}r{2pt}){3-4} \cmidrule(l{2pt}r{2pt}){5-5} \cmidrule(l{2pt}r{2pt}){6-6} \multicolumn{2}{c}{}
				&avg			&max		\\
\multirow{3}{*}{$K$}										
&	250	&	0.125	&	1.918	&	5	&	270	\\
&	500	&	0.130	&	1.583	&	5	&	270	\\
&	1000	&	0.029	&	0.424	&	5	&	270	\\
\hline										
\multirow{3}{*}{$v$}										
&	2	&	0.146	&	1.918	&	5	&	270	\\
&	5	&	0.086	&	0.972	&	5	&	270	\\
&	10	&	0.052	&	0.650	&	5	&	270	\\
\hline										
\multirow{3}{*}{$p$}										
&	5	&	0.070	&	1.048	&	4	&	270	\\
&	10	&	0.100	&	1.623	&	5	&	270	\\
&	15	&	0.114	&	1.918	&	5	&	270	\\
\hline										
\multirow{3}{*}{$B$}										
&	2.0D	&	0.103	&	1.918	&	4	&	270	\\
&	3.0D	&	0.100	&	1.623	&	4	&	270	\\
&	4.0D	&	0.081	&	1.583	&	5	&	270	\\
\hline										
\multirow{10}{*}{Demand}										
&	EMP1	&	0.204	&	1.623	&	4	&	81	\\
&	EMP2	&	0.176	&	1.918	&	4	&	81	\\
&	EMP3	&	0.181	&	1.479	&	5	&	81	\\
&	EMP4	&	0.248	&	1.583	&	5	&	81	\\
&	LC1	&	0.018	&	0.154	&	4	&	81	\\
&	LC2	&	0.027	&	0.160	&	4	&	81	\\
&	RAND	&	0.043	&	0.429	&	4	&	81	\\
&	SIN1	&	0.016	&	0.088	&	3	&	81	\\
&	SIN2	&	0.017	&	0.106	&	4	&	81	\\
&	STA	&	0.016	&	0.093	&	5	&	81	\\
\hline										
Overall 	&	&	0.095	&	1.918	&	5	&	810	\\
\bottomrule
\end{tabular}
\caption{Pivot table for our computational study: Poisson demand.}
\label{table:results_poisson}
\end{table}

\begin{table}
\scriptsize
\centering
\begin{tabular}{llrrcr}
\toprule
\multicolumn{2}{c}{}&\multicolumn{2}{c}{modified $(s, S)$} & \multicolumn{1}{c}{modified multi-$(s, S)$}\\
\multicolumn{2}{c}{}& \multicolumn{2}{c}{\% optimality gap} & \multicolumn{1}{c}{max thresholds} & \multicolumn{1}{c}{instances}	\\
\cmidrule(l{2pt}r{2pt}){3-4} \cmidrule(l{2pt}r{2pt}){5-5} \cmidrule(l{2pt}r{2pt}){6-6} \multicolumn{2}{c}{}
				&avg			&max		\\
\multirow{3}{*}{$K$}										
&	250	&	0.092	&	2.006	&	5	&	810	\\
&	500	&	0.056	&	1.435	&	6	&	810	\\
&	1000	&	0.015	&	0.565	&	6	&	810	\\
\hline										
\multirow{3}{*}{$v$}										
&	2	&	0.083	&	2.006	&	6	&	810	\\
&	5	&	0.050	&	0.971	&	6	&	810	\\
&	10	&	0.029	&	0.476	&	6	&	810	\\
\hline										
\multirow{3}{*}{$p$}										
&	5	&	0.034	&	0.893	&	5	&	810	\\
&	10	&	0.060	&	1.597	&	6	&	810	\\
&	15	&	0.068	&	2.006	&	6	&	810	\\
\hline										
\multirow{3}{*}{$B$}										
&	2.0D	&	0.050	&	2.006	&	5	&	810	\\
&	3.0D	&	0.068	&	1.597	&	5	&	810	\\
&	4.0D	&	0.045	&	1.435	&	6	&	810	\\
\hline										
\multirow{10}{*}{Demand}										
&	EMP1	&	0.104	&	1.597	&	4	&	243	\\
&	EMP2	&	0.088	&	2.006	&	4	&	243	\\
&	EMP3	&	0.092	&	1.435	&	6	&	243	\\
&	EMP4	&	0.120	&	1.392	&	5	&	243	\\
&	LC1	&	0.016	&	0.357	&	5	&	243	\\
&	LC2	&	0.019	&	0.910	&	6	&	243	\\
&	RAND	&	0.040	&	1.347	&	5	&	243	\\
&	SIN1	&	0.024	&	0.437	&	5	&	243	\\
&	SIN2	&	0.024	&	0.742	&	5	&	243	\\
&	STA	&	0.015	&	0.327	&	5	&	243	\\
\hline										
\multirow{3}{*}{$c_v$}										
&	0.1	&	0.111	&	2.006	&	6	&	810	\\
&	0.2	&	0.047	&	1.237	&	5	&	810	\\
&	0.3	&	0.006	&	0.565	&	4	&	810	\\
\hline										
Overall 	&	&	0.054	&	2.006	&	6	&	2430	\\
\bottomrule
\end{tabular}
\caption{Pivot table for our computational study: normal demand.}
\label{table:results_normal}
\end{table}

\begin{table}
\scriptsize
\centering
\begin{tabular}{llrrcr}
\toprule
\multicolumn{2}{c}{}&\multicolumn{2}{c}{modified $(s, S)$} & \multicolumn{1}{c}{modified multi-$(s, S)$}\\
\multicolumn{2}{c}{}& \multicolumn{2}{c}{\% optimality gap} & \multicolumn{1}{c}{max thresholds} & \multicolumn{1}{c}{instances}	\\
\cmidrule(l{2pt}r{2pt}){3-4} \cmidrule(l{2pt}r{2pt}){5-5} \cmidrule(l{2pt}r{2pt}){6-6} \multicolumn{2}{c}{}
				&avg			&max		\\
\multirow{3}{*}{$K$}										
&	250	&	0.108	&	1.891	&	5	&	810	\\
&	500	&	0.072	&	1.424	&	6	&	810	\\
&	1000	&	0.033	&	0.579	&	6	&	810	\\
\hline										
\multirow{3}{*}{$v$}										
&	2	&	0.099	&	1.891	&	6	&	810	\\
&	5	&	0.067	&	0.931	&	6	&	810	\\
&	10	&	0.047	&	0.729	&	6	&	810	\\
\hline										
\multirow{3}{*}{$p$}										
&	5	&	0.050	&	0.893	&	5	&	810	\\
&	10	&	0.076	&	1.578	&	6	&	810	\\
&	15	&	0.086	&	1.891	&	6	&	810	\\
\hline										
\multirow{3}{*}{$B$}										
&	2.0D	&	0.066	&	1.891	&	5	&	810	\\
&	3.0D	&	0.086	&	1.578	&	5	&	810	\\
&	4.0D	&	0.061	&	1.424	&	6	&	810	\\
\hline										
\multirow{10}{*}{Demand}										
&	EMP1	&	0.130	&	1.578	&	4	&	243	\\
&	EMP2	&	0.100	&	1.891	&	4	&	243	\\
&	EMP3	&	0.103	&	1.424	&	6	&	243	\\
&	EMP4	&	0.139	&	1.285	&	5	&	243	\\
&	LC1	&	0.033	&	0.286	&	5	&	243	\\
&	LC2	&	0.035	&	0.887	&	6	&	243	\\
&	RAND	&	0.056	&	1.262	&	5	&	243	\\
&	SIN1	&	0.042	&	0.600	&	5	&	243	\\
&	SIN2	&	0.041	&	0.695	&	5	&	243	\\
&	STA	&	0.030	&	0.311	&	5	&	243	\\
\hline										
\multirow{3}{*}{$c_v$}										
&	0.1	&	0.110	&	1.891	&	6	&	810	\\
&	0.2	&	0.055	&	0.923	&	5	&	810	\\
&	0.3	&	0.048	&	0.579	&	4	&	810	\\
\hline										
Overall 	&	&	0.071	&	1.891	&	6	&	2430	\\
\bottomrule
\end{tabular}
\caption{Pivot table for our computational study: lognormal demand.}
\label{table:results_lognormal}
\end{table}

\begin{table}
\scriptsize
\centering
\begin{tabular}{llrrcr}
\toprule
\multicolumn{2}{c}{}&\multicolumn{2}{c}{modified $(s, S)$} & \multicolumn{1}{c}{modified multi-$(s, S)$}\\
\multicolumn{2}{c}{}& \multicolumn{2}{c}{\% optimality gap} & \multicolumn{1}{c}{max thresholds} & \multicolumn{1}{c}{instances}	\\
\cmidrule(l{2pt}r{2pt}){3-4} \cmidrule(l{2pt}r{2pt}){5-5} \cmidrule(l{2pt}r{2pt}){6-6} \multicolumn{2}{c}{}
				&avg			&max		\\
\multirow{3}{*}{$K$}										
&	250	&	0.101	&	1.930	&	5	&	810	\\
&	500	&	0.068	&	1.424	&	6	&	810	\\
&	1000	&	0.029	&	0.570	&	6	&	810	\\
\hline										
\multirow{3}{*}{$v$}										
&	2	&	0.094	&	1.930	&	6	&	810	\\
&	5	&	0.062	&	0.923	&	6	&	810	\\
&	10	&	0.043	&	0.731	&	6	&	810	\\
\hline										
\multirow{3}{*}{$p$}										
&	5	&	0.046	&	0.894	&	5	&	810	\\
&	10	&	0.071	&	1.585	&	6	&	810	\\
&	15	&	0.081	&	1.930	&	6	&	810	\\
\hline										
\multirow{3}{*}{$B$}										
&	2.0D	&	0.062	&	1.930	&	5	&	810	\\
&	3.0D	&	0.080	&	1.585	&	5	&	810	\\
&	4.0D	&	0.057	&	1.424	&	6	&	810	\\
\hline										
\multirow{10}{*}{Demand}										
&	EMP1	&	0.125	&	1.585	&	4	&	243	\\
&	EMP2	&	0.095	&	1.930	&	4	&	243	\\
&	EMP3	&	0.100	&	1.424	&	6	&	243	\\
&	EMP4	&	0.130	&	1.312	&	5	&	243	\\
&	LC1	&	0.029	&	0.308	&	5	&	243	\\
&	LC2	&	0.032	&	0.894	&	6	&	243	\\
&	RAND	&	0.052	&	1.290	&	5	&	243	\\
&	SIN1	&	0.036	&	0.421	&	5	&	243	\\
&	SIN2	&	0.037	&	0.708	&	5	&	243	\\
&	STA	&	0.027	&	0.317	&	5	&	243	\\
\hline										
\multirow{3}{*}{$c_v$}										
&	0.1	&	0.109	&	1.930	&	6	&	810	\\
&	0.2	&	0.050	&	1.013	&	5	&	810	\\
&	0.3	&	0.040	&	0.570	&	4	&	810	\\
\hline										
Overall 	&	&	0.066	&	1.930	&	6	&	2430	\\
\bottomrule
\end{tabular}
\caption{Pivot table for our computational study: gamma demand.}
\label{table:results_gamma}
\end{table}

In Tables \ref{table:results_discrete_uniform}--\ref{table:results_gamma} we present the results of our study for each of the demand distributions under scrutiny.
For all instances investigated, a modified multi-$(s, S)$ policy is optimal. Moreover, the maximum number of thresholds observed in any given period is surprisingly low and never exceeds 6 over the whole test bed. We also report the average and maximum \% optimality gap of a modified $(s, S)$ policy with parameters $(s_m,S_m)$ extracted from the SDP tables. This policy is found to be near optimal in our study, since its average \% optimality gap is consistently negligible, while the maximum \% optimality gap observed never exceeds 2\%.

\section{Conclusions}\label{sec:conclusions}

The periodic review single-item single-stocking location stochastic inventory system under nonstationary demand, complete backorders, a fixed ordering cost component, and order quantity capacity constraints is one of the fundamental problems in inventory management.

A long standing open question in the literature is whether a policy with a single continuous interval over which ordering is prescribed is optimal for this problem. The so-called ``continuous order property'' conjecture was originally posited by \cite{Gallego2000Capacitated}, and later also investigated by \cite{Chan2003A}. To the best of our knowledge, to date this conjecture has never been confirmed or disproved.

In this work, we provided a numerical counterexample that violates the continuous order property. This closes a fundamental and long standing problem in the literature: a policy with a single continuous interval over which ordering is prescribed is not optimal.

\cite{Gallego2000Capacitated} provided a partial characterisation of the optimal policy to the problem. In light of the results presented in \citep{Chen2004The}, we showed how to simplify the optimal policy structure presented by \cite{Gallego2000Capacitated}. \cite{Gallego2000Capacitated} also briefly sketched the form that an optimal policy would take under moderate values of $K$. We formalised this discussion and provided a full characterisation of the optimal policy for instances for which the continuous order property holds. In particular, we showed that under this assumption the optimal policy takes the {\em modified multi-}$(s,S)$ {\em form}. 

By leveraging an extensive computational study, we showed that instances violating the continuous order property are extremely rare in practice. The modified multi-$(s,S)$ ordering policy can therefore be considered, for all practical purposes, optimal. Moreover, we observed that the number of thresholds in a modified multi-($s,S$) policy remains low in each period, this means that operating the policy in practice will not result too cumbersome for a manager. Finally, we showed that a well-known heuristic policy which has been known for decades, the modified ($s,S$) policy \citep{Wijngaard1972}, also performs well in practice across all instances considered.  

Since a policy with a single continuous interval over which ordering is prescribed is not optimal in general, future works may focus on establishing what restrictions (if any) to the problem statement, e.g. nature of the demand distribution, may ensure that a policy with a single continuous interval over which ordering is prescribed is optimal.

\section{Acknowledgments}

The authors would like to thank the China Scholarship Council (CSC) for the financial support provided to Z. Chen under the CSC Postgraduate Study Abroad Program.

\begin{appendices}

\section{Possible scenarios one may observe when inventory hits level $s_m$}\label{sec:appendix_1}

There are two possible cases one may encounter when inventory hits reorder threshold $s_m$: either we order less than $B$, or we order the maximum allowed quantity $B$. We next illustrate these two possible cases via Example \ref{sec:numerical_example}.

{\bf Case 1:} The first case ($B=65$) is shown in Fig. \ref{fig:capacitated_policy_1}.
In this case there are $m=2$ local minima up to (and including) the global minimizer $S_m$. Let $y$ denote the initial inventory and apply Eq. \eqref{eq:optimal_policy_cap}. Since $s_2+B\geq S_2$, if $s_1<y<s_2$ we order $x=\min\{S_2-y, B\}$; if $y<s_1$ we order $x=\min\{S_1-y, B\}$. Finally, if $y\geq s_2$, we do not order.

{\bf Case 2:} The second case ($B=71$) is shown in Fig. \ref{fig:capacitated_policy_2}.
\begin{figure}
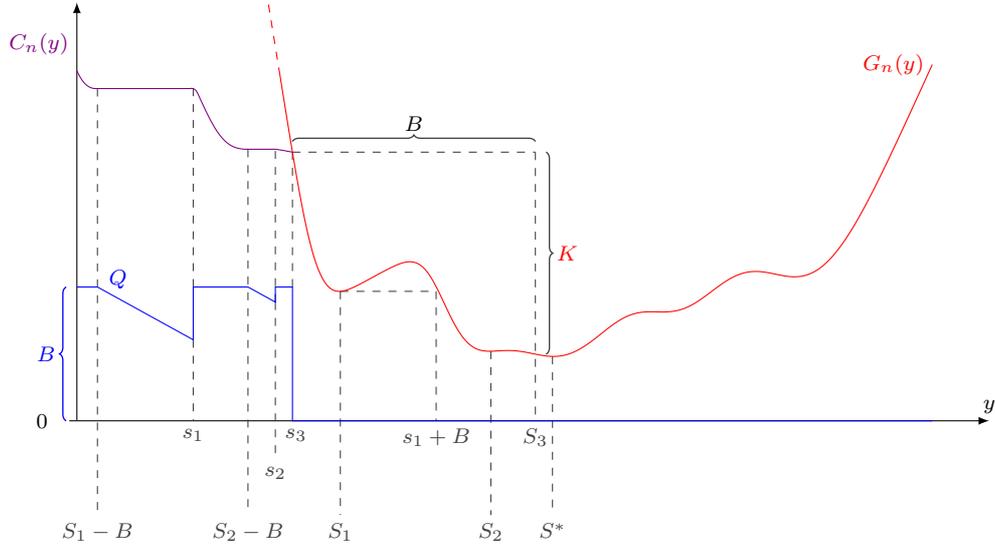

\centering
\include{numerical_example_5}
\caption{Optimal ordering policy in period 1 when $B=71$; note that $G_n(y)$ and $Q$ are not plotted according to the same vertical scale. If $y>s_3$, it is not convenient to order.}
\label{fig:capacitated_policy_2}
\end{figure}
In this case there are $m=3$ local minima up to (and including) the global minimizer $S^*$. Let $y$ denote the initial inventory and apply Eq. \eqref{eq:optimal_policy_cap}. Since capacity $B$ is insufficient to reach the global minimizer $S^*$, if $s_2<y<s_3$ we order $x=S_3-s_3=B$; if $s_1<y<s_2$ we order $x=\min\{S_2-y, B\}$; and if $y<s_1$, we order $x=\min\{S_1-y, B\}$. Finally, if $y\geq s_3$, we do not order.

These two cases exhaust all possible scenarios one may observe when inventory hits level $s_m$.

\section{Numerical example illustrating Lemma \ref{lemma:local_minima}}\label{sec:appendix_5}
\begin{example}\label{example:local_minimum}
Consider a planning horizon of $n=12$ periods; a demand $d_t$ distributed in each period $t=1,\ldots,n$ according to a Poisson law with rate $\lambda_t\in\{151, 152, 58, 78, 134, 13, 22, 161, 43, 55, 110, 37\}$; $K=494$, $v=0$, $h=1$, $p=15$, and $B=128$. 
\end{example}
We focus on period 6, and in Fig. \ref{fig:local_minima_domination} we plot $G_6(y)$ for an initial inventory $y\in(60,145)$. It is clear that at any point $x_0$ in which it is optimal to place an order, if we have sufficient capacity to order beyond $b_1$, we should do so; however, if we do not have sufficient capacity, then we would never order up to $S$, as this point is clearly dominated by $\widehat{S}$. Observe that while $\widehat{S}$ belongs to the QCE of $G_6$ --- illustrated as a dashed line where it departs from $G_6$ --- $S$ does not.
\begin{figure}
\centering
\begin{tikzpicture}[x=0.0380952380952381cm, y=0.035cm]
\footnotesize
\draw [-latex] ([xshift=-0mm] 60.0,0) -- ([xshift=3mm] 210.0,0) node[above] {$y$};
\draw [-latex] ([yshift=-0mm] 60,0.0) -- ([yshift=3mm] 60, 100.0);

\draw [color=black, mark= , style=dotted] (75,1766.5536359408084-1700) -- (75,-5) node[below] {$\widehat{S}$};
\draw [color=black, mark= , style=dotted] (132,1766.1984416651057-1700) -- (132,-5) node[below] {$b_1$};

\draw [color=black, mark= , style=dotted] (101,1767.8614598966587-1700) -- (101,1767.8614598966587-1700+10) node[above] {$S$};

\draw [smooth, color=red, mark= , style=solid] plot coordinates {%
(60, 1795.5301261731734-1700)
(61, 1790.7091494746644-1700)
(62, 1786.4394635524616-1700)
(63, 1782.6912271889444-1700)
(64, 1779.4332865456925-1700)
(65, 1776.6334521779772-1700)
(66, 1774.2587782419914-1700)
(67, 1772.2758402485874-1700)
(68, 1770.6510081093616-1700)
(69, 1769.3507116732396-1700)
(70, 1768.3416964300372-1700)
(71, 1767.5912675290576-1700)
(72, 1767.0675206959327-1700)
(73, 1766.7395590051244-1700)
(74, 1766.5776947604993-1700)
(75, 1766.5536359408084-1700)
(76, 1766.640656777305-1700)
(77, 1766.8137520509806-1700)
(78, 1767.0497746373035-1700)
(79, 1767.3275557045722-1700)
(80, 1767.628006827233-1700)
(81, 1767.9342030685352-1700)
(82, 1768.2314459533977-1700)
(83, 1768.5073051195704-1700)
(84, 1768.7516373743897-1700)
(85, 1768.9565819053414-1700)
(86, 1769.1165305090897-1700)
(87, 1769.2280719228554-1700)
(88, 1769.2899096638741-1700)
(89, 1769.302753199625-1700)
(90, 1769.2691827689778-1700)
(91, 1769.193488731765-1700)
(92, 1769.081486916854-1700)
(93, 1768.940312040593-1700)
(94, 1768.778191853254-1700)
(95, 1768.604205218197-1700)
(96, 1768.4280278158349-1700)
(97, 1768.2596695705329-1700)
(98, 1768.1092081996949-1700)
(99, 1767.9865234552424-1700)
(100, 1767.9010366472062-1700)
(101, 1767.8614598966587-1700)
(102, 1767.875559268303-1700)
(103, 1767.949935508349-1700)
(104, 1768.089825602586-1700)
(105, 1768.2989278191128-1700)
(106, 1768.5792523503414-1700)
(107, 1768.9309991463535-1700)
(108, 1769.3524640476116-1700)
(109, 1769.8399738795104-1700)
(110, 1770.3878507591178-1700)
(111, 1770.988405482439-1700)
(112, 1771.6319595097934-1700)
(113, 1772.306894753329-1700)
(114, 1772.9997301018784-1700)
(115, 1773.6952234002026-1700)
(116, 1774.3764974331916-1700)
(117, 1775.0251883460774-1700)
(118, 1775.621614848895-1700)
(119, 1776.1449664934219-1700)
(120, 1776.5735092582836-1700)
(121, 1776.884806618333-1700)
(122, 1777.0559541959292-1700)
(123, 1777.0638259861446-1700)
(124, 1776.8853300113317-1700)
(125, 1776.49767109361-1700)
(126, 1775.8786182592123-1700)
(127, 1775.0067739943584-1700)
(128, 1773.8618425122293-1700)
(129, 1772.4248938555847-1700)
(130, 1770.6786205202093-1700)
(131, 1768.6075831384671-1700)
(132, 1766.1984416651057-1700)
(133, 1763.4401684697036-1700)
(134, 1760.324239771192-1700)
(135, 1756.844801960646-1700)
(136, 1752.998809561556-1700)
(137, 1748.7861318844207-1700)
(138, 1744.2096258526717-1700)
(139, 1739.275173007498-1700)
(140, 1733.991679325312-1700)
(141, 1728.3710371757654-1700)
(142, 1722.4280494761545-1700)
(143, 1716.1803168250485-1700)
(144, 1709.6480890967403-1700)
(145, 1702.8540836308616-1700)
} 
node[above right] {$G_6(y)$};
 
 \draw [smooth, color=red, mark= , style=dashed] plot coordinates {%
(75, 1766.5536359408084-1700)
(76, 1766.5536359408084-1700)
(77, 1766.5536359408084-1700)
(78, 1766.5536359408084-1700)
(79, 1766.5536359408084-1700)
(80, 1766.5536359408084-1700)
(81, 1766.5536359408084-1700)
(82, 1766.5536359408084-1700)
(83, 1766.5536359408084-1700)
(84, 1766.5536359408084-1700)
(85, 1766.5536359408084-1700)
(86, 1766.5536359408084-1700)
(87, 1766.5536359408084-1700)
(88, 1766.5536359408084-1700)
(89, 1766.5536359408084-1700)
(90, 1766.5536359408084-1700)
(91, 1766.5536359408084-1700)
(92, 1766.5536359408084-1700)
(93, 1766.5536359408084-1700)
(94, 1766.5536359408084-1700)
(95, 1766.5536359408084-1700)
(96, 1766.5536359408084-1700)
(97, 1766.5536359408084-1700)
(98, 1766.5536359408084-1700)
(99, 1766.5536359408084-1700)
(100, 1766.5536359408084-1700)
(101, 1766.5536359408084-1700)
(102, 1766.5536359408084-1700)
(103, 1766.5536359408084-1700)
(104, 1766.5536359408084-1700)
(105, 1766.5536359408084-1700)
(106, 1766.5536359408084-1700)
(107, 1766.5536359408084-1700)
(108, 1766.5536359408084-1700)
(109, 1766.5536359408084-1700)
(110, 1766.5536359408084-1700)
(111, 1766.5536359408084-1700)
(112, 1766.5536359408084-1700)
(113, 1766.5536359408084-1700)
(114, 1766.5536359408084-1700)
(115, 1766.5536359408084-1700)
(116, 1766.5536359408084-1700)
(117, 1766.5536359408084-1700)
(118, 1766.5536359408084-1700)
(119, 1766.5536359408084-1700)
(120, 1766.5536359408084-1700)
(121, 1766.5536359408084-1700)
(122, 1766.5536359408084-1700)
(123, 1766.5536359408084-1700)
(124, 1766.5536359408084-1700)
(125, 1766.5536359408084-1700)
(126, 1766.5536359408084-1700)
(127, 1766.5536359408084-1700)
(128, 1766.5536359408084-1700)
(129, 1766.5536359408084-1700)
(130, 1766.5536359408084-1700)
(131, 1766.5536359408084-1700)} ;
  
\end{tikzpicture}
\caption{Example \ref{example:local_minimum}, plot of function $G_6(y)$ for an initial inventory $y\in(60,145)$; the QCE of $G_6$, when it departs from $G_6$, is illustrated as a dashed line. Observe that $\widehat{S}$ belongs to the QCE of $G_6$, while $S$ does not.}
\label{fig:local_minima_domination}
\end{figure}
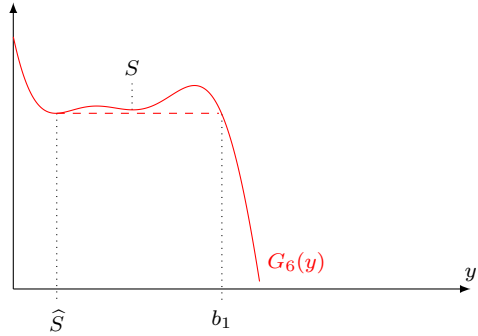 

\section{Example from \cite{Chen1996X}}\label{sec:appendix_4}

We hereby illustrate that an $(s_k,S_k)$ ordering policy is optimal for the numerical example originally presented in \citep[][p. 1015]{Chen1996X} and also investigated in \citep{Chen2004The} under an infinite horizon.

\begin{example}
Consider a planning horizon of $n=20$ periods and a stationary demand $d$ distributed in each period according to the following probability mass function: $\Pr\{d=6\}=0.95$ and $\Pr\{d=7\}=0.05$. Other problem parameters are $K=22$, $B=9$, $h=1$ and $p=10$ and $v=1$; note that, if the planning horizon is sufficiently long, $v$ can be safely ignored. The discount factor is $\alpha=0.9$. 
\end{example}

In Table \ref{tab:shaoxiang_opt} we report the tabulated optimal policy as illustrated in \citep[][p. 417]{Chen2004The}.

\begin{table}
\centering
\begin{tabular}{lrrrrrrrrrrr}
\toprule
Starting inventory level	&-3&-2&-1&0&1&2&3&4&5&6&7\\
Optimal order quantity	&9&8&7&9&8&7&9&8&7&0&0\\
\bottomrule
\end{tabular}
\caption{Optimal policy as illustrated in \citep[][p. 417]{Chen2004The}}
\label{tab:shaoxiang_opt}
\end{table}

In Fig. \ref{fig:shaoxiang_policy} we plot $G_n(y)$ for an initial inventory $y\in(-5,50)$ and $n=20$. The optimal $(s_k,S_k)$ policy is shown in Table \ref{tab:optimal_sk_Sk_policy_shaoxiang}; this is equivalent to the policy illustrated in \citep[][p. 1015]{Chen1996X} and to the stationary policy tabulated in \citep[][p. 417]{Chen2004The}.

\begin{table}
\centering
\begin{tabular}{lc}
\toprule
$s_k$	&$S_k$\\	
\cmidrule(l{2pt}r{2pt}){1-2}
-1		&6\\
2		&9\\
5		&12\\
\bottomrule
\end{tabular}
\caption{Optimal $(s_k,S_k)$ policy for a generic period $t$ of the example in \citep{Chen1996X}.}
\label{tab:optimal_sk_Sk_policy_shaoxiang}
\end{table}

\begin{figure}
\centering
\begin{tikzpicture}[x=0.2cm, y=0.035cm]
\footnotesize
\draw [-latex] ([xshift=-0mm] -5.0,0) -- ([xshift=3mm] 50.0,0) node[above] {$y$};
\draw [-latex] ([yshift=-0mm] -5,0.0) -- ([yshift=3mm] -5, 100.0);
\draw (-5,0.0) -- +(0mm,0mm) -- +(-1mm,0mm) node[left=5pt] {0};

\draw [color=darkgray, mark= , style=dashed] (-3,98.37442975280123) -- (-3,-15) node[below] {$S_1-B$};
\draw [color=darkgray, mark= , style=dashed] (-1,98.37442975280123) -- (-1,0) node[below] {$s_1$};
\draw [color=darkgray, mark= , style=dashed] (6,40.956885426525304) -- (6,-30) node[below] {$S_1$};

\draw [color=darkgray, mark= , style=dashed] (0,98.37442975280123) -- (0,-30) node[below] {$S_2-B$};
\draw [color=darkgray, mark= , style=dashed] (2,98.37442975280123) -- (2,0) node[below] {$s_2$};
\draw [color=darkgray, mark= , style=dashed] (9,145.26820848099007-108.59219886666103) -- (9,-10) node[below] {$S_2$};

\draw[color=darkgray,decoration={brace,raise=4pt},decorate,line width=0.5pt]
  (2,58) -- (11,58) node[midway,black,above=5pt] {$B$};
\draw [color=darkgray, mark= , style=densely dotted] (9,145.26820848099007-108.59219886666103) -- (11,145.26820848099007-108.59219886666103);  
\draw [color=darkgray, mark= , style=dashed] (11,58) -- (11,145.26820848099007-108.59219886666103);  

\draw[color=darkgray,decoration={brace,raise=4pt},decorate,line width=0.5pt]
  (3,70) -- (12,70) node[midway,black,above=5pt] {$B$};  
  
\draw[color=blue,decoration={brace,raise=4pt},decorate,line width=0.5pt]
  (-5,0) -- (-5,27) node[midway,blue,left=5pt] {$B$}; 

\draw [color=darkgray, mark= , style=dashed] (3,74.052255726385) -- (3,-15) node[below] {$S_3-B$};
\draw [color=darkgray, mark= , style=dashed] (5,53.32214130394205) -- (5,0) node[below] {$s_3$};
\draw [color=darkgray, mark= , style=dashed] (12,70) -- (12,-10) node[below] {$S_3$};

 \draw [color=violet, mark= , style=solid] plot coordinates 
 {%
(-5, 193.91434017060305-108.59219886666103)
(-4, 183.91434017060305-108.59219886666103)
(-3, 170.68466703477696-108.59219886666103)
(-2, 170.68466703477696-108.59219886666103)
(-1, 170.68466703477696-108.59219886666103)
(0, 167.26820848099007-108.59219886666103)
(1, 167.26820848099007-108.59219886666103)
(2, 167.26820848099007-108.59219886666103)
(3, 160.08454286044568-108.59219886666103)
(4, 160.08454286044568-108.59219886666103)
(5, 160.08454286044568-108.59219886666103)
 };
 
 \draw[color=violet] (-5, 135-50) node[left] {$C_n(y)$};

\draw [smooth, color=red, mark= , style=dashed] plot coordinates {%
(1, 223.46662297386672-108.59219886666103)
(2, 204.44077131748384-108.59219886666103)
(3, 182.51816172794713-108.59219886666103)
(4, 171.91434017060308-108.59219886666103)
};

\draw [smooth, color=red, mark= , style=solid] plot coordinates {%
(4, 171.91434017060308-108.59219886666103)
(5, 161.91434017060308-108.59219886666103)
(6, 148.68466703477696-108.59219886666103)
(7, 148.96468423815455-108.59219886666103)
(8, 149.96468423815455-108.59219886666103)
(9, 145.26820848099007-108.59219886666103)
(10, 145.96839396745509-108.59219886666103)
(11, 146.96839396745509-108.59219886666103)
(12, 138.08454286044568-108.59219886666103)
(13, 138.79526861325903-108.59219886666103)
(14, 140.66242274181988-108.59219886666103)
(15, 137.39182116333782-108.59219886666103)
(16, 138.74754731297128-108.59219886666103)
(17, 140.633224316909-108.59219886666103)
(18, 134.54388548492778-108.59219886666103)
(19, 135.82543222870783-108.59219886666103)
(20, 138.45775028344354-108.59219886666103)
(21, 136.5591523411612-108.59219886666103)
(22, 138.54170138827772-108.59219886666103)
(23, 141.21341449602068-108.59219886666103)
(24, 136.80154986863565-108.59219886666103)
(25, 138.5627223337551-108.59219886666103)
(26, 141.8662916120153-108.59219886666103)
(27, 141.852605124947-108.59219886666103)
(28, 144.56568232293165-108.59219886666103)
(29, 147.9473778012813-108.59219886666103)
(30, 145.08356294160836-108.59219886666103)
(31, 147.3335974326813-108.59219886666103)
(32, 151.23122953664023-108.59219886666103)
(33, 152.10128242001522-108.59219886666103)
(34, 155.35018774108292-108.59219886666103)
(35, 159.35624145422136-108.59219886666103)
(36, 158.56482242249913-108.59219886666103)
(37, 161.52000545032362-108.59219886666103)
(38, 165.97491458987665-108.59219886666103)
(39, 167.62961705894128-108.59219886666103)
(40, 171.36261768811954-108.59219886666103)
(41, 175.92293968440282-108.59219886666103)
(42, 176.17320398132165-108.59219886666103)
(43, 179.57200374544956-108.59219886666103)
(44, 184.4994984808252-108.59219886666103)
(45, 187.64461177440782-108.59219886666103)
(46, 192.10640173877488-108.59219886666103)
(47, 197.20435659489442-108.59219886666103)
(48, 198.33222408498267-108.59219886666103)
(49, 202.12429562604967-108.59219886666103)
} node[right] {$G_n(y)$};
 
\draw [color=blue, mark= , style=solid] plot coordinates {%
(-5,9*3)
(-4,9*3)
(-3,9*3)
(-2,8*3)
(-1,7*3)
(-1,9*3)
(0,9*3)
(1,8*3)
(2,7*3)
(2,9*3)
(3,9*3)
(4,8*3)
(5,7*3)
(5,0)
(6,0)
(7,0)
(8,0)
(9,0)
(10,0)
(11,0)
(12,0)
(13,0)
(14,0)
(15,0)
(16,0)
(17,0)
(18,0)
(19,0)
(20,0)
(21,0)
(22,0)
(23,0)
(24,0)
(25,0)
(26,0)
(27,0)
(28,0)
(29,0)
(30,0)
(31,0)
(32,0)
(33,0)
(34,0)
(35,0)
(36,0)
(37,0)
(38,0)
(39,0)
(40,0)
(41,0)
(42,0)
(43,0)
(44,0)
(45,0)
(46,0)
(47,0)
(48,0)
(49,0)} 
 node[above] at (-4.0,13.0) {$Q$};
 
\end{tikzpicture}
\caption{Optimal ordering policy for the stationary example in \citep{Chen1996X}; note that $G_n(y)$ and $Q$ are not plotted according to the same vertical scale.}
\label{fig:shaoxiang_policy}
\end{figure}
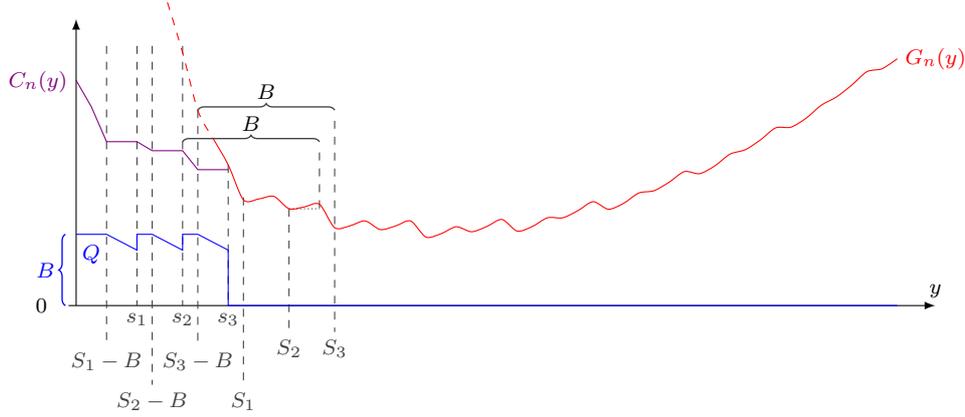

\section{Generating counterexamples to the continuous order property}\label{sec:appendix_6}

Generating counterexamples to the continuous order property is far from trivial. We believe this is the reason why the continuous order property originally conjectured by \cite{Gallego2000Capacitated} has not been so far confirmed or disproved. In this section, we outline the reasoning we followed to generate our counterexample. Our analysis was inspired by the work of \cite{Gallego2004}.

\begin{lemma}\label{lemma:karush1959}
Let $f$ be convex, and $S$ be a minimizer of $f$, then 
\[g(x)\triangleq \min_{y\in[x,x+B]} f(y)-f(x)=\left\{
\begin{array}{lr}
0		&\quad S\leq x\\
f(S)-f(x)		&\quad S-B\leq x\leq S\\
f(x+B)-f(x)		&\quad x\leq S-B
\end{array}\right.
\]
is nondecreasing.
\end{lemma}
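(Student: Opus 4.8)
The plan is to first reduce the inner minimization $\min_{y\in[x,x+B]} f(y)$ to a closed form by locating the minimizer $S$ of $f$ relative to the window $[x,x+B]$, thereby deriving the three-case formula, and then to verify that $g$ is nondecreasing one branch at a time, gluing the branches at the two breakpoints $x=S-B$ and $x=S$.

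First I would record the elementary structural consequence of convexity: since $S$ minimizes the convex function $f$, the function $f$ is nonincreasing on $(-\infty,S]$ and nondecreasing on $[S,\infty)$. With this, the three cases follow by inspecting whether $S$ lies to the right of, inside, or to the left of the window. If $S\le x$ the whole window lies in the nondecreasing branch, so the minimum is attained at $y=x$ and $g(x)=0$. If $S-B\le x\le S$ then $x\le S\le x+B$, so $S$ belongs to the window and the minimum is $f(S)$, giving $g(x)=f(S)-f(x)$. If $x\le S-B$ then $x+B\le S$ and the window lies in the nonincreasing branch, so the minimum is at $y=x+B$ and $g(x)=f(x+B)-f(x)$.

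Next I would establish monotonicity on each branch. On $\{x\ge S\}$ we have $g\equiv 0$. On $\{S-B\le x\le S\}$, $g(x)=f(S)-f(x)$ is nondecreasing because $f$ is nonincreasing there. The only substantive step is the branch $\{x\le S-B\}$, where $g(x)=f(x+B)-f(x)$: here I would invoke the standard fact that for convex $f$ and fixed $B>0$ the increment $x\mapsto f(x+B)-f(x)$ is nondecreasing. Concretely, for $x_1<x_2$ this is the inequality $f(x_2+B)-f(x_1+B)\ge f(x_2)-f(x_1)$, which is exactly the monotonicity of secant slopes of a convex function applied to the interval $[x_1,x_2]$ and its rightward shift $[x_1+B,x_2+B]$.

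Finally I would glue the branches by checking continuity at the two junctions: at $x=S$ the Case~2 expression $f(S)-f(x)$ and the Case~1 value $0$ agree, and at $x=S-B$ both the Case~3 expression $f(x+B)-f(x)$ and the Case~2 expression $f(S)-f(x)$ reduce to $f(S)-f(S-B)$. Since $g$ is nondecreasing on each of the three intervals and continuous across both junctions, it is nondecreasing on all of $\mathbb{R}$. I expect the only point requiring care to be the Case~3 step, and in particular phrasing the secant-slope argument cleanly without assuming differentiability of $f$; if one prefers, the smooth case reads $f(x+B)-f(x)=\int_x^{x+B} f'(t)\,dt$ with $f'$ nondecreasing, and the general case follows from the three-chord inequality for convex functions.
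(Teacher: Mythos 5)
Your proposal is correct and follows essentially the same route as the paper's proof: establish the three-case closed form from the unimodality of the convex $f$ around its minimizer $S$, then verify monotonicity branch by branch (constant for $x\geq S$, nondecreasing on $[S-B,S]$ since $f$ is nonincreasing there, and nondecreasing for $x\leq S-B$ by the standard increment-monotonicity $f(x+B)-f(x)$ of a convex function). Your explicit check that the branches agree at the junctions $x=S-B$ and $x=S$ is a small extra dose of rigor the paper leaves implicit, but it is not a different argument.
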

\begin{proof}
Following \citep{Karush1959}, $g(x)$ is constant for $S\leq x$; it is nondecreasing for $S-B\leq x\leq S$, since $f(S)$ is constant, and $f$ is nonincreasing in this region; finally, it is nondecreasing for $x\leq S-B$, since $f$ is convex and hence $f(x+B)-f(x)$ is nondecreasing for all $x$.
\end{proof}

Consider $G_n$ and $C_n$ as defined in Eq \eqref{eq:Cn} and Eq. \eqref{eq:Gn}, respectively, and let these functions be $(K,B)$-convex. To show that the continuous order property holds, one must show that $\{x|C_n(x)-(G_n(x)-vx)<0\}$ is the convex set $(-\infty,s_m)$. 

Recall that
\[
\begin{array}{l}
C_n(x)=\min\left\{
\begin{array}{l}
L_n(x)+\int_0^\infty C_{n-1}(x-\xi)f_n(\xi) \mathrm{d}\xi,\\
\min_{x< y \leq x+B} \{K+v(y-x)+L_n(y)+\int_0^\infty C_{n-1}(y-\xi)f_n(\xi) \mathrm{d}\xi\}
\end{array}
\right\},\\
G_n(x)=vx + L_n(x)+\int_0^\infty C_{n-1}(x-\xi)f_n(\xi) \mathrm{d}\xi,\\
C_n(x)=-vx + \min\{G_n(x),K + \min_{x\leq y \leq x+B} G_n(y)\}.
\end{array}
\]
To prove that $\{x|C_n(x)-(G_n(x)-vx)<0\}$ is a convex set, it is sufficient to show that the function 
\[V_n(x)\triangleq C_n(x)-(G_n(x)-vx)\] 
is nondecreasing in $x$ for each $n$. Let $[x]^-\triangleq \min\{0,x\}$, and note that 
\[V_n(x)=[K + \min_{x\leq y \leq x+B} G_n(y) - G_n(x)]^-.\]
One may want to try and show by induction that $V_n(x)$ is nondecreasing in $x$ for each $n$. Let $C_{0}\triangleq 0$, then
\[
\begin{array}{lll}
V_1(x)	& =	[K+\min_{x\leq y \leq x+B}  \{v(y-x) + L_1(y)\} - L_1(x)]^-;
\end{array}
\]
since the unit cost $v$ is linear, and $L_1$ is convex, from Lemma \ref{lemma:karush1959} it follows that $V_1(x)$ is nondecreasing. 
Given this base case, we may then assume that $V_n(x)$ is nondecreasing in $x$, and try to show that $V_{n+1}(x)$ is nondecreasing in $x$. 

First, observe that
\[
\begin{array}{lll}
V_{n+1}(x)	& = [K+\min_{x\leq y \leq x+B} 	& (vy + L_{n+1}(y) + \int_0^\infty C_{n}(y-\xi)f_{n+1}(\xi) \mathrm{d}\xi) \\
			& 						&-(vx + L_{n+1}(x) + \int_0^\infty C_{n}(x-\xi)f_{n+1}(\xi) \mathrm{d}\xi)]^-.
\end{array}
\]
To try and prove $V_{n+1}(x)$ is nondecreasing, we shall first analyse
\[
\begin{array}{ll}
& K +\min_{x\leq y \leq x+B} v(y-x) + C_{n}(y)-C_{n}(x)  \\
& = \min_{x\leq y \leq x+B} \{K + G_n(y) - G_n(x) - V_n(x) + V_n(y)\},
\end{array}
\]
since $C_n(x)=V_n(x)+G_n(x)-vx$. Consider $s_m$ as defined in Lemma \ref{lemma:s_m}, and recall this value denotes an inventory level beyond which no ordering is optimal.
There are three intervals we need to analyse: $x\leq s_m-B$, $s_m-B< x\leq s_m$, and $x> s_m$. Observe that, from the definition of $s_m$ in Lemma \ref{lemma:s_m}, if $x=s_m$, then $K + \min_{x\leq y \leq x+B} G_n(y) - G_n(x)\leq 0$; moreover, by induction hypothesis $V_n(x)$ is assumed nondecreasing, hence $V_n(x)=K + \min_{x\leq y \leq x+B} G_n(y) - G_n(x)$ for $x\leq s_m$.

Let $x\leq s_m-B$; in this interval $V_n(x)=K + \min_{x\leq y \leq x+B} G_n(y) - G_n(x)$, thus
\begin{align}
& {\textstyle\min_{x\leq y \leq x+B} \{K + G_n(y) - G_n(x) - V_n(x) + V_n(y)\}}\nonumber\\[-5pt]
& {\textstyle= \min_{x\leq y \leq x+B} \{K - (\min_{x\leq z \leq x+B} G_n(z)) + (\min_{y\leq w \leq y+B} G_n(w))\}}\nonumber\\[-5pt]	%
& {\textstyle= \min_{x\leq y \leq x+B} \{K - G_n(x+B) + \min_{y\leq w \leq y+B} G_n(w)\}}\nonumber\\[-5pt]						
& {\textstyle= K + \min_{x\leq y \leq x+2B} G_n(y) - G_n(x+B),}\nonumber\\[-5pt]										
& {\textstyle= K + \min_{x+B\leq y \leq x+2B} G_n(y) - G_n(x+B),}\label{region_1}										
\end{align}
because $G_n(x)$ is assumed $(K,B)$-convex and, by Lemma \ref{lemma:decreasing}, it is nonincreasing for $x\leq s_m$, therefore it is also nonincreasing in $(x, x+B)$, since $x\leq s_m-B$.

Let $s_m-B< x\leq s_m$, in this interval $V_n(x)=K + \min_{x\leq z \leq x+B} G_n(z) - G_n(x)$, thus
\begin{align}
&{\textstyle\min_{x\leq y \leq x+B} \{K + G_n(y) - G_n(x) - V_n(x) + V_n(y)\}}\nonumber\\[-5pt]
&{\textstyle = \min_{x\leq y \leq x+B} \{G_n(y) + V_n(y)\} - \min_{x\leq z \leq x+B} G_n(z)}\nonumber\\[-5pt]					%
&{\textstyle = \min_{x\leq y \leq x+B} \{C_n(y) + vy\} - \min_{x\leq z \leq x+B} G_n(z)}\nonumber\\[-5pt]						
&{\textstyle = \min_{s_m< y \leq x+B} \{C_n(y) + vy\} - \min_{s_m< z \leq x+B} G_n(z)=0,}\label{region_2}						
\end{align}
because $G_n(x)$ and $C_n(x)$ are assumed $(K,B)$-convex and, by Lemma \ref{lemma:decreasing}, they are nonincreasing for $x\leq s_m$; and since no ordering is optimal beyond $s_m$, then $\min_{s_m< y \leq x+B} C_n(y)+vy=\min_{s_m< z \leq x+B} G_n(z)$. 

Let $x> s_m$, in this interval $K + \min_{x\leq y \leq x+B} G_n(y) - G_n(x)> 0$, hence $V_n(x)=0$, $V_n(y)=0$, and
\begin{align}
&{\textstyle \min_{x\leq y \leq x+B} \{K + G_n(y) - G_n(x) - V_n(x) + V_n(y)\}}\nonumber\\[-5pt]
&{\textstyle = K + \min_{x\leq y \leq x+B} G_n(y) - G_n(x)> 0.}\label{region_3}
\end{align}

Equipped with Eq. \eqref{region_1}, \eqref{region_2}, and \eqref{region_3} for the intervals we considered, it is immediate to see that 
\[\left[ K +\min_{x\leq y \leq x+B} v(y-x) + C_{n}(y)-C_{n}(x) \right]^-=\left\{
\begin{array}{lr}
V_n(x+B)							&\quad x\leq s_m-B\\
0								&\quad s_m-B< x\leq s_m\\
0								&\quad x> s_m
\end{array}\right.
\] 
is nondecreasing. However, it is not possible to determine if $[K+\min_{x\leq y \leq x+B} v(y-x)+\int_0^\infty (C_{n}(y-\xi)-C_{n}(x-\xi))f_{n+1}(\xi) \mathrm{d}\xi]^-$ is nondecreasing; and reintroducing term $\min_{x\leq y \leq x+B} L_{n+1}(y) - L_{n+1}(x)$ only worsens the matter. But because of the behavior of $\left[ K +\min_{x\leq y \leq x+B} v(y-x) + C_{n}(y)-C_{n}(x) \right]^-$ in intervals $s_m-B< x\leq s_m$ and $x\leq s_m-B$, one may observe that a $V_{n+1}(x)$ function featuring some decreasing regions may be produced by the convolution $\int_0^\infty (C_{n}(y-\xi)-C_{n}(x-\xi))f_{n+1}(\xi) \mathrm{d}\xi$, provided demand is sufficiently ``lumpy.'' In other words, the instance must feature demand whose probability mass function features some values larger than $B$ possessing non negligible probability mass. A demand that is so structured may ensure that the convolution ``bends'' sufficiently $V_{n+1}(x)$ beyond $s_m$ so that it turns negative.

On the basis of this observation, we have generated several random instances as follows. The fixed ordering cost is a randomly generated value uniformly distributed between 1 and 500; holding cost is 1; penalty cost is a randomly generated value uniformly distributed between 1 and 30; the ordering capacity is a randomly generated value uniformly distributed between 20 and 200; demand distribution in each period is obtained as follows: the probability mass function comprises only four values in the support, one of these values must fall below the given order capacity, the other three values must fall above, and be smaller or equal to 300; probability masses are then allocated uniformly to each of these values. 
The Java code to generate instances that violate the continuous order property is available on \url{http://gwr3n.github.io/jsdp/}.\footnote{File \url{https://github.com/gwr3n/jsdp/blob/master/jsdp/src/main/java/jsdp/app/standalone/stochastic/capacitated/CapacitatedStochasticLotSizingFast.java}} 

\section{Expected demand values in our test bed}\label{sec:appendix_3}

Expected demand values for demand patterns in our test bed are shown in Table \ref{table:demand_data}.

\begin{filecontents*}[overwrite]{demandData.csv}
STA,30 ,30 ,30 ,30 ,30 ,30 ,30 ,30 ,30 ,30 ,30 ,30 ,30 ,30 ,30 ,30 ,30 ,30 ,30 ,30 

LC1,46 ,49 ,50 ,50 ,49 ,46 ,42 ,38 ,35 ,33 ,30 ,28 ,26 ,23 ,21 ,18 ,14 ,11 ,8 ,6 

LC2,7 ,9 ,11 ,13 ,17 ,22 ,24 ,26 ,32 ,34 ,36 ,41 ,44 ,47 ,48 ,50 ,50 ,49 ,47 ,44 

SIN1,47 ,30 ,13 ,6 ,13 ,30 ,47 ,54 ,47 ,30 ,13 ,6 ,13 ,30 ,47 ,30 ,15 ,8 ,11 ,30 

SIN2,36 ,30 ,24 ,21 ,24 ,30 ,36 ,39 ,36 ,30 ,24 ,21 ,24 ,30 ,36 ,31 ,24 ,21 ,26 ,33 

RAND,63 ,27 ,10 ,24 ,1 ,23 ,33 ,35 ,67 ,7 ,14 ,41 ,4 ,63 ,26 ,45 ,53 ,25 ,10 ,50 

EMP1,5 ,15 ,46 ,140 ,80 ,147 ,134 ,74 ,84 ,109 ,47 ,88 ,66 ,28 ,32 ,89 ,162 ,36 ,32 ,50

EMP2,14 ,24 ,71 ,118 ,49 ,86 ,152 ,117 ,226 ,208 ,78 ,59 ,96 ,33 ,57 ,116 ,18 ,135 ,128 ,180 

EMP3,13 ,35 ,79 ,43 ,44 ,59 ,22 ,55 ,61 ,34 ,50 ,95 ,36 ,145 ,160 ,104 ,151 ,86 ,123 ,64 

EMP4,15 ,56 ,19 ,84 ,136 ,67 ,67 ,155 ,87 ,164 ,194 ,67 ,65 ,132 ,35 ,131 ,133 ,36 ,173 ,152 
\end{filecontents*}

\begin{table}
\resizebox{\textwidth}{!}{
\scriptsize
\csvautobooktabular[table head=\toprule Pattern & \multicolumn{19}{c}{Expected demand values}\\\midrule\csvlinetotablerow\\, table foot=\\\midrule Period&1&2&3&4&5&6&7&8&9&10&11&12&13&14&15&16&17&18&19&20\\\bottomrule]{demandData.csv}
}
\caption{Expected demand values for demand patterns in our test bed.}
\label{table:demand_data}
\end{table}

\end{appendices}

\bibliographystyle{plainnat}
\bibliography{bibliography}
\end{document}